\theoremstyle{plain}
\newtheorem{theorem}{Theorem}[section]
\newtheorem{proposition}[theorem]{Proposition}
\newtheorem{lemma}[theorem]{Lemma}
\newtheorem{remark}[theorem]{Remark}
\newtheorem{definition}[theorem]{Definition}
\newcommand{\R}{\mathbb{R}}
\newcommand{\C}{\mathbb{C}}
\newcommand{\N}{\mathbb{N}}
\newcommand{\X}{\mathcal{X}}
\newcommand{\E}{\mathcal{E}}
\newcommand{\CC}{\mathcal{C}}
\newcommand{\B}{\mathcal{B}}
\renewcommand{\bf}{\bar{f}}
\newcommand{\bh}{\bar{h}}
\newcommand{\tf}{\tilde{f}}
\newcommand{\tg}{\tilde{g}}
\renewcommand{\th}{\tilde{h}}
\newcommand{\ba}{\bar{\alpha}}
\newcommand{\ta}{\tilde{\alpha}}
\newcommand{\even}{\mathrm{even}}
\newcommand{\odd}{\mathrm{odd}}
\newcommand{\bu}{\bar{u}}
\newcommand{\bl}{\bar{\lambda}}
\newcommand{\tR}{\tilde{R}}
\newcommand{\ds}{\displaystyle}
\NewDocumentCommand{\longformula}{m}
{
	\tl_set:Nn \l_tmpa_tl { #1 }
	\regex_replace_all:nnN { ([0-9]{4})([0-9]{4}) } { \1 \c{__youthdoo_a:} \2 } \l_tmpa_tl
	\regex_replace_all:nnN { ([0-9])([0-9]) } { \1 \c{__youthdoo_b:} \2 } \l_tmpa_tl
	\regex_replace_all:nnN { ([0-9])([0-9]) } { \1 \c{__youthdoo_b:} \2 } \l_tmpa_tl
	\regex_replace_all:nnN { \)(\^[0-9])? } { \0\c{allowbreak} } \l_tmpa_tl
	\tl_use:N \l_tmpa_tl
}
\def\restriction#1#2{\mathchoice
	{\setbox1\hbox{${\displaystyle #1}_{\scriptstyle #2}$}
		\restrictionaux{#1}{#2}}
	{\setbox1\hbox{${\textstyle #1}_{\scriptstyle #2}$}
		\restrictionaux{#1}{#2}}
	{\setbox1\hbox{${\scriptstyle #1}_{\scriptscriptstyle #2}$}
		\restrictionaux{#1}{#2}}
	{\setbox1\hbox{${\scriptscriptstyle #1}_{\scriptscriptstyle #2}$}
		\restrictionaux{#1}{#2}}}
\def\restrictionaux#1#2{{#1\,\smash{\vrule height .8\ht1 depth .85\dp1}}_{\,#2}}
\title{
Validated enclosure of renormalization fixed points \\
via Chebyshev series and the DFT
%Computation of rigorous computer-assisted bounds of fixed points for renormalization of arbitrary period, eigenfunctions, and %universal constants
}
\author{Maxime Breden %\thanks{...}
\footnote{CMAP, CNRS, \'Ecole polytechnique, Institut Polytechnique de
		Paris, 91120 Palaiseau, France. \texttt{maxime.breden@polytechnique.edu}} $\quad$  
		Jorge Gonzalez 
		\thanks{JG was partialy supported by NSF grant DMS - 2001758}
		\footnote{Charles E. Schmidt College of Science,
		Florida Atlantic University, 777 Glades Rd., Boca Raton, FL 33431, USA
		\texttt{jorgegonzale2013@fau.edu}} 
		$\quad$ J.D. Mireles James 
		 \thanks{J.D.M.J partially supported by NSF grant DMS - 2307987.}
		\footnote{Department of Mathematics and Statistics,
		Florida Atlantic University, 777 Glades Rd., Boca Raton, FL 33431, USA
		\texttt{jmirelesjames@fau.edu}}}
\date{\today}
\begin{document}

	\maketitle
	
	\begin{abstract}
This work develops a computational 
framework for proving existence, uniqueness, isolation, and 
stability results for degree $d$, real analytic, unimodal functions
fixed by $m$-th order Feigenbaum-Cvitanovi\'{c}
renormalization operators.
Here the order $m$ of the operator 
refers to the number of function compositions involved 
in its definition.  The degree $d$ of the fixed function is the number of 
derivatives vanishing at its (unique) critical point.  
Our approach builds on the earlier work of 
Lanford, Eckman, Wittwer, Koch, Burbanks, Osbaldestin, and
Thurlby \cite{iii1982computer,eckmann1987complete,MR0727816,
burbanks2021rigorous2,burbanks2021rigorous1}, with the 
point of departure being that we discretize the domain of the renormalization operators 
using Chebyshev rather than Taylor series. The advantage of Chebyshev series is that 
they are naturally adapted 
to spaces of real analytic functions, in the sense that they converge on ellipses
containing real intervals rather than on disks in $\mathbb{C}$.  
This facilitates the development of a functional analytic approach 
independent of order and degree. 

The main disadvantage of working with Chebyshev series  
is that the operations of rescaling and composition, 
essential to the definition of the renormalization operators, 
are less straight forward for Chebysehv than for Taylor series.
These difficulties are overcome via a combination of a-priori 
information about decay rates in Banach spaces of rapidly decaying 
coefficient sequences, with a-posteriori 
estimates on Chebyshev interpolation errors for analytic functions.

Our arguments are implemented in the Julia programming language 
and exploit extended precision floating point interval arithmetic. 
The method is used to prove the existence of degree $2$ 
renormalization fixed points of order $m = 2, \ldots, 10$, 
and to obtain validated bounds on the values of their the universal constants. 
We also illustrate that the method works for higher degrees by 
proving the existence of degre $4$ fixed points with $m = 2, 3$. 
To stress the utility of the arbitrary precision implementation, we 
reprove the existence of the classical $m=d=2$ 
Feigenbaum fixed point and compute its universal constants to 500 correct decimal digits. The code for the project is available at \cite{renorCode}.  	
\end{abstract}

\section{Introduction} \label{sec:intro}
The present work is concerned with real analytic, 
unimodal functions of a single variable which 
solve certain functional equations of Feigenbaum-Cvitanovi\'{c}
type.  By unimodal we mean a mapping which 
takes a closed interval $I = [a, b]$ into itself and which, (I) has a single 
interior critical point $c \in (a,b)$ and (II) 
is strictly increasing on $[a, c]$ and strictly decreasing on $[c, b]$.
There is no loss of generality in taking $I = [-1,1]$ and 
$c = 0$.  Indeed, this simplifies the discussion by   recentering it on even functions and we assume this is the case 
moving forward.

Note that a real analytic function on $I$ can be analytically continued 
to an open set in $\mathbb{C}$ containing $I$, and that any such open set   
contains an ellipse of the following form.  
\begin{definition} \label{def:elippse} 
For $\rho > 1$, define the closed Bernstein ellipse 
$\E_\rho \subset \mathbb{C}$ of radius $\rho$ by
\begin{equation*}
			\E_\rho=\left\{ \frac{1}{2}\left(z+z^{-1}\right),
			\ z\in\C,\ 1\leq \vert z\vert \leq \rho \right\}.
\end{equation*}
\end{definition}

\noindent This motivates the following definition.

\begin{definition} \label{def:functionSpace}
For $\rho > 1$, let  $X_\rho$ denote the set of all 
$f \colon \mathcal{E}_\rho \to \mathbb{C}$
with the following properties.
\begin{itemize}
\item \textbf{Analyticity:} $f$ is analytic on the interior of $\mathcal{E}_\rho$,
\item \textbf{Continuity to the boundary:} $f$ is bounded and continuous 
on the closed ellipse $\mathcal{E}_\rho$,
\item \textbf{Reality: } $f(x) \in \mathbb{R}$ when $x \in \mathcal{E}_\rho \cap \mathbb{R}$,
\item \textbf{Evenness: } $f(x) = f(-x)$ for $x \in \mathcal{E}_\rho \cap \mathbb{R}$.
\end{itemize}
\end{definition}
\noindent Denoting by $\left\Vert \cdot\right\Vert_{\CC^0_\rho}$ the supremum norm on $\E_\rho$, i.e.,
\[
\| f \|_{\CC^0_\rho} = \sup_{z \in \mathcal{E}_\rho} | f(z)|,
\]
we have that $\left(X_\rho,\ \left\Vert \cdot\right\Vert_{\CC^0_\rho}\right)$ is a Banach space.

\begin{definition} \label{def:FixedPoint}
Let $\rho > 1$ and 
\[
f^m = \underbrace{f\circ\ldots\circ f}_{m \text{ times}},
\] 
denote the composition of $f$ with itself $m$ times.
We refer to 
\begin{equation} \label{eq:renormDef}
\frac{1}{f^m(0)} f^m(f^m(0) z) = f(z), \quad \quad \quad z \in \mathcal{E}_\rho, f \in X_\rho, 
\end{equation}	
as the (symmetric) $m$-th order Feigenbaum-Cvitanovi\'{c} equation.
\end{definition}

We use the notation $R_m(f)$ as shorthand for  
the left hand side of Equation \eqref{eq:renormDef}, and 
refer to $R_m$ as an $m$-th order renormalization operator (for
even unimodal maps).
Note that, if $f$ solves Equation \eqref{eq:renormDef}, then  
\begin{equation} \label{eq:normalization}
f(0) = \frac{1}{f^m(0)} f^m(f^m(0) 0) =  \frac{1}{f^m(0)} f^m(0) = 1,
\end{equation}
and we see that this form of the fixed point equation
imposes the standard normalization
that the function achieve a maximum value of $1$.

\bigskip

\begin{remark}[Additional terminology] 
\label{rem:FP_properties}
We refer to a solution $f \in X_\rho$ of 
 Equation \eqref{eq:renormDef} as a renormalization fixed point, 
 and distinguish such fixed points as follows.

\begin{itemize}
\item \textbf{Order of the solution:} we refer to  
$m \geq 2$ as the order of the renormalization fixed point $f$,
when $m$ is the number of compositions in the 
definition of Equation \eqref{eq:renormDef}.
\item \textbf{Degree of the solution:} 
Consider $f \in X_\rho$ with $f(0) = 1$.  
Since $f$ is even and analytic, it has Taylor 
series expansion centered at $x= 0$ of the form 
\begin{equation} \label{eq:degree}
f(z) = 1 + a_2 z^2 + a_4 z^4 + a_6 z^6 + \ldots,
\end{equation}
convergent on some disk $|z| < R$.   
The even number $d \geq 2$ denotes the first index such that  
$a_{d} \neq 0$.  We refer to this even number 
as the degree of $f$, and it measures the ``flatness'' of $f$ at $x = 0$.
\item \textbf{Kneading sequence:} this is a partial 
invariant for unimodal maps, introduced by Thurston and Milnor \cite{MR970571}.
The definition of the kneading sequence is somewhat technical, 
but it involves tracking the 
behavior of the orbit of the critical point $x = 0$. More precisely, one constructs a 
sequence of symbols $L = \mbox{``left''}$, $R = \mbox{``right''}$ and $C = 
\mbox{``center''}$ where  the $j$-th term in the sequence
records whether the $j$-th iterate of $x = 0$ is to the left of 
zero, to the right of zero, or zero itself (the ``center point'' of $I$).  
Such a sequence may be periodic, eventually periodic, or 
a-periodic.  The kneading sequence is a partial invariant in the sense
 that unimodal maps with different 
kneading sequences are not topologically conjugate.  
\item \textbf{ Schwarzian derivative:}
We refer to 
\begin{equation} \label{eq:Schwartz}
(Sf)(x) = \frac{f'''(x)}{f'(x)} - \frac{3}{2} \left( \frac{f''(x)}{f'(x)}\right)^2, 
\end{equation}
as the Schwarzian derivative of $f$ on $[-1,1]$, and say that 
$f$ has negative Schwarzian derivative if   
$(Sf)(x) < 0$ on $[-1,1]\setminus\{0\}$.
The significance of this is that a unimodal map $f$ with 
negative Schwarzian derivative 
has at most one attracting periodic orbit. The combinatorics of 
this periodic orbit are closely related to the kneading sequence.  
\end{itemize}
\end{remark}

\bigskip

The family of polynomials 
\begin{equation}\label{eq:quadFamily}
q_{d, \mu}(x)  = 1 - \mu x^{d},   \quad \quad \quad (d \mbox{ even}),
\end{equation}
provide a standard model for even, degree $d$, unimodal maps. 
We refer to $q_\mu(x) = q_{2,\mu} = 1 - \mu x^2$ 
as the standard quadratic family, and to $q_{4, \mu}(x) = 1 - \mu x^4$ 
as the standard quartic family.

Note also that, while we are interested in solutions of Equation 
\eqref{eq:renormDef} with negative Schwarzian derivative,  
we will check this condition a-posteriori rather than imposing 
it a-priori in the function space.  
A related remark is that the 
notion of kneading sequence plays little role in the present work, 
other than as a heuristic guide for locating 
approximate renormalization fixed points of various orders.
However we do compute (at least numerically) the kneading sequence
associated with each fixed point whose existence is proven here, 
if only to illustrate the (well known) fact that distinct fixed points of the same
order have distinct kneading sequences.
(Indeed, in the degree $2$ case there is exactly one renormalization fixed point 
of order $m$ for each  ''minimal'' kneading sequence of length $m-1$, though 
we do not try to exhaust these in the present work).

\bigskip

Given a renormalization fixed point $f \in X_\rho$,
we are also interested in the unstable spectrum of the linearized equation at $f$.
It is well known that 
the normalizations imposed above (evenness of $f$ and $f(0)=1$) impose that the 
unstable spectrum consists of a single unstable eigenvalue.  
More precisely,  let $D R_m(f)$ denote Fr\'{e}chet derivative of $R_m$ 
at $f$.  We seek $\xi \in X_\rho$ with $\|\xi\|_0 \neq 0$
and a complex 
number $\lambda \in \mathbb{C}$ so that 
\[
DR_m(f) \xi = \lambda \xi, 
\]
and $|\lambda| > 1$.  As another bit of introductory notation, we write 
\[
\alpha = f^m(0).
\]
The constants $\lambda$ (unstable eigenvalue) 
and $\alpha$ (or its negative reciprocal $-1/\alpha$) are
referred to as the universal scalings or \textit{Feigenbaum constants}
associated with a renormalization fixed point $f$.
The dynamical meaning of the universal constants 
is reviewed briefly in Section \ref{sec:background}.

\bigskip

\begin{figure}[t]
\begin{center}
\includegraphics[height=12.25cm]{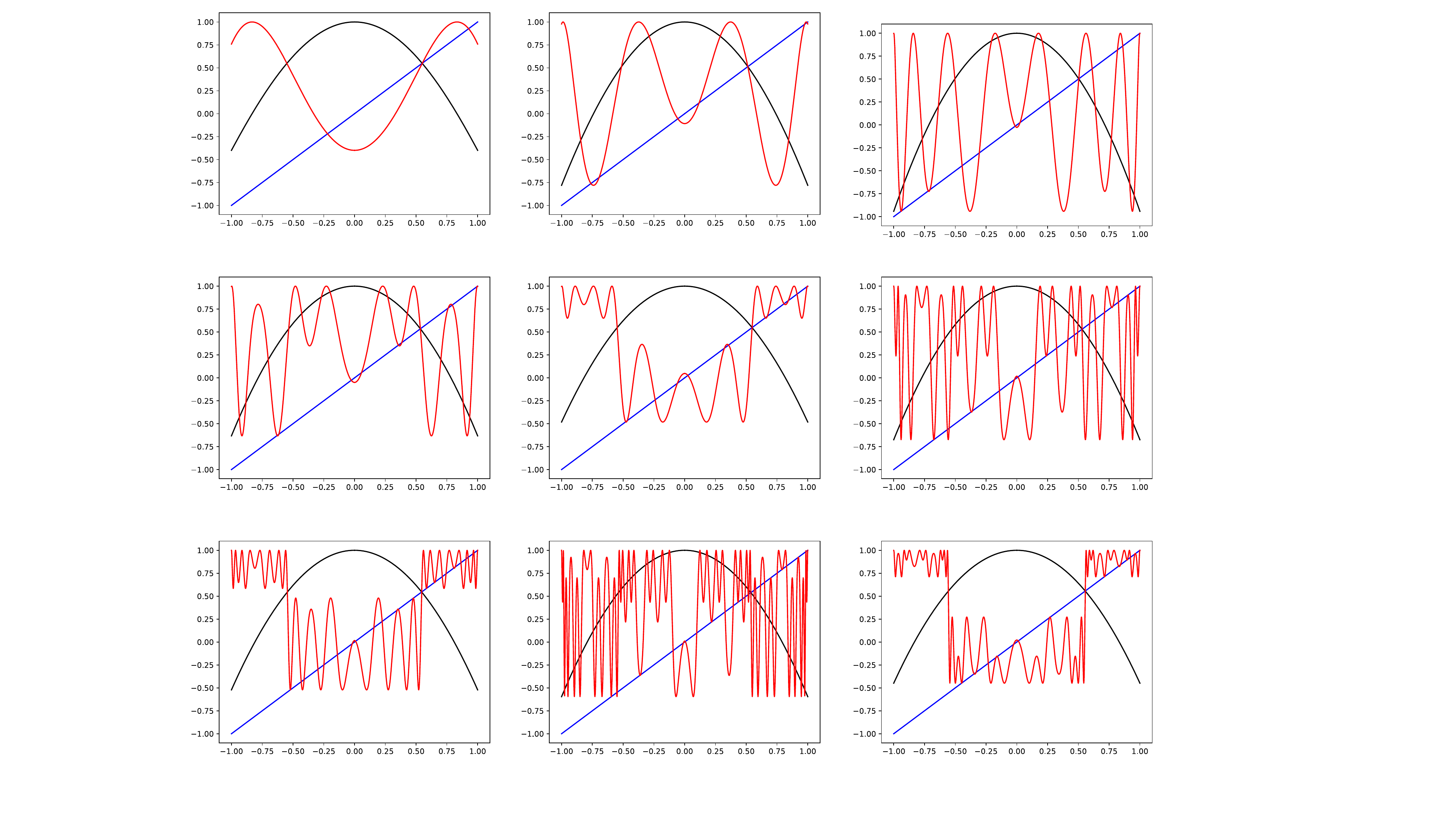}
\end{center}
\caption{\textbf{Some degree two solutions of equation \eqref{eq:renormDef} with order $m = 2, \ldots 10$:} 
The sub-figures are arranged in increasing order,
with an $m= 2$ fixed point depicted in the top left, and an $m=10$ fixed point depicted
in the bottom right. In each frame the black 
line is the graph of the approximate fixed point $\bar{f}_m$, described in Theorem~\ref{thm:main}. 
The red curve illustrates the composition of $\bar{f}_m$ with itself 
$m$ times (but with no rescaling, so the result is not fixed).
Each of these fixed points has degree two (quadratic maximum). 
The blue line denotes the fixed point line $y=x$, and we note that fixed points of the 
$m$-th composition (red curve) denote period $m$ orbits of $\bar{f}_m$.}
\label{fig:theFixedPoints}
\end{figure}

The following theorem illustrates the utility of the constructive 
methods developed in the main body of the present work.
These results, and others, are discussed in greater detail in  
Section \ref{sec:results}.
 
\begin{theorem}[Existence of renormalization fixed points through order ten]\label{thm:main}
Let $\bar{f}_m$, $m = 2, \ldots, 10$ denote the polynomials 
whose graphs are illustrated in Figure \ref{fig:theFixedPoints} and 
whose coefficients in the Chebyshev basis are given in the folder \texttt{renor\_code\_submit}.

There exist real numbers $0 < \epsilon_2, \ldots, \epsilon_{10} \ll 1$, and 
degree $d=2$ functions $f_2, \ldots, f_{10} \in X_{\rho}$, with $\rho=2$,
having that 
 \[
 \sup_{z \in \mathcal{E}_{\rho} } \left| 
 f_m(z) - \bar{f}_m(z)
 \right| \leq \epsilon_m,
 \]
 such that $f_m$ is an $m$-th order solution of Equation \eqref{eq:renormDef}.
 Each renormalization fixed point $f_m$ 
 is locally unique.
 %, and each $\rho_m$ is at least $2$.
Interval enclosures of the associated universal constants 
$\lambda_m$ and $\alpha_m$,
the degree of each polynomial approximation $\bf_m$, 
and upper bounds for the $\epsilon_m$, $2 \leq m \leq 10$,
are provided in Table \ref{table:1}.
\end{theorem}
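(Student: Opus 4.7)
The plan is to recast the fixed point problem $R_m(f)=f$ as the zero-finding problem
\begin{equation*}
F_m(f) \;:=\; R_m(f) - f \;=\; 0
\end{equation*}
in $X_\rho$ and, for each $m\in\{2,\ldots,10\}$, certify a true zero near the numerical approximation $\bar{f}_m$ by a Newton--Kantorovich / radii-polynomial style contraction argument. To get quantitative control of coefficients, I would first represent $X_\rho$ inside a weighted sequence space of Chebyshev coefficients: if $f(z)=\sum_{k\ge 0} c_k T_k(z)$, then membership in $X_\rho$ corresponds to geometric decay $|c_k|\lesssim \rho^{-k}$, and a working norm of the form $\|f\|_\nu=\sum_k |c_k|\,\nu^k$ with $1<\nu\le\rho$ dominates $\|f\|_{\CC^0_\rho}$ while making truncations, products and tail estimates explicit and interval-friendly.

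With the space in hand, I would build a bounded linear approximate inverse $A_m$ of the Fr\'echet derivative $DF_m(\bar{f}_m)$ by numerically inverting a finite Chebyshev truncation and extending by the identity on the tail. Defining the Newton-like operator
\begin{equation*}
T_m(f) \;=\; f - A_m F_m(f),
\end{equation*}
fixed points of $T_m$ coincide with zeros of $F_m$ provided $A_m$ is injective. I would then establish the standard radii-polynomial bounds
\begin{equation*}
\|T_m(\bar{f}_m)-\bar{f}_m\|_{\CC^0_\rho} \le Y_m, \qquad \sup_{f \in B(\bar{f}_m,r)} \|DT_m(f)\|_{\CC^0_\rho} \le Z_0^m + Z_1^m + Z_2^m(r),
\end{equation*}
all four constants being evaluated rigorously with extended-precision interval arithmetic. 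Any $r=\epsilon_m$ satisfying $Y_m + (Z_0^m+Z_1^m+Z_2^m(\epsilon_m))\epsilon_m \le \epsilon_m$ then furnishes, via the Banach contraction principle, a locally unique fixed point $f_m$ of $T_m$ in the closed $\epsilon_m$-ball around $\bar{f}_m$. The degree $d=2$ assertion is automatic since $\bar{f}_m$ is quadratic at the origin and the correction has size $\epsilon_m$. The universal constants are recovered from $\alpha_m = f_m^m(0)$ by direct interval evaluation, and $\lambda_m$ by applying the same Newton--Kantorovich machinery to the augmented eigenpair system $\bigl(DR_m(f_m)-\lambda I\bigr)\xi = 0$ together with a normalization $\langle \xi, \xi_0\rangle = 1$.

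The main obstacle, and indeed the technical heart of the paper, is the rigorous evaluation of $R_m$ and its derivative on Chebyshev series, which feeds into every one of $Y_m$, $Z_0^m$, $Z_1^m$ and $Z_2^m$. In contrast to the Taylor setting, neither the rescaling $z\mapsto \alpha z$ nor the composition $f\mapsto f\circ g$ acts in a simple way on Chebyshev coefficients, so both must be computed indirectly: the strategy flagged in the abstract is to split each operation into a polynomial part produced by Chebyshev interpolation at a sufficiently fine node set, and a tail controlled a priori by the geometric decay of the Chebyshev coefficients of analytic functions, combined with classical a-posteriori Chebyshev interpolation error estimates on $\E_\nu$. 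The hardest cases will be the large-$m$ fixed points: the $m$-fold composition inflates both the interpolation degree needed and the operator norm of $DR_m(\bar{f}_m)$ (since $|\lambda_m|$ grows with $m$), so the $Z$-bounds become tight and certifying the radii-polynomial inequality requires a careful balance between truncation degree, decay rate $\nu$ and floating-point precision. Once these ingredients deliver explicit interval formulas for the four constants, Theorem \ref{thm:main} is obtained by running the validated computation on each $m\in\{2,\ldots,10\}$ in turn.
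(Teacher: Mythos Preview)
Your outline is correct in spirit and identifies the right technical core (Newton--Kantorovich in a weighted $\ell^1$ Chebyshev space, with rescaling and composition handled by DFT-based interpolation plus a-priori decay estimates). Two implementation choices in the paper differ from yours and are worth noting. First, the paper does not attack $F_m(f)=R_m(f)-f$ directly: it unwraps one composition by introducing a scalar unknown $\alpha$ together with the phase condition $h(0)=1$, and looks for zeros of
\[
\Phi_m(\alpha,h)=\bigl(h(0)-1,\ \alpha h - h^m(\alpha\,\cdot)\bigr)
\]
in $\mathbb{R}\times\ell^{1,\even}_\rho$. This removes the $1/f^m(0)$ prefactor and its contribution to the Fr\'echet derivative, makes the natural tail approximate inverse $\frac{1}{\bar\alpha}(I-\Pi^K)$ rather than $\pm I$, and yields the enclosure of $\alpha_m$ directly as a component of the validated zero instead of via a-posteriori interval evaluation of $f_m^m(0)$ through $m$ compositions. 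Second, for the fixed-point problem the paper deliberately avoids the $Z_0+Z_1+Z_2(r)$ split you propose: it fixes a very small $r^*$ in advance and bounds $\sup_{\B((\bar\alpha,\bar h),r^*)}\Vert I-A\,D\Phi_m\Vert$ in one shot with interval arithmetic (splitting spatially via $\Pi^K$ and $\Pi^\infty$), precisely so as not to differentiate the $m$-fold composition a second time; the $Z_1+Z_2 r$ decomposition is reserved for the eigenpair problem, which is merely quadratic. Apart from these choices, your plan and the paper's coincide.
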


Note that $C^0$ bounds on the ellipse $\mathcal{E}_{\rho}$ automatically yield 
error bounds on the real interval $[-1,1]$, and that
the values of  the $\epsilon_m$ obtained in the computer assisted proofs 
are actually slightly sharper than those reported in Table \ref{table:1}. 
See Section \ref{sec:results} for more details.  
Using the same $\rho = 2$ for all the renormalization fixed points of Theorem~\ref{thm:main} is convenient, but we emphasize that this  only provides a lower bound on the domain of analyticity for these renormalization fixed points, which is not necessarily sharp, and that smaller values of $\rho$ are required for other renormalization fixed points.

\begin{table}[t]
	\begin{center}
		\begin{tabular}{|c| c | c| c| c | c |} 
			\hline
			$m$ & $K_m$ & $\epsilon_m$  & $\alpha_m$ & $\lambda_m$  \\ 
			\hline\hline
			2 & 21 & $8.233\times 10^{-18}$ &
			-0.39953528052313448${\footnotesize\frac{0}{9}}$ & 
			4.6692016091029906$\frac{0}{9}$  \\ 
			\hline
			3 & 15 & $2.191 \times 10^{-19}$ &-0.107789504292550755$\frac{0}{9}$  
			& 55.247026588671997372$\frac{0}{9}$   \\
			\hline
			4 & 15 &  $1.511 \times 10^{-19}$ & -0.025760531854625116 $\frac{0}{9}$  
			& 981.594976534071427646 $\frac{0}{9}$  \\
			\hline
			5 & 15 &  $1.868 \times 10^{-19}$ &-0.049681005072783868$\frac{0}{9}$  
			& 255.545253865903316209$\frac{0}{9}$  \\
			\hline
			6 & 15 &  $1.951\times 10^{-19}$ & 0.047781479795169254 $\frac{0}{9}$  
			& 218.41179514049496309$\frac{0}{9}$   \\
			\hline
			7 & 15 &  $1.314 \times 10^{-19}$ & 0.017056946896285930$\frac{0}{9}$  
			& 2253.792576403832804223$\frac{0}{9}$   \\
			\hline
			8 & 15 &  $1.004 \times 10^{-19}$ & 0.015062576067338632$\frac{0}{9}$  
			&  2304.557844448592270375$\frac{0}{9}$  \\
			\hline
			9 & 15 &  $1.159 \times 10^{-19}$ & 0.008887154873072033$\frac{0}{9}$  
			&  7918.223563171202619286$\frac{0}{9}$  \\
			\hline
			10 & 15 & $1.523 \times 10^{-19}$  & 0.020848923604938857$\frac{0}{9}$  
			& 1110.537874176532781602 $\frac{0}{9}$  \\
			\hline
		\end{tabular}
	\end{center}
	\caption{\textbf{Data associated with Theorem \ref{thm:main}:}
		the table records the 
		rigorously verified correct digits for the universal constants 
		associated with $m$-th order, degree $2$ renormalization fixed points.
		In this table, we use the notation notation $x = 1.23\frac{4}{6}$ to 
		mean that $x \in [1.234, 1.236]$ and note that the recorded interval 
		enclosures are part of what is established in the computer assisted proof. 
		Each approximate fixed point $\bar{f}_m$ is of degree $2K_m$.
		The $\lambda_m$ and $\alpha_m$ are the universal constants associated with the $f_m$,
		and the $\epsilon_m$ are the bounds on the error between the $\bar{f}_m$ 
		and the actual renormalization fixed points $f_m$. 
	}
	\label{table:1}
\end{table}	

\bigskip

\begin{figure}[t]
\begin{center}
\includegraphics[height=11.00cm]{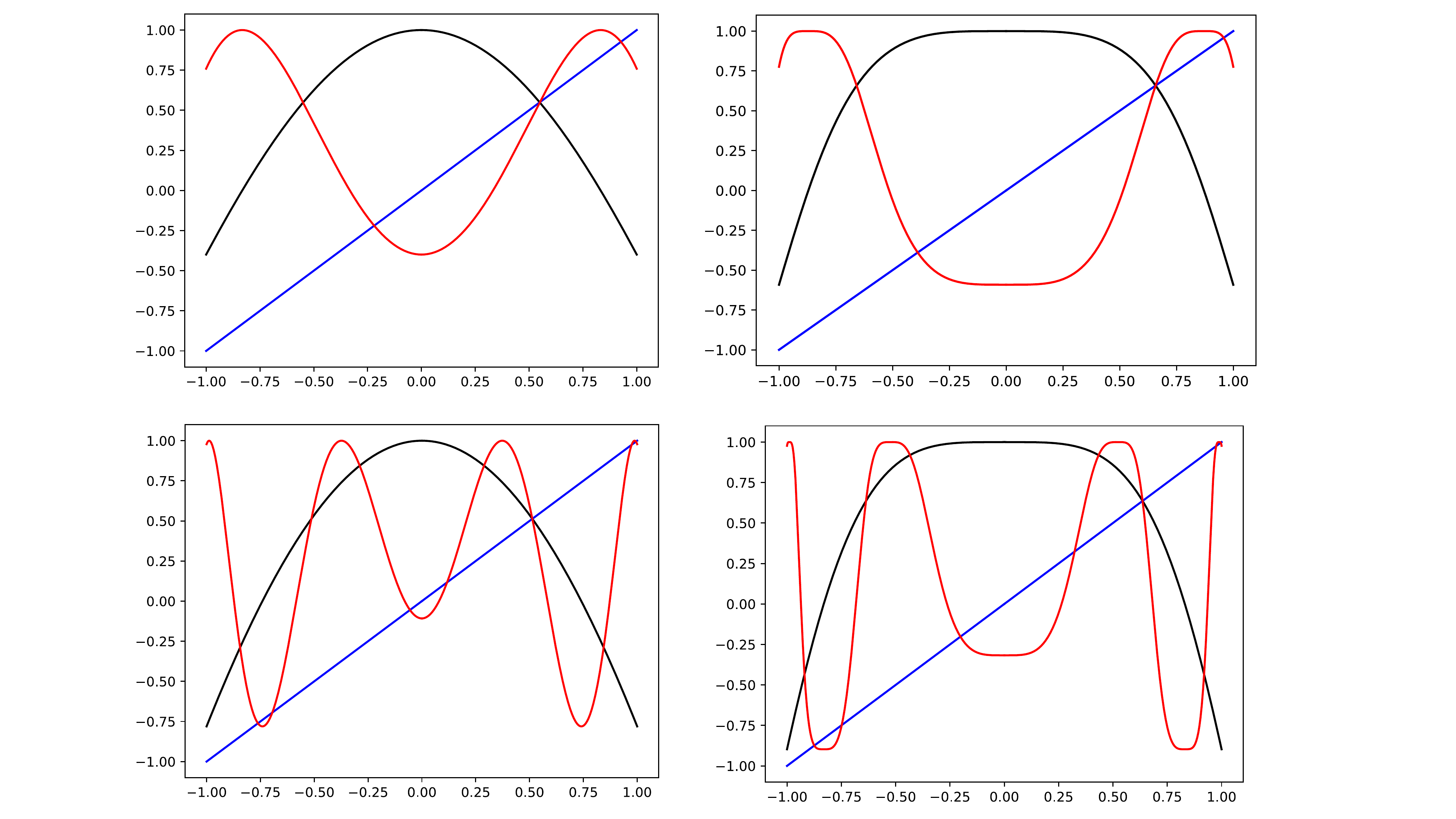}
\end{center}
\caption{\textbf{$m = 2, 3$ Solutions of Equation \eqref{eq:renormDef} with degree $d = 2,4$:} 
The figure contrasts fixed points of Equation \eqref{eq:renormDef} with differing degrees.
The top and bottom left frames illustrate the $m=2$ and $m=3$ 
Feigenbaum fixed points of degree $d = 2$. These two functions were already plotted 
in the top left and center frames of Figure \ref{fig:theFixedPoints} and are reproduced here
for the sake of comparison.  
The top and bottom right frames illustrate the $m=2$ and $m=3$ quartic fixed point described in 
Theorem \ref{thm:d2}. These have quartic maximum and are related to the $q_{d,\mu}$ given in 
Equation \eqref{eq:quadFamily}, with $d = 4$.  Comparing the images 
side-by-side highlights the qualitative
difference between the fixed points near the origin.}
\label{fig:higherDegreeFixedPoints}
\end{figure}

Our method applies also to the study of higher degree renormalization fixed points.  
To see this, we prove a theorem involving fixed points whose derivatives vanish to
order 4 at the critical point $x = 0$.

\begin{theorem}[Existence of renormalization fixed points with $d=4$ for $m=2,3$]\label{thm:d2}
	Let $\bar{g}_m$, $m = 2,  3$ denote the the polynomials 
	whose graphs are illustrated in the right frames of
	Figure \ref{fig:higherDegreeFixedPoints}, and 
	whose coefficients in the Chebyshev basis are given in the folder \texttt{renor\_code\_submit}.
	
	For $m=2,3$, there exist real numbers $0 < \epsilon_m\ll 1$, $\rho_m>1$ and 
	a function $g_m \in X_{\rho_m}$
	having that 
	\[
	\sup_{z \in \mathcal{E}_{\rho_m} } \left| 
	g_m(z) - \bar{g}_m(z)
	\right| \leq \epsilon_m,
	\]
	and that $g_m$ is an order $m$ solution of Equation \eqref{eq:renormDef} 
	of degree $d=4$. That is, we have that 
	\[
	g_m''(0) = 0, \quad \quad m = 2,3.
	\]
	Each renormalization fixed point $g_m$ 
	is locally unique.
	Interval enclosures of the associated universal constants 
	$\alpha_m$ and $\lambda_m$,
	the degree of each polynomial approximation $\bar{g}_m$, the values of $\rho_m$
	and upper bounds for the $\epsilon_m$ 
	are provided in Tables \ref{table:thm_extrafixedpoints_d2_alphalambda}.
	%	and \ref{table:thm_extrafixedpoints_d2_lambda} respectively.
\end{theorem}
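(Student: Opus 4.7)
The plan is to adapt the Newton--Kantorovich/Chebyshev contraction machinery used for Theorem~\ref{thm:main} to a codimension-one subspace of $X_{\rho_m}$ enforcing the constraint $g''(0)=0$. Writing $R_m$ for the operator from Definition~\ref{def:FixedPoint}, I would build a Newton-like operator $T(g) = g - A\bigl(R_m(g)-g\bigr)$, with $A$ an injective approximate inverse of $I - DR_m(\bg_m)$, and verify via computable $Y_0$- and $Z$-bounds that $T$ is a contraction on the closed ball $\overline{B}(\bg_m,\epsilon_m)$ in the relevant subspace.

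A short check confirms that the $d=4$ condition is preserved by $R_m$: since $g$ is even we have $g'(0)=0$, so by induction
\[
(g^m)''(0) \;=\; g''(0)\prod_{k=1}^{m-1} g'(g^k(0)),
\]
and hence $g''(0)=0$ forces $R_m(g)''(0)=0$. I would therefore restrict the analysis to the closed subspace $Y_{\rho_m} = \{g\in X_{\rho_m}:g''(0)=0\}$, which in the Chebyshev basis is cut out by a single continuous linear functional. The approximation $\bg_m$ is first obtained by a numerical Newton iteration on this restricted equation, producing a polynomial of degree $2K_m$ with $\bg_m(0)=1$, $\bg_m''(0)=0$, and $R_m(\bg_m) \approx \bg_m$ to working precision; this is the data then reported in Table~\ref{table:thm_extrafixedpoints_d2_alphalambda}.

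Next, for each $m\in\{2,3\}$, I would choose $\rho_m>1$ so that the iterated images $g^k(\alpha\,\E_{\rho_m})$, $k=1,\ldots,m$, lie strictly inside $\E_{\rho_m}$ with a margin sufficient to absorb the a-posteriori composition bounds from the paper's framework; this is precisely why the theorem states a $\rho_m$ depending on $m$ rather than a uniform $\rho$. The $Y_0$-bound then comes from an interval-arithmetic evaluation of $R_m(\bg_m)-\bg_m$ on a finite Chebyshev truncation combined with the tail estimates already developed for Theorem~\ref{thm:main}, and the $Z$-bound follows from the Chebyshev composition and rescaling estimates applied at the linearization point $\bg_m$ inside $Y_{\rho_m}$. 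Once the contraction is verified, enclosures of $\alpha_m = g_m^m(0)$ and of the isolated unstable eigenvalue $\lambda_m$ of $DR_m(g_m)\!\restriction{}{Y_{\rho_m}}$ follow by the same interval-arithmetic procedure used in the degree-$2$ case.

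The main obstacle I expect is the joint tuning of $\rho_m$ and the working precision. The flatter degree-$4$ maximum together with $m$-fold composition pushes the intermediate images $g^k(\alpha z)$ closer to the boundary of $\E_{\rho_m}$ than in the $d=2$ case, which degrades the $Z$-bounds as $\rho_m$ shrinks; on the other hand $\rho_m$ cannot be taken too large, since $g_m$ need not extend analytically that far, and indeed one expects the natural ellipse of analyticity to be smaller than in the quadratic setting. Balancing these competing constraints separately for $m=2$ and $m=3$, while keeping $Y_0$ well below the contraction margin produced by the $Z$-bounds, is where the extended-precision interval arithmetic advertised in the abstract becomes essential.
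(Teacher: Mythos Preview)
Your proposal is correct and close in spirit to the paper's proof, but differs in two organizational choices. First, the paper does not work with $R_m(g)-g$ directly but with the auxiliary map $\Phi_m(\alpha,h) = \bigl(h(0)-1,\ \alpha h - h^m(\alpha\,\cdot)\bigr)$ on $\R\times\ell^{1,\even}_\rho$, which is exactly the formulation for which the $Y$ and $Z$ bounds of Section~\ref{sec:bounds} were derived; the proof therefore reruns the very same estimates and code used for Theorem~\ref{thm:main}, only replacing the approximate data, $K$, $\rho$ and the working precision. Second, and more interestingly, the paper does \emph{not} carry out the Newton--Kantorovich argument inside the subspace $\{h''(0)=0\}$. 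The approximate inverse $A$ involves a generic numerical matrix that need not preserve this constraint, so the contraction $T$ of~\eqref{eq:defT} is established on all of $\X_\rho$, and the degree-$4$ property is obtained only a~posteriori: since $\tilde\ell^{1,\even}_\rho := \{h\in\ell^{1,\even}_\rho:h''(0)=0\}$ is closed under composition (this is the content of your chain-rule check), $\Phi_m$ and hence the \emph{exact} Newton map $\tilde T(\alpha,h)=(\alpha,h)-\bigl(D\Phi_m(\alpha,h)\bigr)^{-1}\Phi_m(\alpha,h)$ preserve $\R\times\tilde\ell^{1,\even}_\rho$; the paper then invokes a known result guaranteeing that the same $Y,Z$ estimates also make $\tilde T$ a contraction on the same ball, so by local uniqueness the fixed point must lie in the subspace once the approximate solution $(\bar\alpha,\bar g_m)$ does (which is arranged exactly via interval arithmetic). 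Your route---building $A$ and $T$ on the subspace from the outset---is conceptually tidier and avoids this extra lemma, but the paper's a~posteriori argument has the practical advantage of requiring no modification whatsoever to the already-implemented bounds.
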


%%%%%%%%%%%%%%%%%%%%%%%%%%%%%%%%%%%%%%%%%%%%%%%%%%%%%%%%%%
\begin{table}[h!]
	\begin{center}
		\begin{tabular}{|c| c|c|c| c| c |} 
			\hline
			$m$ & $K_m$ & $\rho_m$ &  $\epsilon_m$  & $\alpha_m$ \\ 
			\hline\hline
			$2$ & $100$ & $1.197$ & $10^{-40}$ & $0.59160991663443815013962435438162895379022\pm 10^{-41}$  \\ \hline
			$3$ & $100$ & $1.180$ & $10^{-45}$ & $-0.3172430469535930185874716238506883387118289297\pm 10^{-46}$  \\ \hline
		\end{tabular}
		
\bigskip
		
		\begin{tabular}{|c| c |} 
			\hline
			$m$ &  $\lambda_m$ \\ 
			\hline\hline
			$2$  & $ 7.28468621707334336430893056799555306947804\pm 10^{-41}$  \\ \hline
			$3$ & $ 85.7916290913560487440851221841293886768048969103\pm 10^{-46}$  \\ \hline
		\end{tabular}
	\end{center}
	\caption{\textbf{Data associated with Theorem~\ref{thm:d2} ($d=4$):}
 Each approximate fixed point $\bar{g}_m$ is of degree $2K_m$.
		The $\lambda_m$ and $\alpha_m$ are the universal constants associated with the $g_m$, which are proven to be analytic on at least the Bernstein ellipse of radius $\rho_m$,
		and the $\epsilon_m$ are the bounds on the error between the $\bar{g}_m$ 
		and the actual renormalization fixed points $g_m$.}
	\label{table:thm_extrafixedpoints_d2_alphalambda}
\end{table}

%\begin{table}[h!]
%	\begin{center}
%		\begin{tabular}{|c| c| c| c |} 
%			\hline
%			$id$ &  $\epsilon$  & $\lambda$ \\ 
%			\hline\hline
%			$(2,4)_1$ & $10^{-40}$ & $ 7.28468621707334336430893056799555306947804\pm 10^{-41}$  \\ \hline
%			$(3,4)_1$ & $10^{-45}$ & $ 85.7916290913560487440851221841293886768048969103\pm 10^{-46}$  \\ \hline
%		\end{tabular}
%	\end{center}
%	\caption{\textbf{Feigenbaum first constant associated with fixed points of Theorem \ref{thm:d2}:} 
%	The two numbers in \emph{id} denote the order $m$ and the degree $d$ of the renormalization 
%	fixed points, as defined in Remark~\ref{rem:FP_properties}. The table records the rigorously 
%	verified enclosures for the universal constants $\lambda$ associated with the renormalization 
%	fixed points available in the folder \texttt{renor\_code\_submit} and proven to exists in Theorem~\ref{thm:d2}.}
%	\label{table:thm_extrafixedpoints_d2_lambda}
%\end{table}

A number of additional results are discussed in Section \ref{sec:results},
including some global non-uniqueness results.  In particular, 
for $m = 5 \ldots 10$, we prove the existence
of at least two (and in some cases more) distinct renormalization fixed points of degree $d=2$,
together with their associated universal constants. 
Numerically computed kneading sequences associated with each renormalization 
fixed point are reported in Table \ref{table:kneading_seq}. 
Note that while we do establish the existence of multiple fixed points at 
several orders, we do not (in this paper) prove that we have found all 
possible fixed points for each order $m$.  Indeed, for degree two fixed points
it is known that there exists an unique fixed point for each ``allowable'' kneading sequence, 
and that for $m \geq 4$ there are several such sequences at each order.
We refer to \cite{de2011combination} for much more complete discussion.
%For convenience %, in place of the kneading sequence, 
%we identify the computed fixed points by an \textit{id} of the form 
%\texttt{(m value) v (some index)}. 

Our computations are implemented in 
multiple precision interval arithmetic, and to highlight the value of this 
we have carried out, for the classical case of $m = 2$,
verified computations of the renormalization fixed point, as well as
the eigenvalue and eigenfunction, using 
extended precision interval arithmetic and more than $600$ Chebyshev modes.
This results in a guaranteed interval enclosure of the $m=2$ universal constants $\lambda$ and $\alpha$
-- also known as the (firs and second) Feigenbaum constants --  
precise to 500 digits, see Theorem~\ref{thm:m2}.
Even more accurate enclosures could in fact easily be obtained with
longer runtimes, and  we refer again to Section \ref{sec:results} for more details.

A sketch of the proof  of Theorem \ref{thm:main} is as follows.  For each 
$m = 2, \ldots, 10$, 
we numerically approximate a solution of Equation \eqref{eq:renormDef}
using an iterative Newton scheme.
This requires discretizing the elements of $X_\rho$, 
and for this we use Chebyshev series truncated after the $K_m$-th mode
(see \cite{MR1874071,MR4050406}
and also Section \ref{sec:Cheb} for precise definitions).
It is also necessary to seed the Newton method with a reasonable
initial guess. For the fixed points described in Theorem \ref{thm:main}
we start the Newton
iteration from an appropriately chosen 
function $q_{1,\mu} \in X_\rho$ in the 
quadratic family (see Equation \eqref{eq:quadFamily}).
Methods for choosing good starting values of $\mu$, 
depending on the desired degree $m \geq 2$ of the 
renormalization fixed point are well known.  
See for example \cite{de2011combination}.

Running the numerical Newton method results in a truncated Chebyshev series (i.e., a polynomial), 
which we denote by $\bar f_m$. The graphs of these polynomials are  
illustrated in Figure \ref{fig:theFixedPoints} for $m = 2, \ldots, 10$, and 
their coefficients are stored in the files described in the Theorem. 
It must be mentioned that similar numerical schemes, 
based on Chebyshev series approximation, were used in
the works of Mathar \cite{mathar2010chebyshev} and Molteni
\cite{molteni2016efficientmethodcomputationfeigenbaum}.
However these works only focus on the case of $m=2$ and do not 
consider validated error bounds.

After performing the numerical calculations described above, 
the existence of a true renormalization fixed point
$f_m$ near the approximate solution $\bar f_m$ 
is established via a Newton-Kantorovich 
argument.  This requires obtaining
mathematically rigorous bounds on the defect associated 
with the approximate solution, as well as on some other condition numbers.
Here we work in a Banach space of rapidly decaying infinite sequences of 
Chebyshev series coefficients, endowed with a weighted $\ell^1$ norms.
We note that elements of this sequence space are also interpreted as functions
in $X_\rho$, and that the ability to go back and forth 
between these interpretations facilitates certain error bounds
 obtained by combining 
interval arithmetic with interpolation and asymptotic decay rate 
estimates. Describing these in a way that covers all $m, d \geq 2$
is one of the main tasks of the present work.
Interval enclosures of the universal constants are obtained by applying similar  
arguments to the simultaneous eigenvalue/eigenfunction problem.
The details are discussed in Section \ref{sec:results}.

The proof of Theorem \ref{thm:d2} is similar.  The Newton scheme is now seeded using 
an appropriate function $f_{4,\mu}(x) = 1 + \mu x^4$ from the quartic family.  
After the Newton method converges numerically, we have to project the numerical 
guess $\bar g_m$ from the hypothesis of Theorem \ref{thm:d2} into the subspace of 
functions with second derivative equal to zero.  We then show that the renormalization 
operator leaves this subspace invariant, and argue that the true solution lies in the 
desired subspace as well.  Again, the details are in Section \ref{sec:results} .

\bigskip

\begin{remark}[Normalizations] \label{rem:normalization}
{\em
The Feigenbaum-Cvitanovich equation can be formulated in various
equivalent ways.  
For example, in the $m= 2$ case, the equation
is often stated as 
\begin{equation} \label{eq:oldFeig}
\frac{1}{f(1)} f(f(-f(1) x)) = f(x), \quad \quad \quad \quad -1 \leq x \leq 1,
\end{equation}
subject to the additional constraint 
\[
f(0) = 1.
\]
The minus sign in front of the innermost $-f(1)$ term is removed by restricting to 
a space of even functions.  Then,   
when representing $f$ using a Taylor series, the $f(0) = 1$ constraint is then
enforced by simply imposing that the constant term in the Taylor expansion
is one, and solving for the remaining coefficients (evenness is also imposed on 
the level of Taylor coefficients).

When working with Chebyshev series, 
the $f(0) = 1$ constraint is more global, as it involves 
the Chebyshev coefficients of all orders
(see again Section \ref{sect:setup}).
This is one reason for imposing the normalization $f(0) = 1$
directly in the functional equation, as discussed 
at Equation \eqref{eq:normalization} above.
}
\end{remark}

\begin{remark}[Chebyshev versus Taylor series] \label{rem:cheb}
{\em
Heuristically speaking, the reason for studying solutions of 
Equation \eqref{eq:renormDef} on Bernstein ellipses 
is already suggested, at least implicitly, by the theoretical work of 
Epstein and Lascoux in \cite{MR634164}, and the 
numerical work of Nauenberg in \cite{MR894403}.
These authors provide detailed quantitative information about 
the shape of the domain of analyticity of the Feigenbaum 
function (solutions of the $m= 2$ case of Equation \eqref{eq:renormDef})
and show for example that its domain has a fractal shaped
 boundary enclosing the set 
$\mathbb{R} \cup i \mathbb{R}$. In particular, there exists a disk 
of radius $R > 0$ large enough so that its interior is not contained in the domain.
See for example Figure 1 of \cite{MR894403}.

Then, it is only by good luck  
that the domain of analyticity is large enough to contain a disk at the origin
of radius $r \approx 2.5 > 1$.  Because of this, the 
fixed point has a Taylor series at the origin which is convergent on $[-1,1]$,   
a necessary condition for the success of 
computer assisted methods of proof based on 
Taylor series. Note however that  
there is no guarantee that this luck will hold
as either the order or the degree of the solution is increased.

Indeed, the results of Eckmann and Wittwer in \cite{MR788690}
show that when $m= 2$ and the degree $d$ of the solution
of Equation \eqref{eq:renormDef}
is increased, the resulting domain of analyticity shrinks to $\mathbb{R} \cup i \mathbb{R}$
as $d \to \infty$.  Then, for $d$ large enough, there is 
no single disk in the complex plane containing $[-1,1]$ on which the  
desired fixed point is analytic, and it will therefore not
be representable by a single 
convergent power series at $x=0$.
Similarly, there is no guarantee that as $m$ varies we will 
always have such a disk either. 

These considerations suggests the use of Bernstein ellipses,
and after settling on such, 
the choice of Chebyshev series is natural. 
}
\end{remark}

\subsection{Computer assisted proof in Renormalization theory} \label{sec:renormLit}
The strategy of computer assisted proof sketched above is, in broad outline,
the one used by Lanford in the original computer assisted  
existence proof of the $m= 2$ fixed point \cite{iii1982computer}.
Similar arguments were later used by Eckman, Wittwer, and Koch 
in the complete proof of the complete Feigenbaum 
conjectures \cite{iii1982computer,eckmann1987complete,MR0727816}.
The last reference just cited discusses additional theorems in 
renormalization theory proven in the same style.
We refer also the works of  Burbanks, Osbaldestin, and
Thurlby \cite{burbanks2021rigorous2,burbanks2021rigorous1}
for more recent computer assisted results for $R_m$ when $m = 2$
for degree $d = 2$ and $d = 4$ fixed points.
We note however the the works just cited impose the constraint $d = 4$ 
in their functional analytic setup, so that this approach requires 
modifying the arguments for each $d$ considered.

There are also computer assisted theorems for renormalization operators
other than Feigenbaum-Cvitanovi\'{c} 
(that is, for situations other than period doubling in one parameter 
families of one dimensional unimodal maps) which 
use arguments similar to those in the outline sketched above.  
We refer for example to the works of Koch and Wittwer
\cite{MR0859824,MR1291247}, Koch \cite{MR2112709,MR4574150},
Gaidashev and Koch \cite{MR2818692}, Arioli and Koch 
\cite{MR2594332},
and Gaidashev and Yampolsky \cite{MR4480373}.
We also remark that  computer assisted proofs based
on Newton-Kantorovich and/or contraction mapping arguments
are common in a wide variety of mathematical settings
including dynamical studies of ordinary, delay, and partial differential 
equations.  The interested reader is referred to the review articles 
of van den Berg and Lessard \cite{MR3444942} and of 
G\'{o}mez-Serrano \cite{MR3990999} for more general discussion.

All of this being said, studying Equation \eqref{eq:renormDef} on 
 $\mathcal{X}_\rho$ and discretizing using 
 using Chebyshev series
completely changes the technical character of the arguments,
justifying the need for the present work.  More precisely,
the operations of composition and rescaling 
(see again Equation \eqref{eq:renormDef})
are less natural for Chebyshev than for Taylor series.
For example, when working with Taylor series, 
compositions are worked out via
iterated Cauchy products -- which scale poorly as $m$ is increased.  
For Chebyshev series, an analogous approach leads to iterated
discrete convolutions of even worse complexity.  Similarly, while rescaling 
is a diagonal linear operator in Taylor coefficient space, this is not so 
in Chebyshev space.  

To overcome these issues 
we employ techniques based on 
interpolation.  Indeed, recalling that a Chebyshev series is 
 ``a Fourier series in disguise,'' we 
utilize the discrete Fourier transform (DFT) to evaluate compositions and 
rescalings ``in grid space,'' where such operations are
essentially ``diagonal'' (that is, they have algorithmic complexity which scales
with $N$,  the degree of the Chebyshev series approximation).
Of course this introduces the additional cost of the DFT, but this
 can be made ``essentially linear'', that is $N \log(N)$,
  by employing the fast Fourier transform (FFT). 

The DFT techniques just described could also be adapted for computer
assisted proofs with Taylor series as well.  The real advantage then 
of using Chebyshev series 
is that the Bernstein ellipses, the natural domains of analyticity 
for Chebyshev series, are especially well adapted for studying 
real analytic functions on intervals.  Chebyshev series are also known to 
have excellent uniform approximation properties on closed intervals.

For computer assisted proofs using the interpolation based DFT methods 
just described, bounding discretization and truncation errors requires 
managing the so called ``aliasing error'' which is, more precisely, 
the difference between orthogonal projection and interpolation.
To obtain the necessary bounds, we start from the aliasing formula 
for Chebyshev series~\cite[Theorem 4.2]{Tre13}, and then derive interpolation 
error estimates between appropriate function spaces. These 
developments are described in detail in Section \ref{sec:aPriori}. Similar ideas 
where used by Figueras, Haro, and Luque for computing interval enclosures 
of function compositions for Fourier series
in their work on computer assisted KAM theory \cite{MR3709329}.

We remark that, validated methods for computing rigorous interval 
enclosures of functions represented by 
Chebyshev series were developed by Joldes and Brisbarre in 
\cite{MR2920548}, with extensions by Benoit, Joldes, and Mezzarobba
in \cite{MR3614019}.  Existence proofs using Chebyshev series
for differential equations with polynomial nonlinearities were developed by 
Lessard and Reinhardt in \cite{MR3148084}, and extensions to 
nonpolynomial applications in celestial mechanics
and mechanical engineering are developed in 
\cite{MR3741385,MR3896998} by exploiting ideas from 
differential algebra.  Here, one computes a rigorous 
enclosure of  a non-polynomial function by solving the
polynomial differential equation it satisfies.
While this idea works well for problems
with nonlinearities given by elementary functions, 
it is not clear how to extend this idea 
to the function compositions appearing 
in renormalization theory, and this motivates the interpolation 
based approach developed below.

\begin{figure}[h!]
\begin{center}
\includegraphics[height=8.0cm]{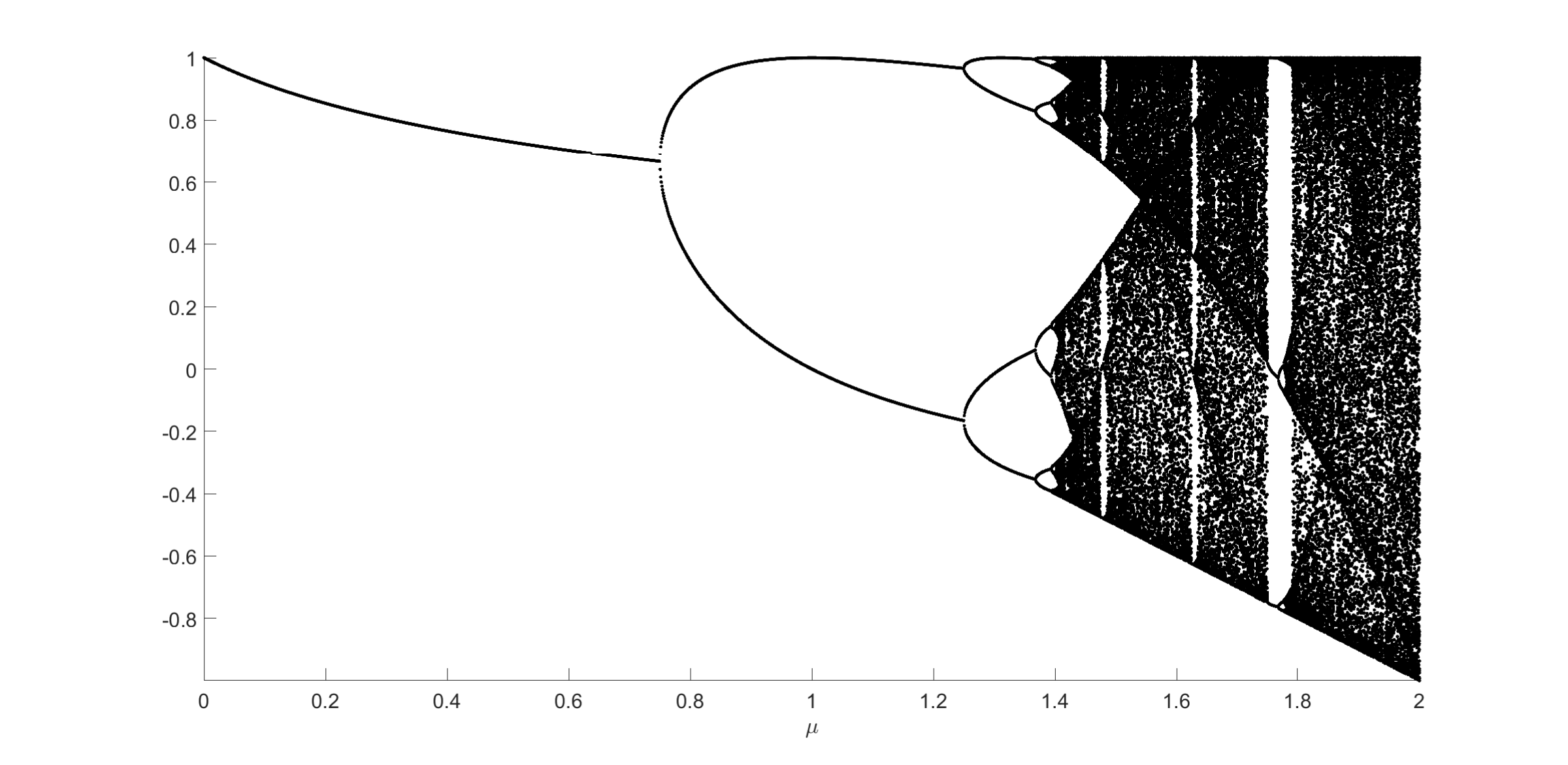} 
\end{center} \caption{\textbf{Classical period doubling cascades:} the image illustrates several 
period doubling cascades for the quadratic family $q_{2,\mu}$ of Equation \eqref{eq:quadFamily}, 
by plotting accumulation points of orbits for different values of $\mu$. The left side of the image
depicts the cascade of stable period $2^n$ orbits bifurcating from the stable fixed point and 
accumulating to the chaotic regime.  After the onset of chaos, one 
sees also the stable period three window (for $\mu$ slightly below $1.8$), and its doubling bifurcations into orbits of period $3\cdot 2^n$.  
Other stable period 5,  and 6 windows can be seen by looking carefully after the first onset of chaos, and each undergoes its own period doubling cascade. The locations of all of these doublings are governed by the 
Feigenbaum constant $\lambda_2$ given in the first row of Table \ref{table:1} (see Section~\ref{sec:background}). }
\label{fig:cascades}
\end{figure}

\begin{figure}[h!]
\begin{center}
\includegraphics[height=8.0cm]{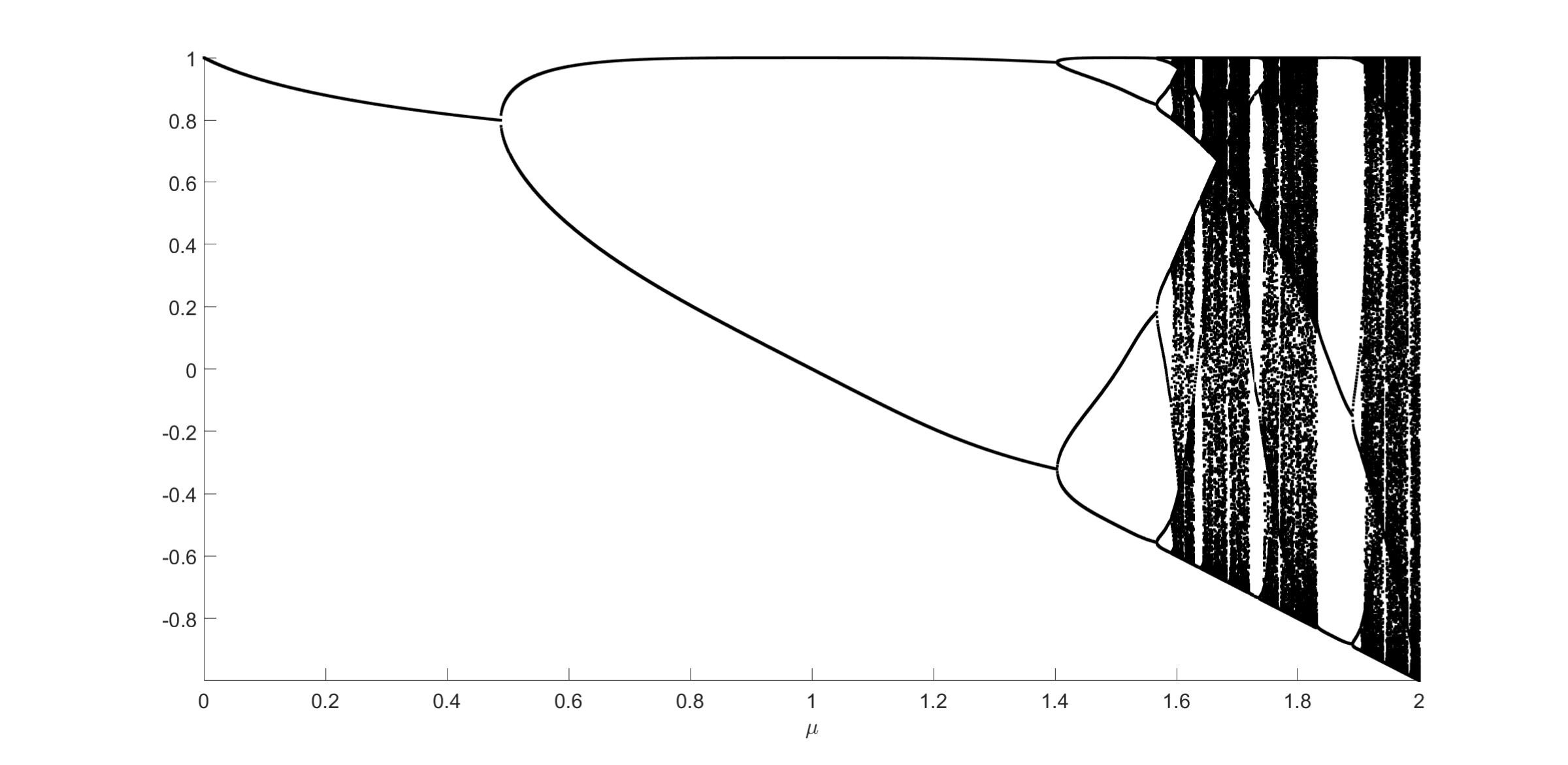} 
\end{center} 
\caption{\textbf{Period doubling for a degree 4 family:} this image illustrates
period doubling cascades for the quartic family $q_{4, \mu}$.  It 
happens that while the bifurcation diagram exhibits many of the same 
qualitative feature as in the degree $2$ case illustrated in Figure \ref{fig:cascades}, 
the quantitative rates governing the bifurcations are 
completely different.  That is, the cascades are governed by different universal constants.  
}
\label{fig:cascades_quar}
\end{figure}

\subsection{Period-tupling cascades, and their renormalization fixed points} \label{sec:background}

Period doubling cascades are central to study of 
nonlinear dynamical systems.
The topic was popularized in the 1976
paper of May in Nature \cite{mayLogistic}, but the story 
goes back much further, as illustrated for example
by the 1962 paper of Myrberg on  
the existence of period doubling in polynomial families  
\cite{MR0161968}, and the 1964 results of Sharkovsky
\cite{MR0159905}.  A lively discussion of
more than a century of ``pre-renormalization'' literature on period doubling
is found in  the 2019 review article of Collet
 \cite{COLLET2019287}.

A typical period doubling cascade is illustrated in 
the top frame of Figure \ref{fig:cascades}, 
for the quadratic family (defined in Equation \eqref{eq:quadFamily}
but recalled here for the sake of clarity) 
\begin{equation*} 
q_{2,\mu}(x) = 1 - \mu x^2.
\end{equation*}
Less typical is the cascade illustrated in Figure \ref{fig:cascades_quar}
for the quartic family 
\[
q_{4,\mu}(x) = 1 - \mu x^4.
\]
In both cases, the parameter $\mu$ represented on the 
horizontal axis, and the vertical axis depicts the attracting set for the 
map at the given parameter value.

So for example, focusing on Figure \ref{fig:cascades}, we see that 
 when $\mu < 0.75$, the map has a single attracting fixed point.  Near 
$\mu = 0.75$ an attracting period two orbit is born in a period doubling
bifurcation. Further doubling bifurcations occur at $\mu \approx 1.25$ 
and beyond, resulting in attracting orbits of period $4$, $8$, $16$ etcetera.  
After 
\[
 \mu_\infty \approx 1.4...
\]
orbits are attracted to a chaotic invariant set.

In the late 1970's, papers by 
Feigenbaum \cite{feigenbaum1978quantitative,feigenbaum1979universal}
and Coullet and Tresser \cite{coullet1978iterations,MR0512110}
presented compelling numerical evidence in support of the 
conjecture that the period doubling bifurcations occur at 
parameters $\mu_n$ having 
\[
\mu_n \to 1.4... = \mu_\infty, \quad \quad \mbox{as } n \to \infty,
\]
and that the ratios between the lengths between successive bifurcations
satisfy
\begin{equation} \label{eq:feigenbaum}
\lim_{n \to \infty } \frac{\mu_{n} - \mu_{n-1}}{\mu_{n+1} - \mu_{n}} 
= \lambda_2 \approx 4.66920...
\end{equation}
That is, the constant $\lambda_2$
eventually facilitates locating the next bifurcation in the 
sequence, once the first several are known.

The truly remarkable discovery was the ``universality'' of this behavior.
Loosely speaking, it was observed that
the numerical bifurcation diagrams associated with
one parameter families of degree two unimodal maps appear to 
satisfy Equation \eqref{eq:feigenbaum} 
\textit{with the same universal constant } $\lambda_2 \approx 4.66920$.
This is referred to as \textit{the Feigenbaum constant}.
The description of period doubling summarized 
above is part of what became known as 
\textit{the Feigenbaum conjectures}.

The key point, from the perspective of the present work, 
is that the the universality properties just described are 
related to the renormalization fixed points described in the 
introduction.  Indeed, it turns out 
that the Feigenbaum conjectures are equivalent to the 
statement that the $m=2$ renormalization operator 
has a degree $d=2$ hyperbolic fixed point satisfying certain 
geometric hypotheses. More precisely, the 
one dimensional unstable manifold attached to the fixed point
should intersect transversally the (codimension-one) set of 
superstable period two functions. Moreover the Feigenbaum constant
is seen to be the unstable eigenvalue of the $m = 2$ 
renormalization operator at the fixed point.   For a more detailed 
discussion of the equivalence between the Feigenbaum conjectures
and renormalization fixed points, we refer to the work of
Collet, Eckmann, Lanford in \cite{MR0588048}.
We also refer the interested reader to the paper   
\cite{10.1119/1.19190} by Coppersmith
for a heuristic derivation of the renormalization operator
via period doubling.

The first existence proof for a locally unique 
fixed point for the $m=2$ renormalization operator was given by Lanford in 1982
\cite{iii1982computer}, and a complete proof of the Feigenbaum conjectures
by Eckman and Wittwer appeared in 1987
\cite{eckmann1987complete}.  The results just mentioned
were established with computer assistance, and we refer to the
memoir of  Eckman, Wittwer, and Koch \cite{MR0727816} for a complete 
description.  Further developments are mentioned in Remark \ref{rem:penPaper} below.

Shortly after the work of Feigenbaum-Coullet-Tresser on period doubling, 
it was discovered that period $m$-tupling cascades are similarly
governed by higher order renormalization operators $R_m$ with $m \geq 3$,
and their universal constants (unstable eigenvalues).  
To the best of our knowledge, 
the first study of this kind was the 1979 paper of Derrida, Gervois, and Pomeau
\cite{MR0524170}, which considered universal properties of period tripling cascade
($m = 3$), and proposed a geometric explanation based on renormalization.

A period trippling bifurcation occurs when a period $k$ orbit bifurcates 
into an orbit of period $3k$, and a tripling cascade is a sequence of 
bifurcations into orbits of period $k 3^n$ for $n \geq 1$.
These bifurcations are studied further in the work of 
Gol'berg, Sinai, and Khanin \cite{MR0693727},
Hu and Satija \cite{HU1983143}, 
Delbourgo and Kenny \cite{MR0788959}, and 
Delbourgo, Hart, and Kenny \cite{PhysRevA.31.514,10.1071/PH860189}.
The paper \cite{PhysRevA.31.514} just cited is especially 
interesting from the perspective 
of the present work, and many of its results -- for example the 
values given in  Table 1 of that paper -- 
are validated in the present work.   

For the sake of intuition, 
Figures \ref{fig:cascades3} 
and \ref{fig:cascades31} illustrate two different 
period tripling ($m=3$) cascades. The first starts from a 
period 2 orbit and leads to orbits of period 6, 18, and beyond. 
The second starts 
from period 3 and leads to orbits of period 9, 27 and beyond.
Note that for $m > 2$ the bifurcations are of fold or 
tangent type, rather than the pitchfork type seen in the $m=2$ case.
The cascades  illustrated in Figures \ref{fig:cascades3} and \ref{fig:cascades31}
are governed by the constant $\lambda_3$
given in Table \ref{table:1}.  We note that similar 
$m$-tupling cascades are associated with each of the fixed points 
described in Theorem \ref{thm:main}, but that plotting the resulting 
cascades is increasing delicate.

\begin{figure}[h!]
\begin{center}
\includegraphics[height=9.0cm]{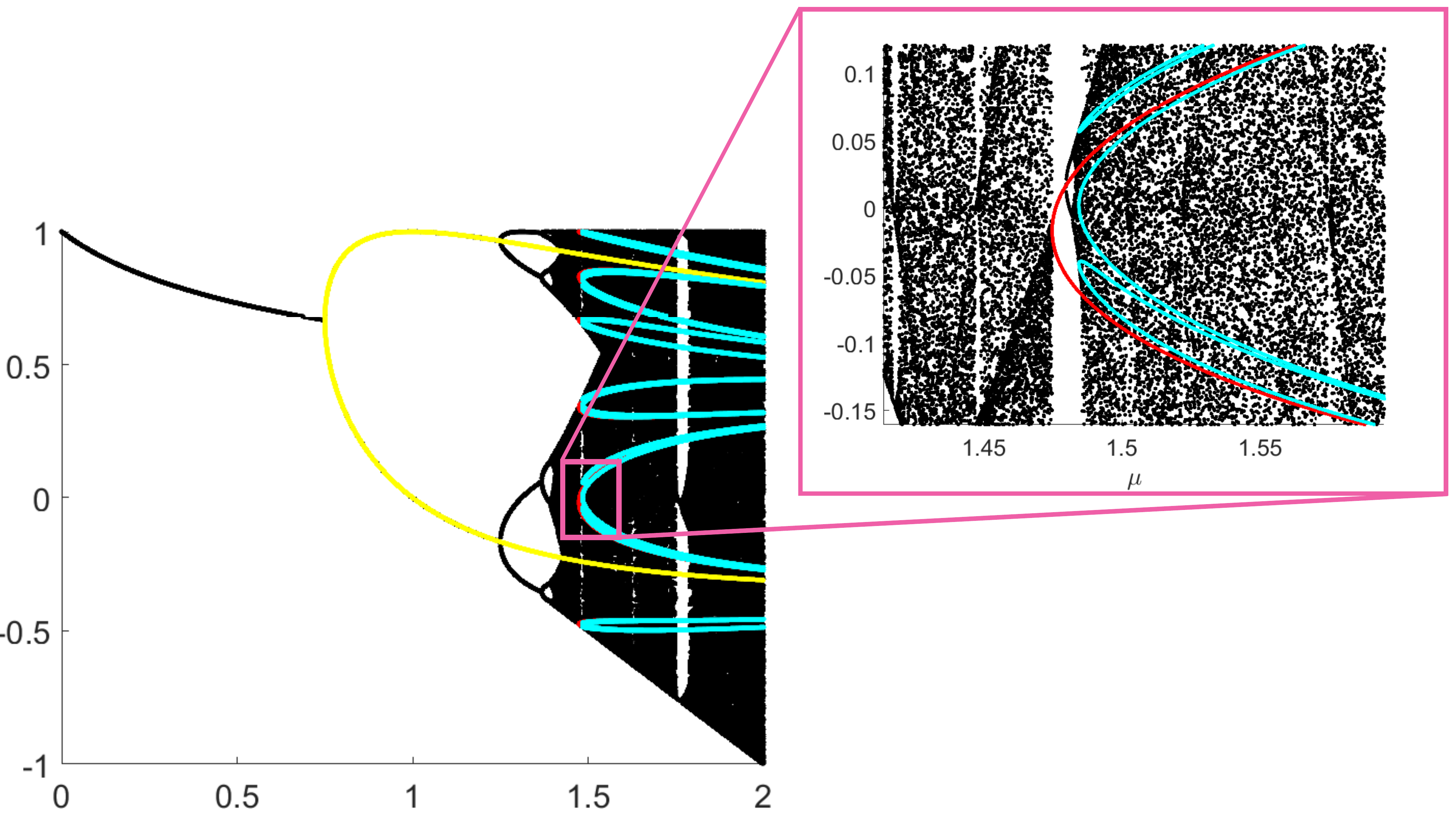}
\end{center}
\caption{\textbf{Trippling cascade from the stable period two orbit:}
the figure illustrates a period trippling cascade, starting from the 
stable period two orbit.  Again, it is important to note that for 
$m > 2$, the bifurcations appearing in the cascade are of 
tangency or fold type and hence appear ``to come out of nowhere''
(the bifurcation curves are not connected as in the $m = 2$ case).
The left frame illustrates the entire bifurcation diagram of the 
quadratic family (Equation \eqref{eq:quadFamily} with $d = 2$)
and the period two is highlighted in yellow.  Just below $\mu = 1.5$
the period trippling bifurcation occurs, and a stable and unstable period 
6 are born.  The next bifurcation occurs almost immediately, resulting 
in a stable and unstable period 18 orbit.  This is illustrated in the 
right frame of the figure where one branch of the period 6
is shown in red, followed by three branches of the period 18 in blue (the red and blue are difficult to distinguish in the frame on the left).
Bifurcations to period 54 and 
beyond are difficult to illustrate graphically, but we note that 
they can be computed numerically, for instance by exploiting that they 
follow from the universal
constant $\lambda_3$ given in the second row of Table \ref{table:1}. In order to better emphasize that the different orbits appearing in a period tripling cascade are not ``connected'', the period 2, 6 and 18 orbits under consideration here are depicted even for values of $\mu$ for which they have become unstable, contrarily to what is typically done for such bifurcation diagram like in Figure~\ref{fig:cascades} (where only stable orbits appear). }
\label{fig:cascades3}
\end{figure}

\begin{figure}[h!]
\begin{center}
\includegraphics[height=9.0cm]{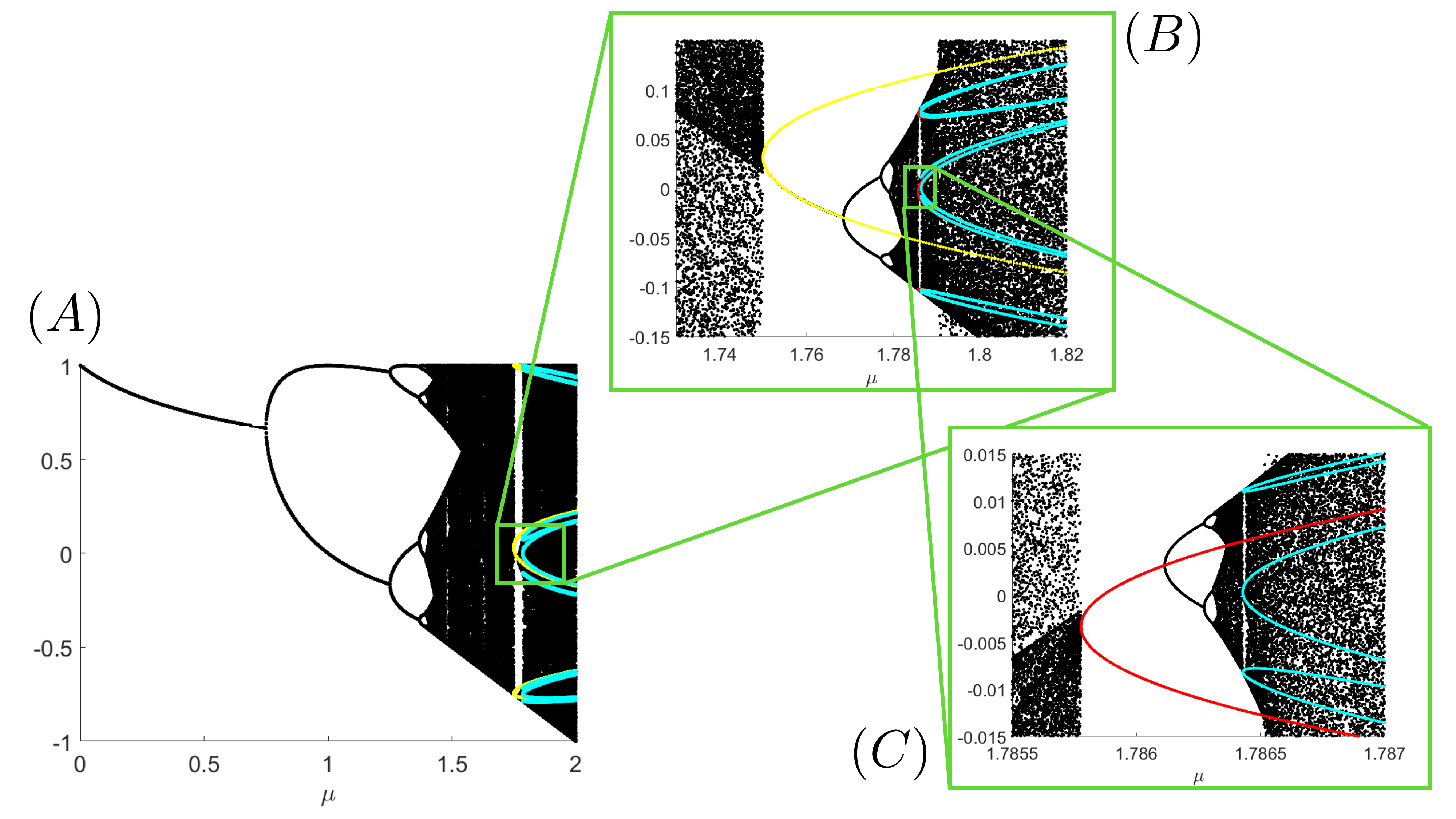}
\end{center}
\caption{\textbf{Trippling cascade from the stable period three orbit:}
we consider again the quadratic family  
(Equation \eqref{eq:quadFamily} with $d = 2$) and focus on 
the period three orbit which is born in a fold or tangency bifurcation 
near $\mu = 1.75$.  This gives rise to a stable and unstable branch
of period three, shown as three yellow curves.  
The period three participates in a period trippling bifurcation resulting 
in orbits of period $9$, $27$, ...
Frame (A) recalls the bifurcation diagram, and frame (B) 
zooms in on the middle component curve of the period three
so that we can see the birth of the period 9 orbit near 
$\mu = 1.785$, depicted in red.  Frame (C) zooms in further so that we can see 
three components of the period 27 family, depicted in blue. For period 81 and beyond it
is increasingly difficult to resolve the picture graphically, but the 
appropriate families can be located using the universal
 constant $\lambda_3$ given in the second row of Table \ref{table:1}. }
\label{fig:cascades31}
\end{figure}

\begin{remark}[Related existence and multiplicity results] \label{rem:large_m}
{\em
The results of the present study are complementary to those 
of the 1984 paper \cite{MR0763756} by Eckmann, Epstein, and Wittwer,
where the authors prove an existence theorem for fixed points of $R_m$,  
with $m$ ``large enough'' (for a particular fixed family of kneading sequences 
depending on $m$.) They derive asymptotic expansions for 
$f_m$, $\lambda_m$, and $\alpha_m$, which hold for large enough $m$.  
By contrast, the techniques of the present work facilitate the proof of  
existence results for any non-perturbative value of $m$, and any 
kneading sequence.  

An interesting problem for 
future consideration would be to determine (possibly with computer assistance)
an explicit bound $m_\infty$ such that the results of  \cite{MR0763756}
hold for $m \geq m_\infty$. Most likely, such $m_\infty$ would be greater than 
$10$, but one could the try to push the methods of the present work
to establish the results for $11 \leq m \leq m_\infty$, and therefore
obtain quantitative results for renormalization fixed points for all $m$.

We also mention the related perturbative work of Eckmann and Wittwer \cite{MR788690},
where they study fixed points of the $m=2$ renormalization operator 
as a function of the degree.  That is, they prove the existence of 
renormalization fixed points of degree $d$ near members of the 
standard family $q_{\mu, 2d}(x) = 1 - \mu x^{2d}$ in the limit as $d \to \infty$.
This is a highly singular perturbation and the authors use 
Borel transform techniques to analyze the resulting divergent power series.
}
\end{remark}

\begin{remark}[Results from the wider literature] \label{rem:penPaper}
{\em
The discussion above is heavily skewed toward the 
perspective of the present work, in the sense that we have emphasized 
the literature on numerical simulations and computer assisted proofs.  
That being said, the general theory of one dimensional unimodal maps is among the
most fully developed and celebrated in dynamical systems theory.
While a thorough review this literature is a task far beyond the scope of the
present work, we mention briefly the existence proofs of Sullivan and McMullen
\cite{MR0934326,MR1184622,mcmullen1996renormalization},
and also results on the local analysis of stable/unstable manifolds
and the more global invariant renormalization 
horseshoe found in the work of Lyubich \cite{MR1689333,MR1935840}.
Briefly, a renormalization horseshoe is a heteroclinic loop 
in function space, formed by a chain of transverse intersections 
between the unstable and stable manifolds of 
various renormalization fixed points.

The results mentioned in the preceding paragraph are proven for
the $m = 2$ renormalization operator and degree $d=2$ fixed points. 
Extensions to higher order unimodal maps and their
renormalization horseshoes are found in the work 
of Avila and Lyubich \cite{MR2854860}.  
Of particular interest to us, they show that every
fixed point of $R_m$, $m \geq 2$ is hyperbolic, with a single unique unstable 
eigenvalue and the remainder of the spectrum is stable. 
We refer also to the review article of Lyubich~\cite{MR2968954}
 for a substantive discussion of this literature 
complete with many additional references.  
}
\end{remark}

\begin{remark}[Principle of combinatorial rescaling of exponents (PASCE)] \label{rem:PASCE}
{\em
Renormalization horseshoes are considered from a
quantitative perspective by de la Llave, Olvera, and Petrov in their paper
\cite{de2011combination}.  There, the authors present compelling numerical evidence 
for their \textit{Principle of Approximate Combination of Scaling Exponents}, 
or PACSE.  Very roughly speaking, the idea of PACSE is that if 
$p_1$ and $p_2$ are renormalization 
fixed points for $R_{m}$ and $R_{n}$ respectively
(associated with some fixed choice of finite combinatorics), 
having associated universal constants
(unstable eigenvalues) $\lambda_1, \lambda_2 > 1$, and if the unstable
manifold of $p_1$ intersects transversally the stable manifold of $p_2$ and vice versa,
then in some neighborhood of the resulting heteroclinic cycle
there is an infinite sequence $\{p_{k_j}\}_{j = 1}^\infty$
of fixed points for renormalization operators $R_{k_j}$.  
Moreover, the universal constants 
$\lambda_{k_j}$ (and resulting combinatorics)
can be (approximately) computed via some explicit combinatorial formulas
involving $\lambda_1$ and $\lambda_2$.  The authors provide detailed numerical 
evidence for the PACSE, not only for renormalization of one dimensional maps, but 
also for renormalization of quasiperiodic transitions in circle maps, 
boundaries of Segel disks, and break down of invariant circles in 
area-preserving twist maps.  Beyond this, they argue that the PACSE should hold
for many other renormalization phenomena in mathematics and mathematical 
physics.

Given that so many results for renormalization fixed points and renormalization 
horseshoes for one dimensional unimodal maps
have been established using analytic arguments 
(Remark \ref{rem:penPaper} above) we should stress that an advantage of 
constructive, computational arguments like those developed in the 
present work (and in many other places, as discussed above) 
is that they provide information about the quantitative
features of fixed points and also precise bounds on the universal scalings.
The tools developed in the present work form part of 
a toolkit needed for verifying the hypotheses,
and applying the results of the PACSE theory in explicit examples. 
In a future work, we will combine the techniques developed below 
with the computer assisted methods for 
studying transverse connecting orbits in infinite dimensional 
discrete time dynamical systems developed by  
de la Llave and the third author in  \cite{MR3516860,MR3735860}.
This program will result in a computational framework for mathematically  
rigorous PACSE computations which could potentially be applied in
many renormalization settings.  
}
\end{remark}

The remainder of the paper is organized as follows. Section~\ref{sect:setup} introduces a zero-finding problem equivalent to Equation~\eqref{eq:renormDef}, that will be used for enclosing the renormalization fixed point via a Newton-Kantorovich argument, together with the precise Banach space in which this argument will be applied. A suitable finite dimensional projection and associated a priori error estimates are presented in Section~\ref{sec:aPriori}. The details of the Newton-Kantorovich argument are then laid out in Section~\ref{sec:fixed_point}, and the necessary estimates derived in Section~\ref{sec:bounds}. A similar procedure is presented in Section~\ref{sec:eigenvalue} in order to rigorously enclose the unstable eigenvalue of the obtained renormalization fixed points. Further results are then discussed in Section~\ref{sec:results}. Details about the computer-assisted parts of the proofs can be found in the appendix.

\section{Setup}\label{sect:setup}
Our approach, being based on Newton's method,
requires us to rewrite the fixed point  
problem for $R_m$ as an equivalent zero finding problem.
In the process we also ``unwrap'' one of the composition
terms by introducing a scalar 
variable $\alpha$.  

So, for $h \in C([-1,1], \mathbb{R})$ and $\alpha \in \mathbb{R}$,
let 
\begin{equation}
\label{eq:tR_m}
\tR_m(\alpha, h) := h^m(\alpha x),
\end{equation}
and define 
$\phi_m \colon \mathbb{R} \times C([-1,1], \mathbb{R}) \to C([-1,1], \mathbb{R})$
by 
	\begin{equation}
	\label{eq:phi_m}
		\phi_m(\alpha,h)(x)=\alpha h(x) - h^m(\alpha x)=\alpha h(x) - \tR_m(\alpha,h)(x).
	\end{equation}

The equation above is one equation in two unknowns 
$\alpha$ and $h$.  To balance the equations, 
we introduce a phase condition $h(0) = 1$,
and define the ``square'' operator 
	$\Phi_m \colon \mathbb{R} \times C([-1, 1], \mathbb{R}) \to  \mathbb{R} \times C([-1, 1], \mathbb{R})$
	by the formula 
	
	\begin{equation}
	\label{eq:Phi_m}
		\Phi_m(\alpha,h)=\begin{pmatrix}
			h(0)-1 \\ \phi_m(\alpha,h)
		\end{pmatrix}.
	\end{equation}
Our goal is to find a zero of $\Phi_m$, in an appropriate function space.
It is easy to check that, if $h$ is zero of $\Phi_m$ then $h^m(0) = \alpha$ follows
from the phase condition,  so that finding a zero of $\Phi_m$ is equivalent
to finding a fixed point of the $m$-th order 
Feigenbaum-Cvitanovi\'{c} renormalization operator 
defined in Equation 
\eqref{eq:renormDef}.

To solve the equation, we will discretize the domain of $\Phi_m$ using 
Chebyshev series.  To this end, 
choose $\rho\geq 1$ and define,
\begin{equation*}
	\ell^1_\rho = \left\{ h=h_0 + 2 \sum_{k=1}^{\infty}h_{k} T_{k}, \, \ \left\Vert h 
	\right\Vert_{\ell^1_\rho}:= 
	\vert h_0\vert +2\sum_{k=1}^{\infty}\vert h_{k}\vert \rho^{k} <\infty \right\},
\end{equation*}
where $T_k$ are the Chebyshev polynomials of the first kind defined recursively by 
\[
T_0(x) = 1, 
\]
\[
T_1(x) = x,
\]
and 
\[
T_{k+1}(x) = 2x T_k(x) - T_{k-1}(x), 
\]
for $k \geq 1$.  By making the change of variables 
$x = \cos(\theta)$ one obtains that 
\[
T_k(\cos(\theta)) = \cos(k \theta).
\]
This illustrates the usual connection between Chebyshev and 
Fourier cosine series. 

Throughout the paper, we refer to the $h_k$ as the Chebyshev coefficients of $h$. We recall that any function which is at least Lipschitz continuous on $[-1,1]$ (which will be the case for all functions considered in this work) admits a unique Chebyshev series expansion, see for instance~\cite{Tre13}.

We also consider the subspace of even functions within $\ell^1_\rho$, namely
	\begin{equation*}
		\ell^{1,\even}_\rho = \left\{ h\in\ell^1_\rho,\ h_{2k+1}=0\ \forall k\geq 0 \right\}.
	\end{equation*}
	
Finally, we  seek zeros of $\Phi_m$ in the space 
\begin{equation*}
		\X_\rho = 
		\R\times \ell^{1,\even}_\rho,
\end{equation*}
endowed with the following norm
\begin{equation*}
		\left\Vert (\alpha,h) \right\Vert_{\X_\rho} = 
		\vert\alpha\vert + \left\Vert h \right\Vert_{\ell^1_\rho}.
\end{equation*}
Note that $\ell^{1,\even}_\rho$ can be identified with a subspace of the space $X_\rho$ presented in the introduction (see Lemma~\ref{lem:C0_VS_ell1}).

\section{A priori estimates} \label{sec:aPriori}
	
We need a finite dimensional projection on $\X_\rho$, 
together with some a priori projection error estimates.

	\begin{definition}
		For any $K\in\N_{\geq 0}$, we denote by $\ell^{1,K}_\rho$ the subspace of polynomial functions of degree at most $K$ embedded in $\ell^1_\rho$, i.e.,
		\begin{equation*}
			\ell^{1,K}_\rho = \left\{ h\in\ell^1_\rho,\ h_{k}=0\ \forall k>K \right\},
		\end{equation*}
		and similarly for $\ell^{1,K,\even}_\rho$.
		We also introduce
		\begin{equation*}
			\X^K_\rho = \R\times \ell^{1,K,\even}_\rho.
		\end{equation*}
		For any $h\in \ell^1_\rho$, we denote by $\Pi^K h$ the unique polynomial of degree at most $K$ taking the same values as $h$ on the Chebyshev nodes $x_0,x_1,\ldots,x_K$ given by
		\begin{align*}
			x_k = \cos\left(\frac{K-k}{K}\pi\right), \quad \forall~0\leq k\leq K.
		\end{align*}
		Given $h$ in $\ell^1_\rho$, we denote by $\check h_k$ the coefficients (in the Chebyshev basis) of the interpolation polynomial $\Pi^K$, that is,
		\begin{align*}
			\Pi^K h = \check h_0 + 2\sum_{k=1}^K \check h_k T_k.
		\end{align*}
	\end{definition}
	We emphasize that the projection operator $\Pi^K : \ell^1_\rho \to \ell^{1,K}_\rho$ is not the truncation operator, i.e. $\check h_k \neq h_k$ (unless $h\in\ell^{1,K}_\rho$). However, these coefficients are still related, as made explicit in the following statement (see 
	\cite[Theorem 4.2]{Tre13}).
	\begin{lemma}
		\label{lem:aliasing}
		Let $\rho\geq 1$, $h\in\ell^1_\rho$, and  $K\in\N_{\geq 1}$. Then,
		\begin{align*}
			\check h_0 = h_0 + 2\sum_{l=1}^\infty h_{2Kl},
		\end{align*}
		\begin{align*}
			\check h_k = h_k + \sum_{l=1}^\infty \left(h_{2Kl - k} + h_{2Kl + k}\right) \qquad k=1,\ldots,K-1,
		\end{align*}
		and
		\begin{align*}
			\check h_K = h_K + \sum_{l=1}^\infty h_{2Kl+K}.
		\end{align*}
	\end{lemma}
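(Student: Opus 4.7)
The plan is to exploit the standard correspondence between Chebyshev and Fourier cosine series. Under the substitution $x = \cos\theta$ one has $T_k(x) = \cos(k\theta)$, and the Chebyshev nodes $x_n = \cos((K-n)\pi/K)$ correspond to the equispaced angles $\theta_n = (K-n)\pi/K$. Since $\rho \geq 1$ and $h \in \ell^1_\rho$, the Chebyshev expansion of $h$ converges absolutely and uniformly on $[-1,1]$, which justifies the termwise manipulations below.

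I would next invoke the discrete orthogonality of the cosines at $\theta_0,\ldots,\theta_K$ with the convention that endpoints carry weight $1/2$ (denoted by ${}^{\prime\prime}$ on the sum), which inverts the discrete cosine transform and yields formulas
\begin{equation*}
\check h_k = \frac{1}{K c_k}\sum_{n=0}^{K}{}^{\prime\prime}\, h(x_n)\cos(k\theta_n), \qquad 0\leq k\leq K,
\end{equation*}
with $c_k = 1$ for $0 \leq k < K$ and $c_K = 2$. Substituting the expansion $h(x_n) = h_0 + 2\sum_{j\geq 1} h_j \cos(j\theta_n)$, interchanging the absolutely convergent sum over $j$ with the finite sum over $n$, and applying the discrete orthogonality relations $\sum_{n=0}^{K}{}^{\prime\prime}\cos(j\theta_n)\cos(k\theta_n)$ reduces the question to identifying the indices $j$ for which this discrete sum is nonzero.

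The key aliasing identity is $\cos((2Kl \pm j)\theta_n) = \cos(j\theta_n)$ for every $l\in\Z$ and every node index $n$, which follows from $2K\theta_n \in 2\pi\Z$ together with the evenness of cosine. Consequently, $j$ contributes to $\check h_k$ precisely when $j \equiv \pm k \pmod{2K}$. Enumerating these indices yields $j \in \{2Kl : l\geq 1\}$ for $k = 0$ (the extra factor of $2$ in the resulting formula is inherited from the prefactor in the Chebyshev expansion); $j \in \{2Kl \pm k : l\geq 1\}$ for $0 < k < K$; and for $k = K$ the two sign choices coincide, since $2Kl - K = 2K(l-1) + K$, leaving only the single series $j \in \{2Kl + K : l\geq 1\}$, which explains the asymmetry in the $k = K$ case. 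Combining these contributions produces the three claimed formulas.

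The main obstacle is purely bookkeeping at the boundary indices $k = 0$ and $k = K$, where the endpoint weights, the normalizations $c_k$, and the coincidence (or not) of the two sign choices $\pm k$ must conspire to produce exactly the prefactors appearing in the statement. Since the identity is a classical consequence of discrete Fourier analysis, one could alternatively simply invoke~\cite[Theorem 4.2]{Tre13} as the surrounding text already indicates.
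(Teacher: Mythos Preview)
Your argument is correct. The paper itself does not give a proof of this lemma at all: it simply refers the reader to \cite[Theorem~4.2]{Tre13}. What you have written is precisely the standard derivation underlying that cited result (DCT inversion formula plus the aliasing identity $\cos((2Kl\pm j)\theta_n)=\cos(j\theta_n)$ at the Chebyshev--Lobatto angles), so there is no difference in approach---you have merely supplied the details the paper chose to outsource. Your handling of the boundary cases $k=0$ and $k=K$, including the coincidence $2Kl-K=2K(l-1)+K$ that collapses the two aliasing families into one, is accurate.
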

	\begin{remark}
		When we restrict ourselves to $\ell^{1,\even}_\rho$, $\Pi^K$ can be defined in terms of half the nodes only, as $x_{K-k} = -x_k$ for all $0\leq k\leq K$, and whenever $h\in\ell^{1,\even}_\rho$ we have $h(x_{K-k}) = h(x_k)$.
	\end{remark}
	
	We will also use the notation $\Pi^{\infty}h := h - \Pi^{K}h$ for convenience, and extend $\Pi^{K}$ to $\X_\rho$ as $\Pi^{K}(\alpha, h) := (\alpha, \Pi^{K}h)$.

	\subsection{Useful facts about Chebyshev series and Chebyshev interpolation} \label{sec:Cheb}
	
	We recall here a few well known facts about Chebyshev approximation which will prove useful in this work, and again refer to~\cite{Tre13} for a wider discussion and further references.

	\begin{lemma}
		\label{lem:Tk_ellipse}
		Let $\rho\geq 1$, then
		\begin{equation*}
			\left\Vert T_k \right\Vert_{\CC^0_\rho}  = \frac{1}{2}\left(\rho^k+\rho^{-k}\right).
		\end{equation*}
	\end{lemma}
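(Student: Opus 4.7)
The plan is to exploit the standard parameterization of the Bernstein ellipse via the Joukowski map $z \mapsto \tfrac{1}{2}(z + z^{-1})$, which converts the Chebyshev polynomial into a Laurent monomial. Specifically, using the defining recursion together with $T_0 = 1$ and $T_1(x) = x$, one verifies by induction that
\begin{equation*}
T_k\!\left(\tfrac{1}{2}(z + z^{-1})\right) = \tfrac{1}{2}(z^k + z^{-k}), \qquad z \in \C \setminus \{0\}.
\end{equation*}
Thus, as $z$ ranges over the closed annulus $\{1 \leq |z| \leq \rho\}$, the quantity $\tfrac{1}{2}(z+z^{-1})$ sweeps out the filled Bernstein ellipse $\E_\rho$, and the supremum defining $\|T_k\|_{\CC^0_\rho}$ becomes
\begin{equation*}
\|T_k\|_{\CC^0_\rho} = \sup_{1 \leq |z| \leq \rho} \tfrac{1}{2}\bigl|z^k + z^{-k}\bigr|.
\end{equation*}

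Next, I would reduce the supremum to the outer boundary $|z| = \rho$. Writing $z = r e^{i\theta}$ with $r \in [1,\rho]$, the triangle inequality gives $|z^k + z^{-k}| \leq r^k + r^{-k}$, with equality achieved when $z^k$ and $z^{-k}$ are positive real multiples of one another, i.e.\ when $z^{2k} > 0$, which is attainable for any fixed $r$ by choosing $\theta = 0$. So the pointwise maximum on the circle of radius $r$ is exactly $r^k + r^{-k}$. Since $r \mapsto r^k + r^{-k}$ has derivative $k(r^{k-1} - r^{-k-1})$, which is nonnegative for $r \geq 1$, this function is nondecreasing on $[1,\rho]$ and attains its maximum at $r = \rho$. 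Combining these observations yields
\begin{equation*}
\|T_k\|_{\CC^0_\rho} = \tfrac{1}{2}(\rho^k + \rho^{-k}),
\end{equation*}
with the supremum attained at $z = \pm\rho$ (depending on parity), i.e.\ at the endpoints $\pm\tfrac{1}{2}(\rho + \rho^{-1})$ of the major axis of $\E_\rho$.

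There is no real obstacle here; the only small point requiring care is verifying that the substitution $z \mapsto \tfrac{1}{2}(z+z^{-1})$ actually surjects onto all of $\E_\rho$ (not just $\partial \E_\rho$) so that the supremum is truly over the closed ellipse, which follows from parameterizing concentric sub-ellipses by circles of radii $r \in [1,\rho]$. Alternatively, one can bypass this remark entirely by invoking the maximum modulus principle: $T_k$ is entire, so its supremum over the compact set $\E_\rho$ is attained on $\partial \E_\rho$, where $z = \rho e^{i\theta}$ and the calculation above gives the value $\tfrac{1}{2}(\rho^k + \rho^{-k})$ at $\theta = 0$.
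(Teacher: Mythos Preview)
Your proof is correct and follows essentially the same approach as the paper: both hinge on the Joukowski identity $T_k\!\left(\tfrac{1}{2}(z+z^{-1})\right)=\tfrac{1}{2}(z^k+z^{-k})$. The only minor differences are that the paper verifies this identity by analytic continuation from the unit circle (using $T_k(\cos\theta)=\cos(k\theta)$) rather than by induction on the recursion, and that you spell out the supremum computation more carefully than the paper, which simply asserts the result follows directly.
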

	\begin{proof}
		This identity follows directly from the fact that
		\begin{align}
			\label{eq:id_analytic}
			T_k\left(\frac{z+z^{-1}}{2}\right) = \frac{z^k+z^{-k}}{2},\quad \forall~z\in\C\setminus\{0\}.
		\end{align}
		In order to check that~\eqref{eq:id_analytic} holds, one can for instance notice that both sides are analytic functions of $z$ on $\C\setminus\{0\}$, which coincide on the unit circle (because $T_k(\cos\theta) = \cos(k\theta)$), therefore they must coincide everywhere on $\C\setminus\{0\}$.
	\end{proof}
	
	\begin{lemma}
		\label{lem:derCheb}
		Let $\rho > \rho' >1$. For any $h\in\ell^1_\rho$, we have that $h'\in\ell^1_{\rho'}$
		 and that 
		\begin{align*}
			h' = 2 \left(\left(\sum_{l=0}^\infty (2l+1)h_{2l+1} \right)T_0 + 2\sum_{k=1}^\infty \left(\sum_{l=0}^\infty (k+2l+1)h_{k+2l+1} \right)T_k \right).
		\end{align*}
	\end{lemma}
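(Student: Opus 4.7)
My plan is to establish the formula by formal termwise differentiation of the Chebyshev series for $h$, and then to justify both the differentiation and the resulting reordering of summations by proving convergence in $\ell^1_{\rho'}$. The starting ingredient is the classical identity for $T_k'$ in the Chebyshev basis: using the substitution $x=\cos\theta$ (so that $T_k(\cos\theta)=\cos(k\theta)$) together with the telescoping product-to-sum identity $2\sin\theta\,\cos(j\theta)=\sin((j+1)\theta)-\sin((j-1)\theta)$, one obtains for every $k\geq 1$
\[
T_k' \;=\; 2k\bigl(T_{k-1}+T_{k-3}+\cdots+T_1\bigr)\quad (k\text{ even}),
\qquad
T_k' \;=\; 2k\bigl(T_{k-1}+T_{k-3}+\cdots+T_2\bigr) + k\,T_0 \quad (k\text{ odd}).
\]
I would briefly record this standard computation at the start of the proof.

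Next, I would substitute into $h' = 2\sum_{k\geq 1} h_k T_k'$ and swap the order of summation so as to collect, for each output index $j$, the contributions coming from all $k>j$ with $k-j$ odd. Reindexing such $k$ as $k=j+(2l+1)$ with $l\geq 0$, and separately gathering the $T_0$-contributions from odd values of $k$, yields exactly the stated formula. The outer factor of $2$ comes from the coefficient $2h_k$ in the expansion of $h$, while the inner factor of $2$ in front of $\sum_{k\geq 1}$ reflects the same ``$h' = h'_0 + 2\sum_{k\geq 1} h'_k T_k$'' normalization applied to the derivative.

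The only delicate point is justifying these formal manipulations, which reduces to proving $h' \in \ell^1_{\rho'}$. After applying Tonelli to the double sum for $\|h'\|_{\ell^1_{\rho'}}$ and evaluating an inner geometric sum in $\rho'$ (which is bounded because $\rho'>1$), one arrives at an estimate of the form
\[
\|h'\|_{\ell^1_{\rho'}} \;\leq\; C_1(\rho')\sum_{k\geq 1} k\,|h_k|\,(\rho')^k.
\]
Since $\rho'<\rho$, the factor $k\,(\rho'/\rho)^k$ is uniformly bounded in $k$, so the right-hand side is dominated by $C_2(\rho,\rho')\,\|h\|_{\ell^1_\rho}<\infty$. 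This absolute convergence simultaneously validates the termwise differentiation and the interchange of sums, and gives $h' \in \ell^1_{\rho'}$. I expect the main bookkeeping hurdle to be carrying the parity split in the expansion of $T_k'$ consistently through the reindexing, but no conceptual obstacle arises.
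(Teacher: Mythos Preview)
Your proposal is correct and complete: the classical expansion of $T_k'$ in the Chebyshev basis, followed by reindexing and a Tonelli/geometric-series estimate exploiting $\rho'<\rho$, is exactly the right argument, and your bookkeeping of the parity split and of the normalization $h' = h'_0 + 2\sum_{k\geq 1} h'_k T_k$ matches the stated formula. Note that the paper itself does not give a proof of this lemma---it is simply stated as a standard fact about Chebyshev series---so there is no alternative argument to compare against; your write-up would in fact fill that gap.
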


	\subsection{Decay of the coefficients and comparison of norms}
	
	While the space $\X_\rho$ in which we will apply our Newton-Kantorovich argument is equipped 
	with an $\ell^1_\rho$ norm, we often prefer to discuss our final results in terms of 
	$C^0_\nu$ norms.  Moreover, for certain steps in our arguments,
	the $C^0$ norms are easier to compute; for example when dealing with compositions.  In this subsection we consider the problem of passing from one to the other.
	We also require bounds which relate the norm of the derivative of a function 
	to its $C^0$ norm, after giving up a portion of the domain.  
	Of course we need explicit constants throughout, and 
	 these are controlled by leveraging the decay of the Chebyshev 
	 coefficients, as described in the following lemma.

	\begin{lemma}
		\label{lem:decay_fk}
		Let $\rho\geq 1$ and $h$ an analytic function on $\E_\rho$. Then, for all $k\geq 0$, the Chebyshev coefficient $h_k$ of $h$ satisfies
		\begin{align*}
			\left\vert h_k \right\vert \leq \frac{\left\Vert h\right\Vert_{\CC^0_\rho}}{\rho^k}.
		\end{align*}
		If $\rho>1$, then we also have an estimate on the coefficients of $\Pi^K h$, namely
		\begin{align*}
			\vert \check h_0 \vert \leq \left\Vert h  \right\Vert_{\CC^0_\rho} \frac{\rho^{2K}+1}{\rho^{2K}-1},
		\end{align*}
		\begin{align*}
			\vert \check h_k \vert \leq \frac{\left\Vert h  \right\Vert_{\CC^0_\rho}}{\rho^k} \frac{\rho^{2K} + \rho^{2k}}{\rho^{2K}-1} , \qquad k=1,\ldots,K-1,
		\end{align*}
		and
		\begin{align*}
			\vert \check h_K \vert \leq \frac{\left\Vert h  \right\Vert_{\CC^0_\rho}}{\rho^K} \frac{\rho^{2K}}{\rho^{2K}-1}.
		\end{align*}
	\end{lemma}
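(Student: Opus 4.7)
The plan is to exploit the conformal identification between Chebyshev series on the interval and Laurent series on an annulus, already hinted at in the proof of Lemma~\ref{lem:Tk_ellipse}. Via the Joukowski map $z \mapsto (z+z^{-1})/2$, the closed Bernstein ellipse $\E_\rho$ is exactly the image of the closed annulus $1 \leq |z| \leq \rho$. Setting $\tilde h(z) := h\!\left((z+z^{-1})/2\right)$, the hypotheses on $h$ (analyticity on the interior of $\E_\rho$, continuity on the boundary) transfer to analogous properties for $\tilde h$ on the closed annulus, with $\sup_{|z|=\rho}|\tilde h(z)| \leq \|h\|_{\CC^0_\rho}$. Combining the Chebyshev expansion $h = h_0 + 2\sum_{k\geq 1}h_k T_k$ with identity~\eqref{eq:id_analytic} yields the Laurent expansion
\begin{equation*}
\tilde h(z) = h_0 + \sum_{k\geq 1} h_k \bigl(z^k + z^{-k}\bigr),
\end{equation*}
so that the Chebyshev coefficients of $h$ coincide, for $k\geq 1$, with the $z^k$-coefficients of $\tilde h$.

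From here, the first bound is just Cauchy's inequality applied to $\tilde h$ on the circle $|z|=\rho$: for $k\geq 0$,
\begin{equation*}
h_k = \frac{1}{2\pi i}\oint_{|z|=\rho} \tilde h(z)\, z^{-k-1}\, dz, \qquad |h_k| \leq \frac{\|h\|_{\CC^0_\rho}}{\rho^k}.
\end{equation*}
The only delicate point is that $\tilde h$ is a priori only continuous (not analytic) on $|z|=\rho$; this is standard, and handled by applying the estimate on $|z|=\rho'$ for $\rho'<\rho$ and letting $\rho'\to\rho$ using continuity up to the boundary, or equivalently by interpreting the integral as a Poisson-type boundary integral.

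The three bounds on the interpolation coefficients $\check h_k$ then fall out of Lemma~\ref{lem:aliasing} combined with the first bound, via elementary geometric series. For instance, for the endpoint index $k=0$,
\begin{equation*}
|\check h_0| \leq |h_0| + 2\sum_{l=1}^{\infty} |h_{2Kl}| \leq \|h\|_{\CC^0_\rho}\!\left(1 + 2\sum_{l=1}^{\infty}\rho^{-2Kl}\right) = \|h\|_{\CC^0_\rho}\cdot\frac{\rho^{2K}+1}{\rho^{2K}-1}.
\end{equation*}
For $1\leq k\leq K-1$, the aliased contributions pair up as $|h_{2Kl-k}| + |h_{2Kl+k}| \leq \|h\|_{\CC^0_\rho}(\rho^k+\rho^{-k})\rho^{-2Kl}$, and summing in $l$ yields exactly $\|h\|_{\CC^0_\rho}\rho^{-k}(\rho^{2K}+\rho^{2k})/(\rho^{2K}-1)$ after combining with the leading term $|h_k|\leq\|h\|_{\CC^0_\rho}\rho^{-k}$. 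The terminal case $k=K$ is analogous but only involves a single series.

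In short, there is no real obstacle: the first estimate is the standard Cauchy bound transported through the Joukowski change of variables, and the second batch of estimates is a triangle inequality plus three geometric series. The only aspect requiring any care is the boundary regularity in the Cauchy integral, which is resolved by a routine $\rho'\nearrow\rho$ limiting argument.
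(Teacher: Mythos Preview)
Your proof is correct and follows essentially the same approach as the paper: the paper simply cites \cite[Theorem~8.1]{Tre13} for the first estimate (whose standard proof is precisely the Joukowski/Cauchy argument you spell out) and then invokes Lemma~\ref{lem:aliasing} together with geometric-series summation for the bounds on the $\check h_k$, exactly as you do.
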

	\begin{proof}
		The estimate on the coefficients $h_k$ is nothing but \cite[Theorem 8.1]{Tre13}. Together with Lemma~\ref{lem:aliasing} it yields the estimates on the coefficients $\check h_k$.
	\end{proof}

	\begin{lemma}
		\label{lem:C0_VS_ell1}
		Let $\rho\geq 1$ and $h\in\ell^1_\rho$, then $h$ is analytic on $\E_\rho$ and 
		\begin{equation*}
			\left\Vert h\right\Vert_{\CC^0_\rho} \leq \left\Vert h \right\Vert_{\ell^1_\rho}.
		\end{equation*} 
		Conversely, if $\rho<\nu$ and $h$ is an analytic function on $\E_\nu$, then
		\begin{equation*}
			\left\Vert  h\right\Vert_{\ell^1_\rho} \leq \frac{\nu+\rho}{\nu-\rho}\left\Vert  h\right\Vert_{\CC^0_\nu}.
		\end{equation*}
		Furthermore, if $h$ is even, then
		\begin{equation*}
			\left\Vert  h\right\Vert_{\ell^1_\rho} \leq \frac{\nu^2+\rho^2}{\nu^2-\rho^2}\left\Vert  h\right\Vert_{\CC^0_\nu}.
		\end{equation*}
		
	\end{lemma}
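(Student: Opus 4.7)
The plan is to handle the three inequalities in order, each as a direct consequence of Lemma~\ref{lem:Tk_ellipse} (the first inequality) and Lemma~\ref{lem:decay_fk} (the last two). These are really three routine computations with geometric series, and the main content is bookkeeping; I do not expect any significant obstacle.

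For the first inequality, I would start from the Chebyshev expansion $h = h_0 + 2\sum_{k=1}^\infty h_k T_k$ and apply the triangle inequality pointwise on $\mathcal{E}_\rho$, using Lemma~\ref{lem:Tk_ellipse} which gives $\|T_k\|_{\mathcal{C}^0_\rho} = \tfrac12(\rho^k+\rho^{-k}) \leq \rho^k$ since $\rho \geq 1$. Summability of $\sum |h_k|\rho^k$ then also yields uniform convergence on $\mathcal{E}_\rho$, hence analyticity of $h$ on the interior of $\mathcal{E}_\rho$ (as a uniform limit of polynomials) plus continuity up to the boundary, justifying treating $h$ as an element of $X_\rho$.

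For the second inequality, the idea is to use the a priori coefficient bound $|h_k| \leq \|h\|_{\mathcal{C}^0_\nu}/\nu^k$ from Lemma~\ref{lem:decay_fk}, so that
\begin{equation*}
\|h\|_{\ell^1_\rho} \;\leq\; \|h\|_{\mathcal{C}^0_\nu}\left(1 + 2\sum_{k=1}^\infty (\rho/\nu)^k\right) \;=\; \|h\|_{\mathcal{C}^0_\nu}\cdot\frac{1+\rho/\nu}{1-\rho/\nu} \;=\; \frac{\nu+\rho}{\nu-\rho}\|h\|_{\mathcal{C}^0_\nu},
\end{equation*}
where the geometric series converges because $\rho<\nu$. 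For the sharpened even case, I would observe that $h\in\ell^{1,\even}_\rho$ means $h_{2k+1}=0$, so only even indices contribute, and the same computation with $(\rho/\nu)^{2k}$ in place of $(\rho/\nu)^k$ yields the factor $(\nu^2+\rho^2)/(\nu^2-\rho^2)$. The only subtlety worth flagging is that Lemma~\ref{lem:decay_fk} is stated for $\rho\geq 1$ functions analytic on $\mathcal{E}_\rho$; here we apply it with $\nu$ in place of $\rho$, which is consistent with the hypothesis that $h$ is analytic on $\mathcal{E}_\nu$.
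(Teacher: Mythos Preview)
The proposal is correct and follows essentially the same route as the paper: triangle inequality plus Lemma~\ref{lem:Tk_ellipse} for the first bound, and Lemma~\ref{lem:decay_fk} followed by a geometric-series summation for the second and third. The only cosmetic difference is that the paper cites \cite[Theorem 8.3]{Tre13} for analyticity on $\mathcal{E}_\rho$, whereas you obtain it directly from uniform convergence of the partial sums; both arguments are valid and equivalent here.
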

	\begin{proof}
		The analyticity of $h$ on $\E_\rho$ follows from \cite[Theorem 8.3]{Tre13}, and the fact that the $\CC^0_\rho$ norm is controlled by the $\ell^1_\rho$ norm is straightforward:
		\begin{align*}
			\left\Vert h\right\Vert_{\CC^0_\rho} &\leq \vert h_0\vert \left\Vert T_0 \right\Vert_{\CC^0_\rho} +2\sum_{k=1}^{\infty}\vert h_k\vert \left\Vert T_k \right\Vert_{\CC^0_\rho},
		\end{align*}
		and Lemma~\ref{lem:Tk_ellipse} shows that $\left\Vert T_0 \right\Vert_{\CC^0_\rho} \leq \rho^k$.
		The second identity follows from Lemma~\ref{lem:decay_fk}. Indeed,
		\begin{align*}
			\left\Vert h \right\Vert_{\ell^1_\rho} &= \vert h_0\vert +2\sum_{k=1}^{\infty}\vert h_k\vert \rho^{ k} \\
			&\leq \left(1 +2\sum_{k=1}^{\infty}\left(\frac{\rho}{\nu}\right)^k \right) \left\Vert h \right\Vert_{\CC^0_\nu}\\
			&= \frac{\nu+\rho}{\nu-\rho} \left\Vert h \right\Vert_{\CC^0_\nu}.
		\end{align*}
		The last estimate is obtained by using the fact that half the $h_k$ are zero when $h$ is even.
	\end{proof}
	\begin{proposition}
		\label{prop:derivative}
		Let $1\leq\rho<\nu$,
		\begin{align*}
			\sigma_{\rho,\nu}^\even = \sup_{n\in\N_{\geq 1}} \frac{2n}{\nu^{2n}}\left(\rho\frac{\rho^{2n}-1}{\rho^2-1} +\rho^{-1}\frac{\rho^{-2n}-1}{\rho^{-2}-1} \right),
		\end{align*} 
		\begin{align*}
			\sigma_{\rho,\nu}^\odd = \sup_{n\in\N_{\geq 0}} \frac{2n+1}{\nu^{2n+1}}\left(1+\rho^2\frac{\rho^{2n}-1}{\rho^2-1} +\rho^{-2}\frac{\rho^{-2n}-1}{\rho^{-2}-1} \right),
		\end{align*}
		and
		\begin{align*}
			\sigma_{\rho,\nu} = \max\left(\sigma_{\rho,\nu}^\even ,\ \sigma_{\rho,\nu}^\odd  \right).
		\end{align*}
		Then, for all $h$ in $\ell^1_\nu$
		\begin{equation*}
			\left\Vert  h'\right\Vert_{\CC^0_\rho}\leq \sigma_{\rho,\nu} \left\Vert h\right\Vert_{\ell^1_\nu}.
		\end{equation*}
		Furthermore, if $h$ is even, then
		\begin{equation*}
			\left\Vert  h'\right\Vert_{\CC^0_\rho}\leq \sigma_{\rho,\nu}^\even \left\Vert h\right\Vert_{\ell^1_\nu}.
		\end{equation*}
		
	\end{proposition}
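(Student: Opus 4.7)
The plan is to start from the explicit Chebyshev expansion of $h'$ provided by Lemma~\ref{lem:derCheb}, apply the triangle inequality together with the bound $\|T_k\|_{\CC^0_\rho} = \tfrac{1}{2}(\rho^k+\rho^{-k})$ from Lemma~\ref{lem:Tk_ellipse}, and then reorganize the resulting double sum by grouping contributions according to the Chebyshev index $n$ of $h$. Writing the intermediate bound as $\sum_{n\geq 1} c_n |h_n|$ and recalling that $\|h\|_{\ell^1_\nu} = |h_0| + 2\sum_{n\geq 1}|h_n|\nu^n$, the first estimate reduces to checking that $c_n \leq 2\sigma_{\rho,\nu}\nu^n$ for each $n\geq 1$.

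For a fixed $n\geq 1$, the $T_0$ coefficient of $h'$ contributes to $c_n$ only when $n$ is odd, and then with weight $2n$; the $T_k$ coefficient with $k\geq 1$ contributes to $c_n$ only when $k$ has the opposite parity to $n$ and $1\leq k\leq n-1$, with weight $2n(\rho^k+\rho^{-k})$. When $n=2m$ is even, summing the geometric series over odd $k\in\{1,3,\ldots,2m-1\}$ gives
\begin{equation*}
c_n = 4m\sum_{j=1}^m\left(\rho^{2j-1}+\rho^{-(2j-1)}\right) = 4m\left(\rho\frac{\rho^{2m}-1}{\rho^2-1} + \rho^{-1}\frac{\rho^{-2m}-1}{\rho^{-2}-1}\right),
\end{equation*}
so $c_n/(2\nu^n)$ is exactly the $m$-th term in the definition of $\sigma^{\even}_{\rho,\nu}$. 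When $n=2m+1$ is odd, the $T_0$ contribution provides the leading $1$ inside the bracket in the definition of $\sigma^{\odd}_{\rho,\nu}$, while summing the geometric series over even $k\in\{2,4,\ldots,2m\}$ reproduces the remaining two terms. Taking the supremum over $n\geq 1$ yields the first inequality.

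The second inequality is then immediate: when $h$ is even, all odd-indexed coefficients $h_n$ vanish, so only the even-$n$ contributions to the bound survive and everything reduces to $\sigma^{\even}_{\rho,\nu}\|h\|_{\ell^1_\nu}$.

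There is no conceptual obstacle here; the only delicate aspect is the parity-sensitive bookkeeping when reindexing the double sum and evaluating the two geometric series in exactly the form needed to match the given expressions for $\sigma^{\even}_{\rho,\nu}$ and $\sigma^{\odd}_{\rho,\nu}$, including correctly accounting for the asymmetric factor $2$ between the $T_0$ and $T_{k\geq 1}$ terms in Lemma~\ref{lem:derCheb}.
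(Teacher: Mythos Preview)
Your proposal is correct and follows essentially the same approach as the paper: both start from the Chebyshev expansion of $h'$ in Lemma~\ref{lem:derCheb}, apply the bound $\|T_k\|_{\CC^0_\rho}=\tfrac12(\rho^k+\rho^{-k})$ from Lemma~\ref{lem:Tk_ellipse}, and then regroup the resulting double sum according to the index of $h_n$, splitting into even and odd $n$ to recover precisely the expressions defining $\sigma^{\even}_{\rho,\nu}$ and $\sigma^{\odd}_{\rho,\nu}$. The only cosmetic difference is that the paper inserts factors of $\nu^{\pm n}$ from the outset and indexes the regrouped sums as $h_{2n+1}$ and $h_{2n+2}$, whereas you phrase it as bounding $c_n/(2\nu^n)$; the computations are identical.
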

	\begin{proof}
		Starting from the formula of Lemma~\ref{lem:derCheb}, taking the $\CC^0_\rho$ norm and using Lemma~\ref{lem:Tk_ellipse}, we get
		\begin{align*}
			\left\Vert h' \right\Vert_{\CC^0_\rho} &\leq 2 \left(\sum_{l=0}^\infty \frac{2l+1}{\nu^{2l+1}}\left\vert h_{2l+1}\right\vert \nu^{2l+1} + \sum_{k=1}^\infty\sum_{l=0}^\infty \frac{k+2l+1}{\nu^{k+2l+1}}(\rho^k+\rho^{-k}) \left\vert h_{k+2l+1}\right\vert \nu^{k+2l+1}\right) \\
			&= 2\left(\sum_{n=0}^{\infty}\left\vert h_{2n+1}\right\vert\nu^{2n+1} \left( \frac{2n+1}{\nu^{2n+1}}+\sum_{k=1}^n \frac{2n+1}{\nu^{2n+1}} (\rho^{2k}+\rho^{-2k})\right) \right. \\
			&\qquad \left. + \sum_{n=0}^{\infty}\left\vert h_{2n+2}\right\vert \nu^{2n+2} \sum_{k=0}^n \frac{2n+2}{\nu^{2n+2}}(\rho^{2k+1}+\rho^{-(2k+1)}) \right) \\
			&\leq 2\left(\sigma_{\rho,\nu}^\odd  \sum_{n=0}^{\infty}\left\vert h_{2n+1}\right\vert\nu^{2n+1} + \sigma_{\rho,\nu}^\even \sum_{n=1}^{\infty}\left\vert h_{2n}\right\vert \nu^{2n} \right) \\
			&\leq \sigma_{\rho,\nu} \left\Vert h\right\Vert_{\ell^1_\nu}.
		\end{align*}
		When $h$ is even, all the $h_{2n+1}$ vanish, therefore we indeed get $ \left\Vert h' \right\Vert_{\CC^0_\rho} \leq \sigma_{\rho,\nu}^\even \left\Vert h\right\Vert_{\ell^1_\nu}$.
	\end{proof}
	\begin{remark}
		Since $\nu>\rho$, the suprema defining $\sigma^\even_{\rho,\nu}$ and $\sigma^\odd_{\rho,\nu}$ are in fact maxima, and we provide in Lemma~\ref{lem:sigma_expl} a simple procedure allowing to explicitly compute them. When $\rho = 1$, the formula for these constants simply become
\begin{align*}
\sigma_{\rho,\nu}^\even = \sup_{n\in\N_{\geq 1}} \frac{(2n)^2}{\nu^{2n}}, \quad \text{and} \quad \sigma_{\rho,\nu}^\odd = \sup_{n\in\N_{\geq 0}} \frac{(2n+1)^2}{\nu^{2n+1}},
\end{align*}
and it is then straightforward to check that these suprema can be computed as
\begin{align*}
\sigma_{\rho,\nu}^\even = \max_{1\leq n \leq \lfloor \frac{1}{\ln\nu}\rfloor } \frac{(2n)^2}{\nu^{2n}},\quad \text{and}\quad \sigma_{\rho,\nu}^\odd = \max_{0\leq n \leq \lfloor \frac{1}{\ln\nu}\rfloor} \frac{(2n+1)^2}{\nu^{2n+1}}.
\end{align*}
	\end{remark}
	
	\subsection{Projection errors}
	
	This subsection contains crucial projection/interpolation error estimates, again with explicit constants. Because our analysis involves going back and forth between $\ell^1_\rho$ and $\CC^0_\nu$, we need to estimate the interpolation error in $\CC^0$ in terms of the $\ell^1$ norm, and vice-versa.
	
	\begin{lemma}
		\label{lem:PiVSrho}
		Let $\rho\geq 1$ and $h\in\ell^1_\rho$. Then
		\begin{equation*}
			\left\Vert \Pi^K h \right\Vert_{\ell^1_\rho} \leq \left\Vert h \right\Vert_{\ell^1_\rho}.
		\end{equation*}
	\end{lemma}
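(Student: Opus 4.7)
The plan is to start from the definition
\[
\|\Pi^K h\|_{\ell^1_\rho} = |\check h_0| + 2\sum_{k=1}^K |\check h_k|\rho^k,
\]
substitute the aliasing formulas from Lemma~\ref{lem:aliasing} into each $\check h_k$, and apply the triangle inequality to obtain a pointwise upper bound of the form $\sum_{j\geq 0} c_j |h_j|$. The goal is then to verify that $c_0\leq 1$ and $c_j\leq 2\rho^j$ for every $j\geq 1$, since these are precisely the coefficients appearing in $\|h\|_{\ell^1_\rho}$, which would finish the proof.

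The key observation is that the index sets arising from the aliasing formulas, namely $\{1,\ldots,K\}$, $\{2Kl\}_{l\geq 1}$, $\{2Kl\pm k\}_{l\geq 1,\ 1\leq k\leq K-1}$, and $\{(2l+1)K\}_{l\geq 1}$, together form a partition of $\N_{\geq 1}$. Consequently each $|h_j|$ with $j\geq 1$ appears in the pointwise bound with a single weight $2\rho^{w(j)}$, where $w(j)=j$ when $1\leq j\leq K$, $w(j)=0$ when $j=2Kl$ for some $l\geq 1$, $w(j)=k$ when $j=2Kl\pm k$ for some $l\geq 1$ and $1\leq k\leq K-1$, and $w(j)=K$ when $j=(2l+1)K$ for some $l\geq 1$. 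Similarly, only the $h_0$ term in $|\check h_0|$ contributes to the weight of $|h_0|$, so $c_0=1$.

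It then remains to check that $w(j)\leq j$ in each case, which implies $\rho^{w(j)}\leq \rho^j$ because $\rho\geq 1$. The ranges $1\leq j\leq K$, $j=2Kl\geq 2K$, $j=(2l+1)K\geq 3K$, and the aliased indices $j=2Kl+k\geq 2K+1$ are all immediate since $w(j)\leq K\leq j$. The one spot where I expect the bookkeeping to need a moment of care, and the main obstacle in the argument, is the remaining aliasing case $j=2Kl-k$ with $l\geq 1$ and $1\leq k\leq K-1$: here $\check h_k$ is weighted only by $\rho^k$, which looks dangerously smaller than the $\rho^j$ one needs, but the inequality $w(j)=k\leq j$ still goes through because $j=2Kl-k\geq 2K-(K-1)=K+1>k$. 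Once all these pointwise estimates are in place, summing $c_j|h_j|\leq 2\rho^j|h_j|$ over $j\geq 1$ together with $c_0|h_0|\leq |h_0|$ reproduces exactly $\|h\|_{\ell^1_\rho}$, which is the desired bound.
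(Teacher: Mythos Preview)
Your proof is correct and follows essentially the same approach as the paper, which simply states that the result is an immediate consequence of Lemma~\ref{lem:aliasing}. You have carefully unpacked what the paper considers immediate: the aliasing indices partition $\N_{\geq 1}$, and each aliased coefficient $|h_j|$ receives a weight $2\rho^{w(j)}$ with $w(j)\leq j$, so that $\rho\geq 1$ gives the bound.
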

	\begin{proof}
		This is an immediate consequence of Lemma~\ref{lem:aliasing}.
	\end{proof}
	
	\begin{remark}
		The above estimate is one of the reasons we conduct our fixed point argument using the $\ell^1_\rho$ norm rather than the $\CC^0_\rho$ norm. Indeed, in the latter norm, $\Pi^K h$ is only controlled by $h$ times a constant behaving roughly like $\rho^K$, which quickly becomes detrimental when $\rho$ is larger than $1$ (in many of our computer-assisted proofs we will use $\rho = 2$).
	\end{remark}

	\begin{proposition}
		\label{prop:interp_alpharho}
		Let $1\leq \rho\leq\nu$, and $h\in\ell^1_\nu$. Then
		\begin{equation*}
			\left\Vert h-\Pi^Kh \right\Vert_{\CC^0_\rho} \leq \Upsilon^{0,1}_{\rho,\nu,K} \left\Vert h  \right\Vert_{\ell^1_\nu},
		\end{equation*}
		where
		\begin{equation*}
			\Upsilon^{0,1}_{\rho,\nu,K} = \frac{1}{2} \frac{\rho^{K-1}+\rho^{-(K-1)}+\rho^{K+1}+\rho^{-(K+1)}}{\nu^{K+1}}.
		\end{equation*}
		Furthermore, if $h$ is even and $K$ is even,
		\begin{equation*}
			\left\Vert h-\Pi^Kh \right\Vert_{\CC^0_\rho} \leq \Upsilon^{0,1,\even}_{\rho,\nu,K} \left\Vert h  \right\Vert_{\ell^1_\nu},
		\end{equation*}
		where
		\begin{equation*}
			\Upsilon^{0,1,\even}_{\rho,\nu,K} = \frac{1}{2} \frac{\rho^{K-2}+\rho^{-(K-2)}+\rho^{K+2}+\rho^{-(K+2)}}{\nu^{K+2}}.
		\end{equation*} 
	\end{proposition}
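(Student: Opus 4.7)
The plan is to reduce the bound to a per-mode estimate for each Chebyshev monomial $T_n$ with $n > K$, and then exploit a factorization identity combined with a monotonicity argument. Since $\Pi^K$ is linear, continuous on $\ell^1_\rho$ (Lemma~\ref{lem:PiVSrho}), and fixes polynomials of degree at most $K$, splitting $h$ into its Chebyshev truncation of order $K$ plus the tail gives
\begin{align*}
h - \Pi^K h = 2\sum_{n > K} h_n \left(T_n - \Pi^K T_n\right).
\end{align*}
Applying Lemma~\ref{lem:aliasing} to $T_n$ itself (the element of $\ell^1_\rho$ with a single non-vanishing Chebyshev coefficient) identifies $\Pi^K T_n = T_{s(n)}$, where $s(n) \in \{0,1,\ldots,K\}$ is the unique integer with $n \equiv \pm s(n) \pmod{2K}$. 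The triangle inequality and Lemma~\ref{lem:Tk_ellipse} then yield
\begin{align*}
\left\Vert T_n - T_{s(n)}\right\Vert_{\CC^0_\rho} \leq \tfrac{1}{2}\left(\rho^n + \rho^{-n} + \rho^{s(n)} + \rho^{-s(n)}\right).
\end{align*}

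The algebraic step is to set $a(n) := (n - s(n))/2$ and $b(n) := (n + s(n))/2$, which are non-negative integers because $n$ and $s(n)$ share parity (as $2K$ is even). The identity $(\rho^a + \rho^{-a})(\rho^b + \rho^{-b}) = \rho^{a+b} + \rho^{-(a+b)} + \rho^{|a-b|} + \rho^{-|a-b|}$ then rewrites the per-mode bound as $\tfrac{1}{2}(\rho^{a(n)} + \rho^{-a(n)})(\rho^{b(n)} + \rho^{-b(n)})$. Dividing by $\nu^n$ and summing over $n > K$, the proposition reduces to showing that
\begin{align*}
F(a,b) := \frac{(\rho^a + \rho^{-a})(\rho^b + \rho^{-b})}{2\nu^{a+b}} \leq \Upsilon^{0,1}_{\rho,\nu,K}
\end{align*}
for every admissible pair $(a(n), b(n))$. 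Monotonicity of $F$ in each variable will follow from $\partial_a \log F = \log\rho \cdot \tanh(a\log\rho) - \log\nu \leq 0$ whenever $1 \leq \rho \leq \nu$ (using $\tanh \leq 1$), and symmetrically in $b$.

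It thus remains to identify the admissible pair with smallest components. A brief analysis of the representation $n = 2Kl \pm s(n)$ with $l \geq 1$ gives $b(n) = (n+s(n))/2 \geq K$ (the smallest occurrence is $l=1$, which forces $n + s(n) = 2K$) and $a(n) = (n-s(n))/2 \geq 1$ (from $n > K \geq s(n)$), with both bounds attained simultaneously only at $n = K+1$, $s(K+1) = K-1$, $(a,b) = (1, K)$. Monotonicity then yields $F(a(n), b(n)) \leq F(1, K) = \Upsilon^{0,1}_{\rho,\nu,K}$, proving the first bound. For the even case, evenness of $h$ combined with the fact that aliasing modulo the even number $2K$ preserves parity restricts the sum to even $n > K$, whose smallest element is $K + 2$ with $s(K+2) = K-2$ and $(a, b) = (2, K)$, so $F(a(n), b(n)) \leq F(2, K) = \Upsilon^{0,1,\even}_{\rho,\nu,K}$. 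The main obstacle will be the combined analytic-combinatorial step of locating $(1, K)$ (respectively $(2, K)$) as the extremal admissible pair; once the factorization of the aliased bound and the monotonicity of $F$ are in hand, the rest is routine bookkeeping with the $\ell^1_\nu$ norm.
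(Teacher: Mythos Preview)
Your argument is correct and arrives at exactly the same constants, but the route differs from the paper's in one notable way. The paper starts from the full aliasing expansion~\eqref{eq:interp_error}, groups the contributions according to the four index patterns $2Kl$, $2Kl\pm k$, $2Kl+K$, bounds each group separately, and is then left with a maximum over four values of a parameter $a\in\{-(K-1),0,1,K\}$ of the expression
\[
\frac{\rho^a+\rho^{-a}+\rho^{2K+a}+\rho^{-(2K+a)}}{\nu^{2K+a}},
\]
whose monotonicity in $a$ is checked by rewriting every term as a power of a quantity $\leq 1$. Your approach instead works mode-by-mode via $\Pi^K T_n=T_{s(n)}$ and then introduces the factorization
\[
\rho^n+\rho^{-n}+\rho^{s(n)}+\rho^{-s(n)}=(\rho^{a}+\rho^{-a})(\rho^{b}+\rho^{-b}),\qquad a=\tfrac{n-s(n)}{2},\ b=\tfrac{n+s(n)}{2},
\]
which the paper never makes explicit. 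This factorization is what buys you the clean two-variable monotonicity via $\partial_a\log F=\log\rho\cdot\tanh(a\log\rho)-\log\nu\leq 0$, and reduces the extremal analysis to locating the minimal admissible pair $(a,b)=(1,K)$ (respectively $(2,K)$). The paper's case analysis and your $(a,b)$ parametrization are really two presentations of the same underlying inequality (indeed, the paper's expression above is nothing but $2F(K+a,K)$ in your notation), but your version makes the structure more transparent and avoids the explicit four-case split.

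One minor point worth tightening in your write-up: in the even case you argue by looking only at the smallest even $n>K$, namely $n=K+2$. What you actually need (and what your earlier analysis already gives) is that \emph{every} even $n>K$ satisfies $a(n)\geq 2$ and $b(n)\geq K$; this follows because parity forces $s(n)$ even, hence $s(n)\leq K-2$ in the case $n=2K-s(n)$, while the case $n=2Kl+s(n)$ gives $a(n)=Kl\geq K$. With that made explicit, the monotonicity of $F$ finishes the job.
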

	\begin{proof}
		The starting point is Lemma~\ref{lem:aliasing}, which allows us to write the interpolation error as
		\begin{align}
			\label{eq:interp_error}
			h - \Pi^Kh & = \left(2\sum_{l=1}^\infty h_{2Kl}\right) T_0 + 2 \sum_{k=1}^{K-1} \left(\sum_{l=1}^\infty \left( h_{2Kl+k}  + h_{2Kl-k}\right)\right) T_k  + 2\left(\sum_{l=1}^\infty h_{2Kl+K}\right) T_K \nonumber \\
			&\quad + 2\sum_{k=K+1}^\infty h_k T_k.
		\end{align}
		Taking the $\CC^0_\rho$ norm and simply using Lemma~\ref{lem:Tk_ellipse} together with the triangular inequality, we get
		\begin{align}
			\label{eq:interp_error_C0}
			\left\Vert h - \Pi^Kh \right\Vert_{\CC^0_\rho} &\leq   2\sum_{l=1}^\infty \left\vert h_{2Kl}\right\vert  + \sum_{k=1}^{K-1} \sum_{l=1}^\infty \left( \left\vert h_{2Kl+k}\right\vert  + \left\vert h_{2Kl-k}\right\vert \right) \left(\rho^k+\rho^{-k}\right) \nonumber\\
			&\quad + \sum_{l=1}^\infty \left\vert h_{2Kl+K}\right\vert \left(\rho^K+\rho^{-K}\right) + \sum_{k=K+1}^\infty \left\vert h_k \right\vert \left(\rho^k+\rho^{-k}\right).
		\end{align}
		Reorganizing the terms sightly, and taking worst cases in $k$ and $l$, we end up with
		\begin{align*}
			\left\Vert h - \Pi^Kh \right\Vert_{\CC^0_\rho} & \leq   \frac{2+\rho^{2K}+\rho^{-2K}}{\nu^{2K}}\sum_{l=1}^\infty \left\vert h_{2Kl}\right\vert \nu^{2Kl}  \\
			&\quad + \frac{\rho+\rho^{-1}+\rho^{2K+1}+\rho^{-(2K+1)}}{\nu^{2K+1}} \sum_{k=1}^{K-1} \sum_{l=1}^\infty  \left\vert h_{2Kl+k}\right\vert \nu^{2Kl+k} \\
			&\quad + \frac{\rho^{K-1}+\rho^{-(K-1)}+\rho^{K+1}+\rho^{-(K+1)}}{\nu^{K+1}}\sum_{k=1}^{K-1} \sum_{l=1}^\infty  \left\vert h_{2Kl-k}\right\vert \nu^{2Kl-k} \\ 
			&\quad + \frac{\rho^{K}+\rho^{-K}+\rho^{3K}+\rho^{3K}}{\nu^{3K}} \sum_{l=1}^\infty \left\vert h_{2Kl+K}\right\vert \nu^{2Kl+K} \\
			&\leq \frac{1}{2}\max_{a\in\{-(K-1),0,1,K\}} \frac{\rho^a+\rho^{-a}+\rho^{2K+a}+\rho^{-(2K+a)}}{\nu^{2K+a}}  \left\Vert h  \right\Vert_{\ell^1_\nu}.
		\end{align*}
		Since $\rho\leq \nu$, the term 
		\begin{align*}
			\frac{\rho^a+\rho^{-a}+\rho^{2K+a}+\rho^{-(2K+a)}}{\nu^{2K+a}} = \frac{1}{\nu^{2K}}\left(\left(\frac{\rho}{\nu}\right)^a + \left(\frac{1}{\rho\nu}\right)^a\right) + \left(\frac{\rho}{\nu}\right)^{2K+a} + \left(\frac{1}{\rho\nu}\right)^{2K+a}
		\end{align*}
		is non-increasing with $a$, hence the maximum over $a$ is reached for $a=-(K-1)$. 
		
		When $h$ is even, only the $h_k$ with $k$ even remain in the above computation. If $K$ is also even, the worst term (which is the factor in front of $h_{2Kl-k}$ for $l=1$ and $k=K-1$) drops out, and the next worst one (in front of $h_{2Kl-k}$ for $l=1$ and $k=K-2$) give the announced constant.
	\end{proof}
	
	\begin{proposition}
		\label{prop:interp_alpharho0}
		Let $1\leq \rho<\nu$, and $h$ an analytic function on $\E_\nu$. Then
		\begin{equation*}
			\left\Vert h-\Pi^Kh \right\Vert_{\ell^1_\rho} \leq \Upsilon^{1,0}_{\rho,\nu,K} \left\Vert h  \right\Vert_{\CC^0_\nu},
		\end{equation*}
		where
		\begin{equation*}
			\Upsilon^{1,0}_{\rho,\nu,K}=\frac{2}{\nu^{2K}-1}\left(\frac{\rho}{\nu-\rho}\left(1-\left(\frac{\rho}{\nu}\right)^K\right)+\frac{(\nu\rho)^K-1}{\nu\rho-1}\right)+\frac{2\rho}{\nu-\rho}\left(\frac{\rho}{\nu}\right)^K,
		\end{equation*}
		Furthermore, if $h$ is even and $K$ is even,
		\begin{equation*}
			\left\Vert h-\Pi^Kh \right\Vert_{\ell^1_\rho} \leq \Upsilon^{1,0,\even}_{\rho,\nu,K} \left\Vert h  \right\Vert_{\CC^0_\nu},
		\end{equation*}
		where
		\begin{equation*}
			\Upsilon^{1,0,\even}_{\rho,\nu,K}=\frac{2}{\nu^{2K}-1}\left(\frac{\rho^2}{\nu^2-\rho^2}\left(1-\left(\frac{\rho}{\nu}\right)^K\right)+\frac{(\nu\rho)^K-1}{(\nu\rho)^2-1}\right)+\frac{2\rho^2}{\nu^2-\rho^2}\left(\frac{\rho}{\nu}\right)^K.
		\end{equation*}
	\end{proposition}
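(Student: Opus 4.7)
The strategy is exactly parallel to the proof of Proposition~\ref{prop:interp_alpharho}: start from the explicit aliasing decomposition~\eqref{eq:interp_error} of the interpolation error, but now take its $\ell^1_\rho$ norm (so that each mode $T_k$ contributes a factor $\rho^k$ rather than $(\rho^k+\rho^{-k})/2$), and then bound each individual Chebyshev coefficient $|h_k|$ via Lemma~\ref{lem:decay_fk}, namely $|h_k|\leq \|h\|_{\CC^0_\nu}/\nu^k$. The analyticity of $h$ on $\E_\nu$ is what makes this coefficient bound available, so there is no circular use of $\|h\|_{\ell^1_\nu}$ (which is actually not assumed finite here).

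After applying the triangle inequality and this pointwise coefficient bound, and performing the geometric sum in $l$ first for the aliasing contributions (which produces a common factor $1/(\nu^{2K}-1)$), one is left with three geometric sums in $k$: $\sum_{k=1}^{K-1}(\rho/\nu)^k$ coming from the $h_{2Kl+k}$ part, $\sum_{k=1}^{K-1}(\rho\nu)^k$ coming from the $h_{2Kl-k}$ part, and the genuine tail $\sum_{k\geq K+1}(\rho/\nu)^k$. The final step is purely algebraic regrouping. The $T_0$ contribution absorbs into the $(\rho\nu)^k$ sum to produce $\sum_{k=0}^{K-1}(\nu\rho)^k = ((\nu\rho)^K-1)/(\nu\rho-1)$; the $T_K$ contribution absorbs into the $(\rho/\nu)^k$ sum to produce $\sum_{k=1}^{K}(\rho/\nu)^k = \rho(1-(\rho/\nu)^K)/(\nu-\rho)$, which together make up the expression inside the big parenthesis in $\Upsilon^{1,0}_{\rho,\nu,K}$; and the genuine tail closes as $\sum_{k\geq K+1}(\rho/\nu)^k = (\rho/(\nu-\rho))(\rho/\nu)^K$, giving the last term.

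For the even case with $K$ even, only $h_k$ with even $k$ survive. Accordingly, $\sum_{k=1}^{K-1}$ restricts to $\sum_{k=2,\,k\text{ even}}^{K-2}$ and the genuine tail to $\sum_{k\geq K+2,\,k\text{ even}}$; the resulting geometric series in ratios $(\rho/\nu)^2$ and $(\rho\nu)^2$ yield exactly the factors $\rho^2/(\nu^2-\rho^2)$ and $((\rho\nu)^K-1)/((\rho\nu)^2-1)$ that appear in $\Upsilon^{1,0,\even}_{\rho,\nu,K}$, together with the shift of the tail from $(\rho/\nu)^{K+1}$ to $(\rho/\nu)^{K+2}$.

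There is no genuinely analytic obstacle here, since Lemma~\ref{lem:aliasing} and Lemma~\ref{lem:decay_fk} already carry the weight of the argument. The only delicate point is bookkeeping: making sure that the $T_0$ and $T_K$ boundary contributions are combined with the correct one of the two inner sums (the $(\rho\nu)^k$ one for $T_0$, the $(\rho/\nu)^k$ one for $T_K$) so that the resulting closed forms match the stated expressions for $\Upsilon^{1,0}_{\rho,\nu,K}$ and $\Upsilon^{1,0,\even}_{\rho,\nu,K}$ exactly, rather than merely up to a constant.
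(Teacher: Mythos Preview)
Your proposal is correct and follows exactly the same route as the paper's own proof: start from the aliasing formula~\eqref{eq:interp_error}, replace the weight $(\rho^k+\rho^{-k})/2$ by $\rho^k$ when passing to the $\ell^1_\rho$ norm, bound each $|h_k|$ by $\|h\|_{\CC^0_\nu}/\nu^k$ via Lemma~\ref{lem:decay_fk}, and sum the resulting geometric series. Your bookkeeping for merging the $T_0$ and $T_K$ boundary contributions into the two inner sums is accurate and indeed yields the stated constants; the paper itself omits this algebra and simply asserts that it is ``just a matter of putting together the sums of all those geometric series''.
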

	\begin{proof}
		The starting point is again to write the interpolation error as~\eqref{eq:interp_error}, and then to take the $\ell^1_\rho$ norm instead of the $\CC^0_\rho$ norm, which simply means each $\frac{\rho^k+\rho^{-k}}{2}$ should be replaced by $\rho^k$  in~\eqref{eq:interp_error_C0}. Next, we estimate each $\left\vert h_k\right\vert$ using the first part of Lemma~\ref{lem:decay_fk} and the fact that $h$ is analytic on $\E_\nu$, which yields
		\begin{align*}
			&\left\Vert h - \Pi^Kh \right\Vert_{\ell^1_\rho}   \\
			& \quad \leq  \left( 2\sum_{l=1}^\infty \frac{1}{\nu^{2Kl}}  + 2 \sum_{k=1}^{K-1} \sum_{l=1}^\infty \left( \frac{\rho^k}{\nu^{2Kl+k}}  + \frac{\rho^k}{\nu^{2Kl-k}} \right)  + 2\sum_{l=1}^\infty \frac{\rho^K}{\nu^{2Kl+K}} + 2\sum_{k=K+1}^\infty \frac{\rho^k}{\nu^{k}} \right) \left\Vert h \right\Vert_{\CC^0_\nu},
		\end{align*}
		and obtaining the formula for $\Upsilon^{1,0}_{\rho,\nu,K}$ is just a matter of putting together the sums of all those geometric series. Also using that only the terms with even indices remain when $h$ is even yields the second constant.
	\end{proof}

	\subsection{Practical considerations}
	
	We make here several remarks, related to the way we actually use some of the theoretical estimates presented in the previous two subsections in practice.
	
	\begin{remark}
		\label{rem:2ways}
		In various places below, we need to compute or estimate the coefficients of $\Pi^K g$ for some function $g$. If $g$ is explicit enough, we can compute each entry of the vector $\left(g(x_k)\right)_{0\leq k\leq K}$ and then get the coefficients of $\Pi^K g$ using the DFT (or more precisely, the Discrete Cosine Transform, see Appendix~\ref{sec:DCT}). Similarly, if we are only able to get component-wise upper bounds for $\left(\left\vert g(x_k) \right\vert \right)_{0\leq k\leq K}$, we get upper-bounds for the coefficients of $\Pi^K g$. While this is usually fine for moderately large values of $k$, when $k$ becomes large such estimates may fail to capture the expected decay of the coefficients, which can be problematic when $\rho$ is somewhat larger than $1$, for instance if we need to compute or estimate $\left\Vert  \Pi^K g \right\Vert_{\ell^1_\rho}$.
		On way to remedy this is to also compute or estimate $\left\Vert g\right\Vert_{\CC^0_{\nu}}$ for some $\nu\geq \rho$, and then use the second part of Lemma~\ref{lem:decay_fk} to get an estimate on the coefficients of $\Pi^K g$ with a guaranteed decay at a rate $\nu^{-k}$. For each coefficient, we can then take the minimum between the estimate obtained via the values at the nodes, and the estimate obtained form the $\CC^0_\nu$ norm. This strategy is reminiscent of the one presented in~\cite{Les18bis}.
	\end{remark}

	\begin{remark}
		Another recurring task will be to compute quantities like $\left\Vert g \right\Vert_{\CC^0_\nu}$, where $g$ is some analytic function on $\E_\nu$ (usually a polynomial). It is worth noticing that we in fact only need to compute the supremum of $\vert g\vert$ on the boundary of $\E_\nu$, because the maximum is necessarily reached there thanks to the maximum modulus principle. If we have access to the Chebyshev coefficients of a polynomial $g$, an efficient way of rigorously enclosing this supremum using interval arithmetic together with the FFT, coming from~\cite{BerBreLesMir24} (see also~\cite{HarSan24}), is recalled in Appendix~\ref{sec:intervalFFT}. Unfortunately, such an algorithm is of little use when $g$ is the composition of several polynomials, and we only know the coefficients of the individual pieces. In that case, we instead adaptively subdivide the boundary of $\E_\nu$ into small pieces, and directly evaluate over these pieces with interval arithmetic to get an enclosure of the supremum of $\vert g\vert$.
	\end{remark}
	
	\begin{remark}
		\label{rem:opt}
		Many estimates to come will be of the form $C_\nu \left\Vert g \right\Vert_{\CC^0_\nu}$, for some function $g$ and $\nu>1$, where $C_\nu$ is a computable constant depending on $\nu$. Whenever we face such a quantity, we numerically optimize the value of $\nu$ in order to make this as small as possible.
	\end{remark}
	
	In order to use such estimates, we need to know that g is actually analytic on some Bernstein ellipse of explicit size. The following Lemma can prove useful in determining (lower bounds on) domains of analytically.
	
	\begin{lemma}
		\label{lem:inclusion_fE}
		Let $\nu\geq 1$, $\psi$ analytic on $\E_\nu$, and
		\begin{align*}
			\eta = \max_{z\in \partial\E_\nu} \vert \psi(z)-1\vert + \vert \psi(z)+1\vert.
		\end{align*}
		Then, for all $\rho\geq\frac{\eta+\sqrt{\eta^2-4}}{2}$, $\psi(\E_\nu)\subset \E_\rho$.
	\end{lemma}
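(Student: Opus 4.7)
My plan is to view the Bernstein ellipse as a sub-level set of a subharmonic function, so that the maximum modulus principle applies directly to the composition with $\psi$.

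The key geometric fact is that $\E_\rho = \{w \in \C : |w-1|+|w+1| \le \rho + \rho^{-1}\}$. To check this, I would parametrize $w = \tfrac{1}{2}(z+z^{-1})$ and observe
\[
w+1 = \frac{(z+1)^2}{2z}, \qquad w-1 = \frac{(z-1)^2}{2z},
\]
so that $|w+1| + |w-1| = (|z+1|^2 + |z-1|^2)/(2|z|) = |z|+|z|^{-1}$ (using $|z+1|^2+|z-1|^2 = 2(|z|^2+1)$). Since $r \mapsto r + r^{-1}$ is increasing on $[1,\infty)$, and for any $w \in \C$ the equation $z^2 - 2wz + 1 = 0$ has reciprocal roots so that exactly one root satisfies $|z| \ge 1$, the characterization follows: $w \in \E_\rho$ iff $|w-1|+|w+1| \le \rho + \rho^{-1}$.

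Next, since $\psi$ is analytic on $\E_\nu$, both $\psi - 1$ and $\psi + 1$ are holomorphic, hence $z \mapsto |\psi(z) - 1|$ and $z \mapsto |\psi(z)+1|$ are subharmonic on the interior of $\E_\nu$ and continuous up to the boundary. Their sum is therefore subharmonic, and the maximum modulus principle gives
\[
\max_{z \in \E_\nu} \bigl(|\psi(z)-1| + |\psi(z)+1|\bigr) = \max_{z \in \partial\E_\nu} \bigl(|\psi(z)-1| + |\psi(z)+1|\bigr) = \eta.
\]

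Finally, the condition $\rho + \rho^{-1} \ge \eta$ is equivalent (for $\rho > 0$) to $\rho^2 - \eta\rho + 1 \ge 0$. Since $|\psi(z)-1|+|\psi(z)+1| \ge |(\psi(z)+1)-(\psi(z)-1)| = 2$, one has $\eta \ge 2$, so the discriminant $\eta^2-4$ is nonnegative, and the polynomial is nonnegative precisely when $\rho \ge \frac{\eta+\sqrt{\eta^2-4}}{2}$ or $\rho \le \frac{\eta-\sqrt{\eta^2-4}}{2}$. The hypothesis places us in the first case, giving $\rho + \rho^{-1} \ge \eta$. Combining, for every $z \in \E_\nu$,
\[
|\psi(z)-1| + |\psi(z)+1| \le \eta \le \rho + \rho^{-1},
\]
which by the geometric characterization means $\psi(z) \in \E_\rho$. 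The only step requiring any insight is identifying the correct ``ellipse-defining'' function; everything else is a short calculation, so I do not foresee a substantive obstacle.
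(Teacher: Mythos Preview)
Your proof is correct; the paper states this lemma without proof, and your argument supplies exactly the two ingredients one would expect: the characterization of $\E_\rho$ as the sub-level set $\{w:|w-1|+|w+1|\le \rho+\rho^{-1}\}$, and the maximum principle applied to the subharmonic function $|\psi-1|+|\psi+1|$. The computation $|w+1|+|w-1|=|z|+|z|^{-1}$ via $w\pm 1=(z\pm 1)^2/(2z)$ is clean, and the reduction of $\rho+\rho^{-1}\ge\eta$ to the stated quadratic inequality is straightforward.
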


	\section{Fixed point reformulation}\label{sec:fixed_point}
	In the remainder of the paper, $K$ denotes a positive even integer, and $(\ba,\bh)$ an element of $\X^K_\rho$, which should be thought of as an approximate zero of $\Phi_m$ (recall equation~\eqref{eq:Phi_m}), obtained numerically, satisfying $0<\ba<1$. Our goal is to prove the existence of an exact zero of $\Phi_m$ (i.e., of an exact fixed point of $R_m$) near $(\ba,\bh)$, and to provide an explicit and small error bound.
	
	We define
	\begin{equation*}
		J^\dag= \Pi^K \restriction{D\Phi_m(\ba,\bh)}{\X_\rho^K},
	\end{equation*}
	which is a linear operator on $\X_\rho^K$.
	
	Since $\X_\rho^K$ is finite dimensional (of dimension $K/2+1$), we can compute numerically an approximate inverse $J$ of $J^\dag$. Finally, we define the linear operator $A$ by
	\begin{equation*}
		\left\{
		\begin{aligned}
			&A\, \Pi^K (\alpha,h) = J\, \Pi^K(\alpha,h) \\
			&A\, (I-\Pi^K) (\alpha,h) = \left(0,\, \frac{1}{\ba}\,  (I-\Pi^K)h \right).
		\end{aligned}
		\right.
	\end{equation*}
	This leads to the fixed-point operator
	\begin{align}
	\label{eq:defT}
		T: (\alpha,h)\mapsto (\alpha,h) - A\, \Phi_m(\alpha,h),
	\end{align}
	and our goal is now to prove that $T$ is a contraction on a small neighborhood of $(\ba,\bh)$ in $\X_\rho$. This will be accomplished thanks to a Newton-Kantorovich-type argument, which has become very common for computer-assisted proofs~\cite{AriKocTer05,BerBreLesVee21,DayLesMis07,Plu92,Yam98}.

	\begin{theorem}
		\label{th:fixed_point}
		Let $r^*\in(0,+\infty]$. Assume there exist nonnegative constants $Y$ and $Z$ such that
		\begin{subequations}
			\label{eq:YZ}
			\begin{align}
				\left\Vert A\Phi_m(\ba,\bh) \right\Vert_{\X_\rho} &\leq Y \label{eq:Y}\\
				\sup_{(\alpha,h)\in\B_{\X_\rho}\left((\ba,\bh),r^*\right)}\left\Vert 
				I - AD\Phi_m(\alpha,h) \right\Vert_{\X_\rho}  &\leq Z, \label{eq:Z}
			\end{align}
		\end{subequations}
		where $\B_{\X_\rho}\left((\ba,\bh),r^*\right)$ is the closed ball of center $(\ba,\bh)$ and radius $r^*$ in $\X_\rho$.
		If
		\begin{align}
			Z &< 1, \label{eq:cond1}
		\end{align}
		then, for any $r$ satisfying 
		\begin{align}
			\label{eq:r}
			\frac{Y}{1-Z} \leq r \leq r^*,
		\end{align}
		there exists a unique $(\alpha_*,h_*) \in  \B_{\X_\rho}\left((\ba,\bh),r \right)$
		so that 
		\[
		\Phi_m(\alpha_*, h_*) = 
		\left(
		\begin{array}{c}
		0 \\
		\mathbf{0}
		\end{array}
		\right).
		\]
	\end{theorem}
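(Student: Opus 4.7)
The plan is to reduce this to a Banach fixed-point argument for $T$ on the closed ball $\mathcal{B} := \mathcal{B}_{\mathcal{X}_\rho}((\bar{\alpha},\bar{h}),r)$, and then to recover the corresponding zero of $\Phi_m$ by invoking the injectivity of $A$. The key identity, obtained from the fundamental theorem of calculus applied to the $\mathcal{C}^1$ map $\Phi_m$ along line segments in $\mathcal{X}_\rho$, is
\begin{equation*}
\Phi_m(\alpha,h) - \Phi_m(\tilde\alpha,\tilde h) = \int_0^1 D\Phi_m\bigl((\tilde\alpha,\tilde h) + s((\alpha,h)-(\tilde\alpha,\tilde h))\bigr)\,ds\ \bigl((\alpha,h)-(\tilde\alpha,\tilde h)\bigr).
\end{equation*}

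First I would establish that $T$ maps $\mathcal{B}$ into itself. For any $(\alpha,h)\in\mathcal{B}$, writing $T(\alpha,h)-(\bar{\alpha},\bar{h}) = -A\Phi_m(\bar{\alpha},\bar{h}) + \bigl((\alpha,h)-(\bar{\alpha},\bar{h}) - A(\Phi_m(\alpha,h)-\Phi_m(\bar{\alpha},\bar{h}))\bigr)$ and applying the identity above with $(\tilde\alpha,\tilde h)=(\bar{\alpha},\bar{h})$ gives
\begin{equation*}
T(\alpha,h)-(\bar{\alpha},\bar{h}) = -A\Phi_m(\bar{\alpha},\bar{h}) + \int_0^1 \bigl(I - AD\Phi_m(\bar{\alpha}+s(\alpha-\bar{\alpha}),\bar{h}+s(h-\bar{h}))\bigr)ds\ \bigl((\alpha,h)-(\bar{\alpha},\bar{h})\bigr).
\end{equation*}
Since the line segment is contained in $\mathcal{B}$, the bounds \eqref{eq:Y} and \eqref{eq:Z} together with the triangle inequality yield $\|T(\alpha,h)-(\bar{\alpha},\bar{h})\|_{\mathcal{X}_\rho} \leq Y + Zr \leq r$, where the last inequality is \eqref{eq:r}.

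Next I would establish contractivity. For $(\alpha_1,h_1),(\alpha_2,h_2)\in\mathcal{B}$, the same identity (applied between these two points, with the segment still in $\mathcal{B}$ by convexity) gives
\begin{equation*}
T(\alpha_1,h_1) - T(\alpha_2,h_2) = \int_0^1 \bigl(I - AD\Phi_m(\alpha_2+s(\alpha_1-\alpha_2),h_2+s(h_1-h_2))\bigr)ds\ \bigl((\alpha_1,h_1)-(\alpha_2,h_2)\bigr),
\end{equation*}
so $\|T(\alpha_1,h_1)-T(\alpha_2,h_2)\|_{\mathcal{X}_\rho} \leq Z\|(\alpha_1,h_1)-(\alpha_2,h_2)\|_{\mathcal{X}_\rho}$. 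Combined with \eqref{eq:cond1}, Banach's fixed point theorem then produces a unique $(\alpha_*,h_*)\in\mathcal{B}$ with $T(\alpha_*,h_*)=(\alpha_*,h_*)$, equivalently $A\Phi_m(\alpha_*,h_*)=0$.

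The step I expect to require the most care is the passage from $A\Phi_m(\alpha_*,h_*) = 0$ to $\Phi_m(\alpha_*,h_*)=0$, since $A$ was constructed piecewise on $\Pi^K\mathcal{X}_\rho$ and $(I-\Pi^K)\mathcal{X}_\rho$ and is only guaranteed to approximately invert $D\Phi_m$. On the infinite tail $(I-\Pi^K)\mathcal{X}_\rho$, $A$ acts as multiplication by $1/\bar{\alpha}\neq 0$, hence is trivially injective there. On the finite-dimensional piece $\mathcal{X}_\rho^K$, $A$ acts as the matrix $J$; injectivity of $J$ follows from \eqref{eq:cond1} because $\|I-AD\Phi_m(\bar{\alpha},\bar{h})\|\leq Z<1$ forces $AD\Phi_m(\bar{\alpha},\bar{h})$ (and in particular its finite-dimensional restriction $JJ^\dagger$) to be invertible via Neumann series, so $J$ must itself be invertible. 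Combining the two pieces, $A$ is injective on $\mathcal{X}_\rho$, which yields $\Phi_m(\alpha_*,h_*)=0$ and completes the uniqueness statement in the ball of radius $r$.
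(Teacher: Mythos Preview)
Your proof is correct and follows the standard Newton--Kantorovich contraction argument. The paper itself does not give a proof of Theorem~\ref{th:fixed_point}; it states the result as a by-now-standard tool in computer-assisted proofs, with references to~\cite{AriKocTer05,BerBreLesVee21,DayLesMis07,Plu92,Yam98}. Your argument is precisely the one underlying those references.

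One small point of care in your injectivity step: the sentence ``$AD\Phi_m(\bar\alpha,\bar h)$ (and in particular its finite-dimensional restriction $JJ^\dagger$) to be invertible'' is slightly loose, since invertibility of an operator does not in general pass to compressions. The cleanest way to finish is to note that $A$ is block-diagonal with respect to the splitting $\X_\rho = \X_\rho^K \oplus \Pi^\infty\X_\rho$, with blocks $J$ and $\frac{1}{\bar\alpha}I$. Since $AD\Phi_m(\bar\alpha,\bar h)$ is invertible (Neumann series from $Z<1$), $A$ is surjective; the tail block being already bijective forces $J$ to be surjective on the finite-dimensional space $\X_\rho^K$, hence invertible. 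Alternatively, using $\Pi^K A = J\Pi^K$ and $\Vert\Pi^K\Vert\leq 1$ (Lemma~\ref{lem:PiVSrho}) gives $\Vert I_K - JJ^\dagger\Vert = \Vert \Pi^K(I-AD\Phi_m(\bar\alpha,\bar h))\Pi^K\Vert \leq Z <1$ directly. Either route closes the argument.
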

	\begin{remark}
		\label{rem:fixed_point}
		Often in the literature, the above $Z$ estimate is considered for an arbitrary $r<r^*$ and split into two parts:
		\begin{align*}
			\left\Vert I - AD\Phi_m(\ba,\bh) \right\Vert_{\X_\rho} &\leq Z_1 \\
			\sup_{(\alpha,h)\in\B_{\X_\rho}\left((\ba,\bh),r^*\right)}  \left\Vert AD^2\Phi_m(\alpha,h)
			 \right\Vert_{\X_\rho} & \leq Z_2,
		\end{align*}
		so that
		\begin{align*}
			\sup_{(\alpha,h)\in\B_{\X_\rho}\left((\ba,\bh),r\right)} \left\Vert 
			I - AD\Phi_m(\alpha,h) \right\Vert_{\X_\rho} \leq Z_1 + Z_2  r,
		\end{align*}
		This allows to isolate the crucial part, namely $I - AD\Phi_m(\ba,\bh)$, and to estimate it as sharply as possible, because the derivative is now taken at a fixed and explicit point, but it then requires to also control locally the second derivative (or at least to get a Lipschitz bound on the first derivative locally). For our specific problem, where $\Phi_m$ contains a composition operator, looking at higher order derivatives means getting more and more complicated formulas, which we avoid by directly working with~\eqref{eq:Z}. The supremum over $\B_{\X_\rho}\left((\ba,\bh),r^*\right)$ is then handled directly by using interval arithmetic in combination with some a priori error estimates (see Section~\ref{sec:Z}). The downside of this approach is that we get less sharp bounds, but how less sharp they are really depends on the choice of $r^*$. In practice, we do take $r^*$ really small (see Section~\ref{sec:results} for explicit values), which alleviates this drawback.  The main cost 
		of the maneuver is that we do then absolutely need to get rather sharp $Y$ bounds: if $Y>r^*$, then there are no $r$ satisfying~\eqref{eq:r}. Another byproduct of this choice, which is of no real consequence for this work, it that we only obtain isolation results on the (now very small)
		neighborhood of size $r_*$.
	\end{remark}

	\section{Bounds in Theorem~\ref{th:fixed_point}}
	\label{sec:bounds}
	
	In this section, we derive computable estimates satisfying assumption~\eqref{eq:YZ} of Theorem~\ref{th:fixed_point}. We start by introducing notation for the terms appearing in the Frechet derivative of $\phi_m$ (recall equation~\eqref{eq:phi_m}), and then derive separately a $Y$ bound and a $Z$ bound.
	
	For any $(\ta,\th)$ and $(\alpha,h)$ in $\X_\rho$,
	
	\begin{equation}
		\label{eq:dphidalpha}
		\partial_{\alpha}\phi(\ta,\th)\alpha= \alpha(
		\th-\tf ),
	\end{equation} 
	where
	\begin{align*}
		\tf(x) = x\prod_{j=0}^{m-1} \th'\left(\th^j(\ta x)\right),
	\end{align*}
	and
	\begin{equation}
		\label{eq:dphidh}
		\partial_{h}\phi(\ta,\th)h (x)   = \ta h(x) - \sum_{j=0}^{m-1} \tg_j(x)  h\left(\th^{j}(\ta x)\right),
	\end{equation}
	where
	\begin{align*}
		\tg_j(x) = \prod_{l=j+1}^{m-1} \th'\left(\th^l(\ta x) \right),\quad j=0,\ldots,m-1, 
	\end{align*}
	with the convention that the empty product is equal to $1$, i.e. $\tg_{m-1} = 1$.

\subsection[Y estimate in \eqref{eq:Y}]{$\bm{Y}$ estimate in \eqref{eq:Y}}
\label{sec:Y}
	
	This subsection is devoted to the estimate $Y$ satisfying~\eqref{eq:Y}. We split $A\Phi_m(\ba,\bh)$ as $$\Pi^K(A\Phi_m(\ba,\bh)) + (I-\Pi^K)(A\Phi_m(\ba,\bh)),$$ and estimate both terms separately. The bounds for the terms are denoted by $Y^K$ and $Y^{\infty}$ respectively, and sometimes referred to as the \emph{finite part} and the \emph{tail part}.
	
	For the finite part, we simply take
	\begin{equation*}
		Y^K=\left\Vert  J\, \Pi^K \Phi_m(\ba,\bh) \right\Vert_{\X_\rho}.
	\end{equation*}
	That is, we compute the coefficients of $\Pi^K \Phi_m(\ba,\bh)$ using Remark~\ref{rem:2ways}, and then simply multiply the result by $J$ and compute the $\ell^1_\rho$ norm of the result.

For the tail part, we have to estimate 
	\begin{align}
	\label{eq:Yinf_startingpoint}
		\frac{1}{\vert\ba\vert}\left\Vert  \left(I-\Pi^K\right) \Phi_m(\ba,\bh) \right\Vert_{\X_\rho} = \frac{1}{\vert\ba\vert}\left\Vert  \left(I-\Pi^K\right) \left(\ba\bh - \bh^m (\ba\cdot)\right) \right\Vert_{\ell^1_\rho}.
	\end{align}
Since $\ba\bh - \bh^m (\ba\cdot)$ is itself a polynomial, in principle once should be able to compute its coefficients exactly, and then to exactly evaluate the r.h.s. of~\eqref{eq:Yinf_startingpoint}. However, $\ba\bh - \bh^m (\ba\cdot)$ is of very large degree ($K^m$), therefore computing its coefficients accurately enough can be very challenging in practice. In particular, if the magnitude of the obtained coefficients start plateauing around machine epsilon, the resulting $\ell^1_\rho$ norm could become extremely large if $\rho>1$, which is the case here.
In order to alleviate this difficulty, and estimate the r.h.s. of~\eqref{eq:Yinf_startingpoint} as sharply as possible, we will split it into two parts: one which should be \emph{close} to $\left(I-\Pi^K\right) \left(\ba\bh - \bh^m (\ba\cdot)\right)$ but that we can compute \emph{accurately}, and a second part we we can only estimate but which should hopefully be of relatively small magnitude compared to the first part. More concretely, we consider an integer $K_Y$ and use the triangle inequality
\begin{align*}
		\frac{1}{\vert\ba\vert}\left\Vert  \left(I-\Pi^K\right) \left(\ba\bh - \bh^m (\ba\cdot)\right) \right\Vert_{\ell^1_\rho} &\leq 
		\frac{1}{\vert\ba\vert}\left\Vert  \Pi^{K_Y} \left(\ba\bh - \bh^m (\ba\cdot)\right) - \Pi^{K} \left(\ba\bh - \bh^m (\ba\cdot)\right) \right\Vert_{\ell^1_\rho} \\
		&\quad  + \frac{1}{\vert\ba\vert}\left\Vert  \left(I-\Pi^{K_Y}\right) \left(\ba\bh - \bh^m (\ba\cdot)\right) \right\Vert_{\ell^1_\rho}.
	\end{align*}
In practice, $K_Y$ should be chosen larger than $K$, so that $\Pi^{K_Y} \left(\ba\bh - \bh^m (\ba\cdot)\right)$ approximates $\ba\bh - \bh^m (\ba\cdot)$ well, but not too large so that the coefficients of $\Pi^{K_Y} \left(\ba\bh - \bh^m (\ba\cdot)\right)$ can still be computed accurately. The interpolation error $\left(I-\Pi^{K_Y}\right)$ is then estimated using Proposition~\ref{prop:interp_alpharho0}, which yields
\begin{align*}
		\frac{1}{\vert\ba\vert}\left\Vert  \left(I-\Pi^K\right) \left(\ba\bh - \bh^m (\ba\cdot)\right) \right\Vert_{\ell^1_\rho} &\leq 
		\frac{1}{\vert\ba\vert}\left\Vert  \Pi^{K_Y} \left(\ba\bh - \bh^m (\ba\cdot)\right) - \Pi^{K} \left(\ba\bh - \bh^m (\ba\cdot)\right) \right\Vert_{\ell^1_\rho} \\
		&\quad  + \frac{\Upsilon^{1,0,\even}_{\rho,\nu,K_Y}}{\vert\ba\vert} \left\Vert \ba\bh - \bh^m (\ba\cdot) \right\Vert_{\CC^0_{\nu}} := Y^{\infty},
	\end{align*}
for some $\nu>\rho$ chosen according to Remark~\ref{rem:opt}. Note that $K_Y>K$ makes the constant $\Upsilon^{1,0,\even}_{\rho,\nu,K_Y}$ smaller than the $\Upsilon^{1,0,\even}_{\rho,\nu,K}$ that would have appeared if we had use Proposition~\ref{prop:interp_alpharho0} directly on~\eqref{eq:Yinf_startingpoint}, and therefore the second term in $Y^{\infty}$ should in principle be small compared to $\frac{1}{\vert\ba\vert}\left\Vert  \Pi^{K_Y} \left(\ba\bh - \bh^m (\ba\cdot)\right) - \Pi^{K} \left(\ba\bh - \bh^m (\ba\cdot)\right) \right\Vert_{\ell^1_\rho}$, which we can just compute.
	
	\subsection[Z estimate in \eqref{eq:Z}]{$\bm{Z}$ estimate in \eqref{eq:Z}}
	\label{sec:Z}
	
	This subsection is devoted to the estimate $Z$ satisfying~\eqref{eq:Z}, which will be split in three parts:
	\begin{align}
		\sup_{(\ta,\th)\in B_{\X_\rho}\left((\ba,\bh),r^*\right)} \left\Vert DT(\ta,\th) \right\Vert_{\X_\rho} &= \sup_{(\ta,\th)\in B_{\X_\rho}\left((\ba,\bh),r^*\right)} \left\Vert I - AD\Phi_m(\ta,\th)\right\Vert_{\X_\rho} \nonumber\\
		&\leq \sup_{(\ta,\th)\in B_{\X_\rho}\left((\ba,\bh),r^*\right)} \left\Vert \Pi^{K}\left(I - AD\Phi_m(\ta,\th)\right)\Pi^K\right\Vert_{\X_\rho} \label{eq:Z_KK}\\
		&\quad + \sup_{(\ta,\th)\in B_{\X_\rho}\left((\ba,\bh),r^*\right)} \left\Vert \Pi^{K}\left(I - AD\Phi_m(\ta,\th)\right)\Pi^\infty\right\Vert_{\X_\rho} \label{eq:Z_Kinf}\\
		&\quad + \sup_{(\ta,\th)\in B_{\X_\rho}\left((\ba,\bh),r^*\right)} \left\Vert \Pi^{\infty}\left(I - AD\Phi_m(\ta,\th)\right)\right\Vert_{\X_\rho}. \label{eq:Z_inf}
	\end{align}
	\begin{remark}
		In principle, one might optimize the efficiency of the whole procedure by using here and in the definition of $A$ a $K$ which is different (typically smaller) than the $K$ used for obtaining the numerical solution, but we will not do so in this work.
	\end{remark}
	
	\subsubsection{Dealing with~\eqref{eq:Z_KK}}
	
	The bound derived in this section for \eqref{eq:Z_KK} will be denoted by $Z^{K,K}$ when reporting numerical values or in the code. 
	
	For a given $(\ta,\th)\in B_{\X_\rho}\left((\ba,\bh),r^*\right)$ we have that
	\begin{align*}
		\left\Vert \Pi^{K}\left(I - AD\Phi_m(\ta,\th)\right)\Pi^K\right\Vert_{\X_\rho} &= \left\Vert I_K - J\Pi^{K}D\Phi_m(\ta,\th)\Pi^K\right\Vert_{\X_\rho},
	\end{align*}
	where $I_K$ is the identity operator on $\X_\rho^K$. Therefore, we merely have to compute the norm of a finite dimensional operator, the only slight difficulty being than $(\ta,\th)$ are arbitrary elements in $\B_{\X_\rho}\left((\ba,\bh),r^*\right)$. We deal with that by using interval arithmetic. In particular, 
	\begin{align*}
		\ta &\in \ba + [-r^*,r^*],\\
		\forall~x\in[-1,1],\quad \th(x) &\in \bh(x) + [-r^*,r^*],\\
		\forall~x\in[-1,1],\quad \th'(x) &\in \bh'(x) + \sigma^{\even}_{1,\rho}[-r^*,r^*],
	\end{align*}
	the second estimate being a consequence of Lemma~\ref{lem:C0_VS_ell1}, and the last one following from Proposition~\ref{prop:derivative}.
			
		\subsubsection{Dealing with~\eqref{eq:Z_Kinf}}
		
		The bound derived in this section for \eqref{eq:Z_Kinf} will be denoted by $Z^{K,\infty}$ when reporting numerical values or in the code. 
		
		For a given $(\ta,\th)\in B_{\X_\rho}\left((\ba,\bh),r^*\right)$ we have that
		\begin{align*}
			\left\Vert \Pi^{K}\left(I - AD\Phi_m(\ta,\th)\right)\Pi^\infty\right\Vert_{\X_\rho} &= \left\Vert J \Pi^K D\Phi_m(\ta,\th)\Pi^\infty\right\Vert_{\X_\rho} \\
			&= \sup_{\left\Vert h \right\Vert_{\ell^{1,\even}_\rho \leq 1}} \left\Vert J \Pi^K \left( D\Phi_m(\ta,\th)(0,\Pi^\infty h) \right)\right\Vert_{\X_\rho}.
		\end{align*}
		Therefore, according to ~\eqref{eq:dphidh} we have to estimate, for any $h\in\ell^{1,\even}_\rho$ with $\left\Vert h \right\Vert_{\ell^1_\rho} \leq 1$,
		\begin{align*}
			\Pi^K D\Phi_m(\ta,\th)\left(0,\Pi^\infty h\right)&= \begin{pmatrix}
				\left(\Pi^\infty h\right)(0) \\
				\Pi^K\left[\ta \left(\Pi^\infty h\right) - \sum_{j=0}^{m-1}\tg_j(\cdot) \left(\Pi^\infty h\right)(\th^j(\ta \cdot)) \right]
			\end{pmatrix}\\
			&= \begin{pmatrix}
				\left(\Pi^\infty h\right)(0) \\
				-\sum_{j=0}^{m-1} \Pi^K\left[\tg_j(\cdot)\left(\Pi^\infty h\right)(\th^j(\ta \cdot))\right]
			\end{pmatrix}.
		\end{align*}
		First, notice that since $K$ is even, then $0$ is among the Chebyshev nodes, and thus $\left(\Pi^\infty h\right)(0)=0$. 
		
		We then get bounds for (the absolute values of) the Chebyshev coefficients of 
		\begin{equation}
			\label{eq:cheb_for_Z}
			\Pi^K\left[\tg_j(\cdot)\left(\Pi^\infty h\right)(\th^j(\ta \cdot))\right],\quad j=0,\ldots,m-1,
		\end{equation}
		following Remark~\ref{rem:2ways}. We finally add these bounds back together, multiply the output by $\vert J\vert$ and take the $\left\Vert\cdot\right\Vert_{\X_\rho}$ norm to get a bound on~\eqref{eq:Z_Kinf}. 
		
		Let us be more explicit about the way we bound the Chebyshev coefficients of~\eqref{eq:cheb_for_Z}. As explained in Remark~\ref{rem:2ways}, we in fact derive two different estimates and then take the minimum between the two.
		
		First, after having checked that $\th^j(\ta x_k)\in \E_{\beta}$ for all $0\leq k\leq K$ for some $1\leq \beta \leq \rho$, we use Proposition~\ref{prop:interp_alpharho} to get, for all $0\leq k\leq K$,
		\begin{align*}
			\left\vert \tg_j(x_k)\left(\Pi^\infty h\right)(\th^j(\ta x_k))\right\vert &\leq \left\vert \tg_j(x_k)\right\vert\left\Vert \left(\Pi^\infty h\right) \right\Vert_{\CC^0_{\beta}} \\
			&  \leq \left\vert \tg_j(x_k)\right\vert  \Upsilon^{0,1,\even}_{\beta,\rho,K} \left\Vert h \right\Vert_{\ell^1_\rho}.
		\end{align*}
		Multiplying this estimate by $\left\vert M_K^{-1}\right\vert$ we get a bound for the Chebyshev coefficients of~\eqref{eq:cheb_for_Z}:
		\renewcommand{\arraystretch}{1.5}
		\begin{equation}
			\label{eq:ZK1}
			\left\vert \Pi^K D\Phi_m(\ta,\th)\left(0,\Pi^\infty h\right) \right\vert \leq  
			\Upsilon^{0,1,\even}_{\beta,\rho,K} \begin{pmatrix}
				0 \\
				\vert M_K^{-1}\vert \ds \sum_{j=0}^{m-1}
				\begin{pmatrix}
					\left\vert \tg_j(x_0)\right\vert  \\
					\vdots \\
					\left\vert \tg_j(x_k)\right\vert   \\
					\vdots \\
					\left\vert \tg_j(x_K)\right\vert   
				\end{pmatrix} 
			\end{pmatrix}.
		\end{equation}
		\renewcommand{\arraystretch}{1}
		
		A second way of controlling the Chebyshev coefficients of~\eqref{eq:cheb_for_Z} is to consider $\beta_j$ and $\gamma_j$ satisfying
		\begin{equation}
			1\leq \beta_j \leq \rho \leq \gamma_j \qquad\text{and}\qquad \th^j\left(\ta \E_{\gamma_j} \right)\subset\E_{\beta_j},
		\end{equation}
		and then estimate
		\begin{align*}
			\left\Vert \tg_j(\cdot)\left(\Pi^\infty h\right)(\th^j(\ta \cdot))\right\Vert_{\CC^0_{\gamma_j}} &\leq \left\Vert \tg_j\right\Vert_{\CC^0_{\gamma_j}} \left\Vert \Pi^\infty h\right\Vert_{\CC^0_{\beta_j}} \\
			&\leq \left\Vert \tg_j\right\Vert_{\CC^0_{\gamma_j}} \Upsilon^{0,1,\even}_{\beta_j,\rho,K} \left\Vert h \right\Vert_{\ell^1_\rho}.
		\end{align*}
		The second part of Lemma~\ref{lem:decay_fk} then gives us a bound on the Chebyshev coefficients of
		\begin{align*}
			\Pi^K\left[\tg_j(\cdot)\left(\Pi^\infty h\right)(\th^j(\ta \cdot))\right],
		\end{align*}
		which decays like $\gamma_j^{-k}$. Putting all the terms together, we get a second estimate:
		\renewcommand{\arraystretch}{1.5}
		\begin{equation}
			\label{eq:ZK2}
			\left\vert \Pi^K D\Phi_m(\ta,\th)\left(0,\Pi^\infty h\right) \right\vert \leq   
			\begin{pmatrix}
				0 \\
				
				\ds \sum_{j=0}^{m-1} \Upsilon^{0,1,\even}_{\beta_j,\rho,K} \left\Vert \tg_j\right\Vert_{\CC^0_{\gamma_j}}   \begin{pmatrix}
					\frac{\gamma_j^{2K}+1}{\gamma_j^{2K}-1} \\
					\vdots \\
					
					\frac{1}{\gamma_j^k} \frac{\gamma_j^{2K} + \gamma_j^{2k}}{\gamma_j^{2K}-1}  \\
					
					\vdots \\
					\frac{1}{\gamma_j^K} \frac{\gamma_j^{2K}}{\gamma_j^{2K}-1}
				\end{pmatrix}
			\end{pmatrix}
		\end{equation}
		\renewcommand{\arraystretch}{1}
		\begin{remark}
			In practice, we optimize only over the $\gamma_j$, and take 
			\begin{align*}
				\beta_j = \beta_j(\gamma_j) = \frac{1}{2}\left(\eta(\gamma_j)+\sqrt{\eta(\gamma_j)^2-4}\right),
			\end{align*}
			where, according to Lemma~\ref{lem:inclusion_fE}, we take
			\begin{align*}
				\eta(\gamma_j) \geq \max_{z\in \partial\E_{\gamma_j}} \vert \th^j(\ta z)-1\vert + \vert \th^j(\ta z)+1\vert.
			\end{align*}
			This choice ensures that $\th^j\left(\ta\E_{\gamma_j}\right)\subset\E_{\beta_j}$.
			However, for each $j$, we do this optimization independently in each mode. That is, we may in fact select different $\beta_j$ and $\gamma_j$ for each component $k$ of~\eqref{eq:ZK2}.
		\end{remark}
		
		Finally, we take the minimum component-wise between~\eqref{eq:ZK1} and~\eqref{eq:ZK2}, multiply the result by $\vert J\vert$ and take the $\X_\rho$ norm to get a bound on~\eqref{eq:Z_Kinf}.
		
		\subsubsection{Dealing with~\eqref{eq:Z_inf}}

		Similarly to the previous two sections, the bound derived for \eqref{eq:Z_inf} will be denoted by $Z^{\infty}$ when reporting numerical values or in the code.

		For a given $(\ta,\th)\in B_{\X_\rho}\left((\ba,\bh),r^*\right)$ we have that
			
		\begin{align*}
			\left\Vert \Pi^{\infty}\left(I - AD\Phi_m(\ta,\th)\right)\right\Vert_{\X_\rho} 
			&= \sup_{\left\Vert (\alpha,h) \right\Vert_{\X_\rho} \leq 1} \left\Vert \Pi^{\infty}\left( I - AD\Phi_m(\ta,\th)\right) (\alpha,h)\right\Vert_{\X_\rho} .
		\end{align*}
		Therefore, we have to estimate, for $\left\Vert (\alpha,h) \right\Vert_{\X_\rho} \leq 1$,
		
		\begin{align*}
			\left\Vert \Pi^{\infty}\left((\alpha,h) - AD\Phi_m(\ta,\th)(\alpha,h)\right)\right\Vert_{\X_\rho} &= \left\Vert \Pi^{\infty}h -  \frac{1}{\ba}\Pi^{\infty}D\phi(\ta,\th)(\alpha,h)\right\Vert_{\ell^1_\rho}.
		\end{align*}
		According to~\eqref{eq:dphidalpha}-\eqref{eq:dphidh}, we have
		\begin{align*}			\Pi^{\infty}\left(h - \frac{1}{\ba}D\phi(\ta,\th)(\alpha,h)\right) &= \left(1-\frac{\ta}{\ba}\right) \Pi^\infty h \\			&\quad + \frac{1}{\ba}\sum_{j=0}^{m-1}\Pi^{\infty}\left( \tg_j(\cdot)\, h\left((\th^j(\ta \cdot)\right)  \right)\\			&\quad- \frac{\alpha}{\ba}\Pi^{\infty}(\th-\tf),		\end{align*}
		and we bound each term independently in the r.h.s. below
		\begin{align*}
			\left\Vert \Pi^{\infty}\left(h - \frac{1}{\ba}D\phi(\ta,\th)(\alpha,h)\right)\right\Vert_{\ell^1_\rho} & \leq \frac{r^*}{\vert\ba\vert} \left\Vert \Pi^\infty h\right\Vert_{\ell^1_\rho} \\
			&\quad + \frac{1}{\vert\ba\vert}\sum_{j=0}^{m-1}\left\Vert\Pi^{\infty}\left(\tg_j(\cdot)\, h\left((\th^j(\ta \cdot)\right) \right) \right\Vert_{\ell^1_\rho}\\
			&\quad + \frac{\vert\alpha\vert}{\vert\ba\vert}\left\Vert\Pi^{\infty}\left(\tf - \th\right)\right\Vert_{\ell^1_\rho}.
		\end{align*}
		The last term is very similar to the tail part of the $Y$ bound. Indeed, for any $\nu>\rho$ we have
		\begin{equation*}
			\frac{\vert\alpha\vert}{\vert\ba\vert}\left\Vert\Pi^{\infty}\left(\tf -  \th\right)\right\Vert_{\ell^1_\rho} \leq \frac{\vert\alpha\vert}{\vert\ba\vert} \left( \Upsilon^{1,0,\even}_{\rho,\nu,K}\left\Vert \tf\right\Vert_{\CC^0_{\nu}} + r^* \right).
		\end{equation*}
		In order to bound the remaining terms, we take, for $j=0,\ldots,m-1$, $\mu_j>\rho$ such that
		\begin{align*} \th^j(\ta\E_{\mu_j}) \subset \E_\rho
		\end{align*}
		and then use Proposition~\ref{prop:interp_alpharho0} to estimate
		\begin{align*}
			\frac{1}{\vert\ba\vert}\sum_{j=0}^{m-1}\left\Vert\Pi^{\infty}\left(\tg_j(\cdot)\, h\left((\th^j(\ta \cdot)\right) \right) \right\Vert_{\ell^1_\rho} &\leq \frac{1}{\vert\ba\vert}\sum_{j=0}^{m-1}\Upsilon^{1,0,\even}_{\rho,\mu_j,K}\left\Vert \tg_j(\cdot)\, h\left((\th^j(\ta \cdot)\right) \right\Vert_{\CC^0_{\mu_j}} \\
			&\leq \frac{1}{\vert\ba\vert}\left( \sum_{j=0}^{m-1}\Upsilon^{1,0,\even}_{\rho,\mu_j,K}\left\Vert \tg_j \right\Vert_{\CC^0_{\mu_j}}\right) \left\Vert h \right\Vert_{\ell^1_\rho}.
		\end{align*}
		Putting everything together, we have
		\begin{align*}
			\left\Vert \Pi^{\infty}\left(h - \frac{1}{\ba}D\phi(\ta,\th)(\alpha,h)\right)\right\Vert_{\ell^1_\rho} &\leq \frac{1}{\vert\ba\vert}\left(\Upsilon^{1,0,\even}_{\rho,\nu,K}\left\Vert \tf \right\Vert_{\CC^0_{\nu}} + r^*\right)\vert\alpha\vert\\
			&\quad + \frac{1}{\vert\alpha\vert}\left(\sum_{j=0}^{m-1}\Upsilon^{1,0,\even}_{\rho,\mu_j,K}\left\Vert \tg_j \right\Vert_{\CC^0_{\mu_j}} + r^*\right)\left\Vert h \right\Vert_{\ell^1_\rho}\\
			&\leq \frac{1}{\vert\alpha\vert}\max\left[\Upsilon^{1,0,\even}_{\rho,\nu,K}\left\Vert \tf \right\Vert_{\CC^0_{\nu}}+ r^*, \right. \\
			&\qquad\qquad\quad \left. \sum_{j=0}^{m-1}\Upsilon^{1,0,\even}_{\rho,\mu_j,K}\left\Vert \tg_j \right\Vert_{\CC^0_{\mu_j}} + r^*\right] \left(\vert\alpha\vert + \left\Vert h \right\Vert_{\ell^1_\rho}\right) \\
			&\leq \frac{1}{\vert\alpha\vert}\left( \max\left[\Upsilon^{1,0,\even}_{\rho,\nu,K}\left\Vert \tf \right\Vert_{\CC^0_{\nu}},\ \sum_{j=0}^{m-1}\Upsilon^{1,0,\even}_{\rho,\mu_j,K}\left\Vert \tg_j \right\Vert_{\CC^0_{\mu_j}}\right] + r^* \right),
		\end{align*}
		which upper-bounds~\eqref{eq:Z_inf}.

		\section{Eigenvalue}\label{sec:eigenvalue}
		
		In this section, we assume we have obtained a fixed point $f$ of $R_m$, using the computer-assisted proof described in Section~\ref{sec:fixed_point}. Examples of such results are provided in Section~\ref{sec:results}. Our goal is now to get a rigorous enclosure of the associated universal constant $\lambda$, i.e., the unstable eigenvalue of $DR_m(f)$. To that end, we abandon the intermediate problem $\Phi_m$ that was used to obtain the fixed point, and return to the standard representation of $R_m(f)$. Nonetheless, the Jacobian $DR_m$ contains terms that were already present in $D\Phi_m$, and in practice we are therefore able to take advantage of some of the computations already performed. 		
		We first express $R_m(f)$ in terms of $\tR_m(f^m(0), f)$ (recall equation~\eqref{eq:tR_m}).

We have that 		
		\begin{align*}
		%\label{def: full_R}
			R_m(f)(x) = \frac{1}{\alpha(f)}f^m\left(\alpha(f)x\right) = \frac{1}{\alpha(f)}\tR_m\left(\alpha(f),f\right)(x) \qquad \text{with} \qquad \alpha(f)=f^m(0) .
		\end{align*}
Then,
		\begin{align*}
			DR_m(f)(h)(x) &= \frac{-1}{{(\alpha(f))}^2} \partial_f\tR_m(\alpha(f),f)(h)(0) \times f^m(\alpha(f)x) \\
			&\quad + \frac{1}{\alpha(f)}\left( \partial_f\tR_m(\alpha(f),f)(h)(x) 
			+ (f^m)^{'} (\alpha(f)x) \times \partial_f\tR_m(\alpha(f),f)(h)(0) x \right),
		\end{align*}	

		and more explicitly, 		
		\begin{align*}
			DR_m(f)(h)(x) &= \frac{1}{f^m(0)} \sum_{j=0}^{m-1} \left( \prod_{l=j+1}^{m-1} f'\left(f^l\left(f^m(0)x\right)\right)\right)h\left(f^j\left(f^m(0)x\right)\right) \\
			&\quad + \left( \frac{1}{f^m(0)}x\prod_{j=0}^{m-1} f'\left(f^j\left(f^m(0)x\right)\right) - \frac{1}{\left(f^m(0)\right)^2}f^m\left(f^m(0)x\right) \right) \\
			&\quad \times \sum_{j=0}^{m-1} \left( \prod_{l=j+1}^{m-1} f'\left(f^l\left(0\right)\right)\right)h\left(f^j\left(0\right)\right) \\
			&= \frac{1}{\delta_m(0)} \left( \sum_{j=0}^{m-1} \xi_{j+1}(x) h\left(\delta_j(x)\right) + \left( x\xi_0(x) - \frac{\delta_m(x)}{\delta_m(0)} \right)\sum_{j=0}^{m-1} \xi_{j+1}(0)h\left(\delta_j(0)\right) \right),
		\end{align*}
		where
		\begin{align*}
			\delta_j(x) = f^j\left(f^m(0)x\right),\quad \xi_j(x) = \prod_{l=j}^{m-1} f'\left(\delta_l(x)\right).
		\end{align*}
		Let $f$ be a fixed point of $R_m$. We look for $\lambda\in\C$ and $u\in\ell^{1,\even}_\rho$ such that $(\lambda,u)$ is a zero of
		\begin{align*}
			F(\lambda,u) = \begin{pmatrix}
				u_0 -1 \\
				DR_m(f)u - \lambda u
			\end{pmatrix},
		\end{align*}
		where $u_0$ is the zero-th Chebyshev coefficient of $u$.
		\begin{remark}
			Some normalization is needed to ensure that the eigenpair we try to validate is isolated, but the specific choice of enforcing $u_0=1$ is somewhat arbitrary.
		\end{remark}
Similarly to what we did in Section~\ref{sec:fixed_point}, we reformulate the zero-finding problem into a fixed-point problem in order to validate a posteriori an approximate eigenpair.
We have,
		\begin{align*}
			DF(\bl,\bu) = \begin{pmatrix}
				0 & E_0 \\
				-\bu & DR_m(f)-\bl I   
			\end{pmatrix},
		\end{align*}
		where $E_0$ is the map $u\mapsto u_0$. Since $DR_m(f)$ is compact, we take $A$ as
		\begin{equation*}
			\left\{
			\begin{aligned}
				&A\, \Pi^K (\lambda,u) = J\, \Pi^K(\lambda,u) \\
				&A\, (I-\Pi^K) (\lambda,u) = \left(0,\, -\frac{1}{\bl}\,  (I-\Pi^K)u \right),
			\end{aligned}
			\right.
		\end{equation*}
		where $J$ is an approximate inverse of
		\begin{equation*}
			J^\dag= \Pi^K \restriction{DF(\bl,\bu)}{\X_\rho^K},
		\end{equation*}
		This leads to the fixed-point operator
		\begin{align*}
			T: (\lambda,u)\mapsto (\lambda,u) - A\, F(\lambda,u).
		\end{align*}
		We again will again use Theorem~\ref{th:fixed_point}, but this time since $F$ is merely quadratic in $(\lambda,u)$ we split the $Z$ estimate into a $Z_1$ and $Z_2$ part as explained in Remark~\ref{rem:fixed_point}. Specifically, we write, for an arbitrary $(\lambda,u)$ in $\X_\rho$: 
		
		\begin{align*}
			\left\Vert DT(\lambda,u) \right\Vert_{\X_\rho} &\leq 	 \left\Vert DT(\bl,\bu) \right\Vert_{\X_\rho} + \left\Vert DT(\lambda,u) - DT(\bl,\bu) \right\Vert_{\X_\rho} \\
			&\leq Z_1 + Z_2 r . 
		\end{align*}

		We derive suitable estimates $Y$, $Z_1$ and $Z_2$ below. Many of the calculations are very similar to the ones of Section~\ref{sec:bounds}, and we do not repeat all the details.
		
		\subsection[Y estimate in \eqref{eq:Y}]{$\bm{Y}$ estimate in \eqref{eq:Y}}

		The finite part is simply
		\begin{equation*}
			Y^K=\left\Vert  J\, \Pi^K F(\bl,\bu) \right\Vert_{\X_\rho}.
		\end{equation*}
		We estimate the coefficients of $\Pi^K F(\bl,\bu)$ using Remark~\ref{rem:2ways}, and then simply multiply the result by $J$ and compute the $\ell^1_\rho$ norm of the result.

		For the tail part, we use Proposition~\ref{prop:interp_alpharho0} to estimate
		\begin{align*}
			\frac{1}{\vert\bl\vert}\left\Vert  \left(I-\Pi^K\right) F(\bl,\bu) \right\Vert_{\ell^1_\rho} = \frac{1}{\vert\bl\vert}\left\Vert  \left(I-\Pi^K\right) \left( DR_m(f)\bu - \bl\bu\right) \right\Vert_{\ell^1_\rho}.
		\end{align*}
We then proceed is in Section~\ref{sec:Y}, and take
\begin{align*}
Y^{\infty} = \frac{1}{\vert\bl\vert}\left\Vert  \left(\Pi^{K_Y}-\Pi^K\right) \left( DR_m(f)\bu - \bl\bu\right)\right\Vert_{\ell^1_\rho}  + \frac{\Upsilon^{1,0,\even}_{\rho,\nu,K_Y}}{\vert\bl\vert}\left\Vert DR_m(f)\bu\right\Vert_{\CC^0_\nu},
\end{align*}
for some $K_Y>K$ and some $\nu>\rho$ chosen according to Remark~\ref{rem:opt}.

		\subsection[$Z_1$ estimate in Remark \ref{rem:fixed_point}]{$\bm{Z_1}$ estimate in Remark \ref{rem:fixed_point}}
		
		In this section, we estimate
		\begin{align}
			\left\Vert DT(\bl,\bu) \right\Vert_{\X_\rho} &= \left\Vert I - AD F(\bl,\bu)\right\Vert_{\X_\rho} \nonumber\\
			&\leq \left\Vert \Pi^{K}\left(I - AD F(\bl,\bu)\right)\Pi^K\right\Vert_{\X_\rho} \label{eq:Z1_KK}\\
			&\quad +  \left\Vert \Pi^{K}\left(I - AD F(\bl,\bu)\right)\Pi^\infty\right\Vert_{\X_\rho} \label{eq:Z1_Kinf}\\
			&\quad + \left\Vert \Pi^{\infty}\left(I - AD F(\bl,\bu)\right)\right\Vert_{\X_\rho}. \label{eq:Z1_inf}
		\end{align}

		\subsubsection{Dealing with~\eqref{eq:Z1_KK}}
		
		We have that
		\begin{align*}
			\left\Vert \Pi^{K}\left(I - AD F(\bl,\bu)\right)\Pi^K\right\Vert_{\X_\rho} &= \left\Vert I_K - J\Pi^{K}D F(\bl,\bu)\Pi^K\right\Vert_{\X_\rho},
		\end{align*}
		where $I_K$ is the identity operator on $\X_\rho^K$. Therefore, we merely have to compute the norm of a finite dimensional operator. 
 This bound will be denoted by $Z_1^{K,K}$ when reporting numerical values or in the code, similar to the case of the fixed point. 
		
		\subsubsection{Dealing with~\eqref{eq:Z1_Kinf}}
		
		We have that
		\begin{align*}
			\left\Vert \Pi^{K}\left(I - AD F(\bl,\bu)\right)\Pi^\infty\right\Vert_{\X_\rho} &= \left\Vert J \Pi^K D F(\bl,\bu)\Pi^\infty\right\Vert_{\X_\rho} \\
			&= \sup_{\left\Vert h \right\Vert_{\ell^{1,\even}_\rho \leq 1}} \left\Vert J \Pi^K \left( D F(\bl,\bu)(0,\Pi^\infty h) \right)\right\Vert_{\X_\rho}.
		\end{align*}
		Therefore, we have to estimate, for any $h\in\ell^{1,\even}_\rho$ with $\left\Vert h \right\Vert_{\ell^1_\rho} \leq 1$,
		\begin{align*}
			\Pi^K D F(\bl,\bu)\left(0,\Pi^\infty h\right)&= \begin{pmatrix}
				E_0\left(\Pi^\infty h\right) \\
				\Pi^K\left[DR_m(f)\Pi^\infty h - \bl\Pi^\infty h \right]
			\end{pmatrix}\\
			&= \begin{pmatrix}
				E_0\left(\Pi^\infty h\right) \\
				\Pi^K\left[DR_m(f)\Pi^\infty h\right]
			\end{pmatrix}.
		\end{align*}
		The first component is easy to bound. Indeed, according to Lemma~\ref{lem:aliasing} we have
		\begin{align*}
			\left\vert E_0\left(\Pi^\infty h\right) \right\vert &= \left\vert h_0 - \check{h}_0 \right\vert 
			= \left\vert 2\sum_{l=1}^\infty h_{2Kl} \right\vert 
			\leq \frac{1}{\rho^{2K}} \left\Vert h\right\Vert_{\ell^1_\rho}.
		\end{align*}
		We then get bounds for (the absolute values of) the Chebyshev coefficients of $\Pi^K\left[DR_m(f)\Pi^\infty h\right]$, following Remark~\ref{rem:2ways}. That is, we in fact derive two different estimates:
		\renewcommand{\arraystretch}{1.5}
		\begin{equation*}
			\left\vert \Pi^K D F(\bl,\bu)\left(0,\Pi^\infty h\right) \right\vert \leq  
			\begin{pmatrix}
				\rho^{-2K} \\
				\frac{\Upsilon^{0,1,\even}_{1,\rho,K}}{\vert \delta_m(0)\vert}\vert M_K^{-1}\vert \ds \sum_{j=0}^{m-1}
				\begin{pmatrix}
					\left\vert \xi_{j+1}(x_0)\right\vert + \left\vert \left(x_0\xi_0(x_0)-\frac{\delta_m(x_0)}{\delta_m(0)}\right) \xi_{j+1}(0)\right\vert \\
					\vdots \\
					\left\vert \xi_{j+1}(x_k)\right\vert + \left\vert \left(x_k\xi_0(x_k)-\frac{\delta_m(x_k)}{\delta_m(0)}\right) \xi_{j+1}(0)\right\vert  \\
					\vdots \\
					\left\vert \xi_{j+1}(x_K)\right\vert + \left\vert \left(x_K\xi_0(x_K)-\frac{\delta_m(x_K)}{\delta_m(0)}\right) \xi_{j+1}(0)\right\vert  
				\end{pmatrix} 
			\end{pmatrix},
		\end{equation*}
		and
		\begin{align*}
			&\left\vert \Pi^K D F(\bl,\bu)\left(0,\Pi^\infty h\right) \right\vert \leq \\
			&  
			\begin{pmatrix}
				\rho^{-2K} \\
				\ds \frac{1}{\vert \delta_m(0)\vert}\sum_{j=0}^{m-1} \left(\Upsilon^{0,1,\even}_{\beta_j,\rho,K} \left\Vert \xi_{j+1}\right\Vert_{\CC^0_{\gamma_j}}   \begin{pmatrix}
					\frac{\gamma_j^{2K}+1}{\gamma_j^{2K}-1} \\
					\vdots \\
					\frac{1}{\gamma_j^k} \frac{\gamma_j^{2K} + \gamma_j^{2k}}{\gamma_j^{2K}-1}  \\
					\vdots \\
					\frac{1}{\gamma_j^K} \frac{\gamma_j^{2K}}{\gamma_j^{2K}-1}
				\end{pmatrix}
				+
				\left\vert\xi_{j+1}(0)\right\vert \Upsilon^{0,1,\even}_{1,\rho,K} \left\Vert \cdot\xi_0 - \frac{\delta_m}{\delta_m(0)}\right\Vert_{\CC^0_{\gamma}}    \begin{pmatrix}
					\frac{\gamma^{2K}+1}{\gamma^{2K}-1} \\
					\vdots \\
					\frac{1}{\gamma^k} \frac{\gamma^{2K} + \gamma^{2k}}{\gamma^{2K}-1}  \\
					\vdots \\
					\frac{1}{\gamma^K} \frac{\gamma^{2K}}{\gamma^{2K}-1}
				\end{pmatrix}
				\right)
			\end{pmatrix} 
			\renewcommand{\arraystretch}{1}
		\end{align*}
take the minimum component-wise between~the two, and finally multiply the result by $\vert J\vert$ and take the $\X_\rho$ norm to get a bound on~\eqref{eq:Z1_Kinf}. This bound will be denoted by $Z_1^{K,\infty}$ when reporting numerical values or in the code. 
		
				\subsubsection{Dealing with~\eqref{eq:Z1_inf}}
				
				We have that
				
				\begin{align*}
					\left\Vert \Pi^{\infty}\left(I - AD F(\bl,\bu)\right)\right\Vert_{\X_\rho} 
					&= \sup_{\left\Vert (\lambda,h) \right\Vert_{\X_\rho} \leq 1} \left\Vert \left((0,\Pi^{\infty}h) + A \Pi^{\infty}D F(\bl,\bu)(\lambda,h) \right)\right\Vert_{\X_\rho} .
				\end{align*}
				Therefore, we have to estimate, for $\left\Vert (\lambda,h) \right\Vert_{\X_\rho} \leq 1$,
				
				\begin{align*}
					\left\Vert \Pi^{\infty}\left(h + \frac{1}{\bl}\left(DR_m(f)h-\bl h\right)\right)\right\Vert_{\ell^1_\rho} = \frac{1}{\vert \bl\vert} \left\Vert \Pi^{\infty}\left(DR_m(f)h\right)\right\Vert_{\ell^1_\rho}.
				\end{align*}
				In order to bound the first terms appearing in $DR_m(f)h$, we take for $j=0,\ldots,m-1$, $\mu_j>\rho$ such that
				\begin{align*} \delta_j(\E_{\mu_j}) \subset \E_\rho,
				\end{align*}
				and then use Proposition~\ref{prop:interp_alpharho0} to estimate
				\begin{align*}
					\sum_{j=0}^{m-1}\left\Vert\Pi^{\infty}\left(\xi_j\, h\circ \delta_j\right) \right\Vert_{\ell^1_\rho} &\leq \sum_{j=0}^{m-1}\Upsilon^{1,0,\even}_{\rho,\mu_j,K}\left\Vert \xi_j\, h\circ \delta_j \right\Vert_{\CC^0_{\mu_j}} \\
					&\leq \sum_{j=0}^{m-1}\Upsilon^{1,0,\even}_{\rho,\mu_j,K}\left\Vert \xi_j \right\Vert_{\CC^0_{\mu_j}}\left\Vert h \right\Vert_{\ell^1_\rho}.
				\end{align*}
				The other terms are can be dealt with as in the $Y$ bound, i.e.
				\begin{align*}
					\left\Vert\Pi^{\infty}\left( \cdot\xi_0 - \frac{\delta_m}{\delta_m(0)} \right)\right\Vert_{\ell^1_\rho} \leq \Upsilon^{1,0,\even}_{\rho,\nu,K} \left\Vert\cdot\xi_0 - \frac{\delta_m}{\delta_m(0)} \right\Vert_{\CC^0_\nu},
				\end{align*}
				for any $\nu>\rho$.
				Putting everything together, we have
				\begin{align*}
					\frac{1}{\vert \bl\vert} \left\Vert \Pi^{\infty}\left(DR_m(f)h\right)\right\Vert_{\ell^1_\rho} &\leq  \frac{1}{\vert\bl\delta_m(0)\vert}\left(\sum_{j=0}^{m-1}\Upsilon^{1,0,\even}_{\rho,\mu_j,K}\left\Vert \xi_j \right\Vert_{\CC^0_{\mu_j}} + \Upsilon^{1,0,\even}_{\rho,\nu,K} \left\Vert\cdot\xi_0 - \frac{\delta_m}{\delta_m(0)} \right\Vert_{\CC^0_\nu}\sum_{j=0}^{m-1}\left\vert \xi_{j+1}(0)\right\vert \right)\left\Vert h \right\Vert_{\ell^1_\rho}\\
					&\leq \frac{1}{\vert\bl\delta_m(0)\vert}\left(\sum_{j=0}^{m-1}\Upsilon^{1,0,\even}_{\rho,\mu_j,K}\left\Vert \xi_j \right\Vert_{\CC^0_{\mu_j}} + \Upsilon^{1,0,\even}_{\rho,\nu,K} \left\Vert\cdot\xi_0 - \frac{\delta_m}{\delta_m(0)} \right\Vert_{\CC^0_\nu}\sum_{j=0}^{m-1}\left\vert \xi_{j+1}(0)\right\vert \right),
				\end{align*}
				which upper-bounds~\eqref{eq:Z1_inf}. This bound will be denoted by $Z_1^\infty$ when reporting numerical values or in the code. 
				
				\subsection[$Z_2$ estimate in Remark \ref{rem:fixed_point}]{$\bm{Z_2}$ estimate in Remark \ref{rem:fixed_point}}
				
				Noticing that $DF$ is linear, we simply have, for any $(\lambda,u)$ in $\X_\rho$,				
				\begin{align*}
					\left\Vert DT(\lambda,u) - DT(\bl,\bu) \right\Vert_{\X_\rho} &= \left\Vert  A \left(DF(\lambda,u) - DF(\bl,\bu) \right)\right\Vert_{\X_\rho}\\
					&\leq 
					\left\Vert 
					A\begin{pmatrix}
						0 & 0 \\
						-(u - \bu) & -(\lambda - \bl) I   
					\end{pmatrix} \right\Vert_{\X_\rho}\\	
					&\leq \left\Vert 
					A\right\Vert_{\X_\rho} \left\Vert (\lambda,u) - (\bl,\bu)\right\Vert_{\X_\rho},
				\end{align*}
				hence we can take
				$Z_2 = \left\Vert 
				A\right\Vert_{\X_\rho}$.
				
				%%%%%%%%%%%%%%%%%%%%%%%%%%%%%%%%%%%%%%%%%%%%%%%%%%%%%%%%%	
				%%%%%%%%%%%%%%%%%%%%%%%%%%%%%%%%%%%%%%%%%%%%%%%%%%%%%%%%%%%%%%%%%%%%%%%%%%%%%%%%%%%%%%%%%%%%%%%%%%%%%%%%%%%%%%%%%%%%	
				
				\section{Results} \label{sec:results}
				
In this section, we first give the proofs of Theorem~\ref{thm:main}, and then present two additional results. The first one is a 500-digits accurate proof for the  classical $m=2$ case, and the second one provides extra fixed points for $m$ between 5 and 10. Finally, we present the proof of Theorem~\ref{thm:d2}.

\medskip

\noindent\textit{Proof of Theorem~\ref{thm:main}.}
Fix $m$ in $\{2,3,\ldots,10\}$, and consider the associated map $\Phi_m$ defined in equation~\eqref{eq:Phi_m}. Let $(\ba,\bh)$ be the approximate zero of $\Phi_m$ for the selected value of $m$, stored in the folder \texttt{renor\_code\_submit} (note that $\bh$ is a polynomial, represented in Figure~\ref{fig:theFixedPoints}).

Let $\rho=2$ and $r^*$ be as in Table~\ref{table:rstarandbounds}. Using \texttt{final\_script\_gen\_m\_d.jl}, we then evaluate the bounds $Y$ and $Z$ derived in Section~\ref{sec:bounds}, which satisfy assumption~\eqref{eq:YZ} of Theorem~\ref{th:fixed_point}. These bounds are evaluated using interval arithmetic via the IntervalArtithmetic.jl library~\cite{IntervalArithmeticJ}, to account for rounding errors. We then check that $Z<1$ and $r:=\frac{Y}{1-Z}\leq r^*$. Theorem~\ref{th:fixed_point} then yields the existence of a unique zero $(\alpha_*,h_*)$ of $\Phi_m$ in $\X_\rho$ such that $\Vert (\alpha_*,h_*) - (\ba,\bh) \Vert_{\X_\rho} \leq r$. Since $(\alpha_*,h_*)$ is a zero of $\Phi_m$, $h_*$ is a fixed point of $R_m$, and $h_*^m(0)=\alpha_*$. These $\bh$, $h_*$, $\alpha_*$ and $r$ are the $\bar{f}_m$, $f_m$, $\alpha_m$ and $\epsilon_m$ of Theorem~\ref{thm:main}. 

Next, we consider the approximate eigenpair $(\bl,\bu)$ of $DR_m(f_m)$ stored in the folder\linebreak \texttt{renor\_code\_submit}. Still using \texttt{final\_script\_gen\_m\_d.jl}, we then evaluate the bounds $Y$ and $Z$ (split as $Z_1$ and $Z_2$) derived in Section~\ref{sec:eigenvalue}, and apply once more Theorem~\ref{th:fixed_point}. Denoting by $\tilde{r}$ the error bound obtained this time, we get the existence of a unique eigenpair $(\lambda_m,u_m)$ of $DR_m(f_m)$ in $\X_\rho$ such that $\Vert (\lambda,u_m) - (\bl,\bu) \Vert_{\X_\rho} \leq \tilde{r}$. In particular, $\vert \lambda_m - \bl\vert \leq \tilde{r}$, which provides the enclosure of $\lambda_m$ reported in Table~\ref{table:1}. \hfill \qed 

%\begin{remark}
%Note that we have in fact established a slightly stronger result than the one stated in Theorem~\ref{thm:main}. Indeed, as seen in the above proof, the estimate $\vert f_m(x) - \bar{f}_m(x) \vert \leq \epsilon_m$ does in fact hold for all $x$ in the Bernstein ellipse $\E_2$ thanks to Lemma~\ref{lem:C0_VS_ell1}. 
%\end{remark}

\begin{remark}
Further details about the proof, such as the obtained values for each part of the bounds, are provided in Appendix~\ref{sec:bounds_results}. 
The entire proof runs for around 0.5 minutes for $m=2$ and 10 minutes for $m=10$ on an Intel(R) Core(TM) i7-1065G7 CPU @ 1.30GHz 1.50GHz processor with 16GB of ram.
\end{remark}

\begin{theorem}
\label{thm:m2}
For $m=2$ and the fixed point $f_2$ of $R_2$ obtained in Theorem~\ref{thm:main}, the universal constant $\lambda_2$ and $\alpha_2=f_2(f_2(0))=f_2(1)$ satisfy

$
\longformula {\lambda_2 \in 4.66920160910299067185320382046620161725818557747576863274565134300413433021131473713868974402394801381716598485518981513440862714202793252231244298889089085994493546323671341153248171421994745564436582379320200956105833057545861765222207038541064674949428498145339172620056875566595233987560382563722564800409510712838906118447027758542854198011134401750024285853824983357155220522360872502916788603626745272133990571316068753450834339344461037063094520191158769724322735898389037949462572512890979489\pm 10^{-500}},
$

$
\longformula{\alpha_2 \in -0.3995352805231344898575804686336937194335442804669527275170730449124380166088380429818445948741812667617940648468383667140945404846164364373609475570184545976789402326870225485797735028209746477510392557978775073697474932326975513734923082122088541722241308330948027391890574703944646041606699384157782298900077729901354421213971924552385259444903372376975537750905488329754433672693681140505788840461793440186571478080760841608146499827233996549139348743626575822619683926231334765669900667138742123579\pm 10^{-502}}.
$
\end{theorem}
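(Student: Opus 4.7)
The plan is to apply the same Newton-Kantorovich framework used to prove Theorem~\ref{thm:main} in the specific case $m=2$, but carried out in extended precision interval arithmetic with a much larger truncation dimension $K$. First I would construct a very accurate numerical approximation $(\bar\alpha, \bar h) \in \X_\rho^K$ of the zero of $\Phi_2$ (together with an accurate approximate eigenpair $(\bar\lambda, \bar u)$ of $DR_2(f_2)$) using floating point arithmetic at a precision substantially exceeding $500$ decimal digits, and with $K$ on the order of at least $600$ Chebyshev modes so that the truncation tails of both the fixed point and the eigenfunction are expected to lie well below $10^{-500}$. The choice $\rho = 2$, which is inside the known domain of analyticity of the classical Feigenbaum function (see Remark~\ref{rem:cheb}), ensures the geometric decay $\rho^{-k}$ of Chebyshev coefficients is fast enough to make the tail contributions negligible at this precision.

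Next I would evaluate, in extended precision interval arithmetic, the bounds $Y$ and $Z$ from Section~\ref{sec:bounds} associated with $\Phi_2$ at $(\bar\alpha, \bar h)$. Concretely: the finite part $Y^K$ is $\left\Vert J\, \Pi^K \Phi_2(\bar\alpha,\bar h)\right\Vert_{\X_\rho}$, computed by forming $\Pi^K(\bar\alpha \bar h - \bar h^2(\bar\alpha \cdot))$ via interval DFT as in Appendix~\ref{sec:intervalFFT}; and the tail part $Y^\infty$ requires a secondary interpolation at a moderately larger $K_Y$ together with the $\CC^0_\nu$ interpolation-error constant $\Upsilon^{1,0,\even}_{\rho,\nu,K_Y}$ from Proposition~\ref{prop:interp_alpharho0}. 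For the $Z$ bound one checks $Z^{K,K}, Z^{K,\infty}, Z^{\infty} < 1$ in total, choosing $r^*$ of order $10^{-499}$, say. Applying Theorem~\ref{th:fixed_point} yields an exact zero $(\alpha_2, f_2)$ of $\Phi_2$ with $\left\Vert(\alpha_2,f_2) - (\bar\alpha,\bar h)\right\Vert_{\X_\rho} \leq r$, where $r = Y/(1-Z)$, and by construction $r \leq 10^{-502}$. Since $\vert \alpha_2 - \bar\alpha\vert \leq r$, this delivers the enclosure for $\alpha_2$ stated in the theorem, up to setting $\bar\alpha$ equal to the printed approximation.

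The eigenvalue enclosure is then obtained in exactly the same way using the auxiliary operator $F$ from Section~\ref{sec:eigenvalue}: with $f_2$ now rigorously enclosed, one computes interval bounds on $Y$, $Z_1$, $Z_2$ for the eigenpair problem at $(\bar\lambda, \bar u)$, verifies the radii-polynomial inequality $Y + (Z_1-1)r + Z_2 r^2 \leq 0$ for some $r \leq 10^{-500}$, and reads off $\vert \lambda_2 - \bar\lambda\vert \leq r$ from Theorem~\ref{th:fixed_point}. The stated numerical enclosures for $\lambda_2$ and $\alpha_2$ are then what the interval arithmetic prints out. Note that the identity $\alpha_2 = f_2(f_2(0)) = f_2(1)$ follows from the constraint $f_2(0)=1$ built into $\Phi_2$, which is satisfied automatically by any zero of $\Phi_2$.

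The main obstacle is not mathematical but computational: at $500$-digit precision every arithmetic operation carries a nontrivial cost, and the DFT-based evaluation of $\bar h^2(\bar\alpha\cdot)$ at $K \gtrsim 600$ nodes, as well as the construction and inversion of $J^\dagger = \Pi^K D\Phi_2(\bar\alpha,\bar h)\vert_{\X_\rho^K}$, must be performed with enough guard digits that the interval radii for $Y$ and the matrix norm of $I_K - J J^\dagger$ do not blow up. In particular, one must ensure that $\bar h$ is computed to enough digits that the residual $\Pi^K\Phi_2(\bar\alpha,\bar h)$ is genuinely of size below $10^{-500}$, rather than being dominated by floating point noise, and that the tail estimate $Y^\infty$ coming from $\Upsilon^{1,0,\even}_{\rho,\nu,K_Y}$ is comparably small, which requires $K_Y$ and the intermediate radius $\nu$ to be chosen carefully (per Remark~\ref{rem:opt}) so that $\rho^{K_Y}/\nu^{K_Y}$ is safely below $10^{-500}$.
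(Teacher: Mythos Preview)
Your proposal is correct and follows essentially the same approach as the paper: rerun the Newton--Kantorovich argument of Theorem~\ref{thm:main} for $m=2$ with $\rho=2$, but in extended precision and with a much larger truncation $K$ so that the resulting error radius $r$ drops below $10^{-500}$. The paper's actual parameters are precision $2^{12}$ bits, $K=680$, and $r^*=10^{-470}$ (yielding $r_{\min}\approx 4.8\times 10^{-503}$ for the fixed point and $\approx 2.0\times 10^{-501}$ for the eigenpair), which is in line with your estimates.
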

\begin{proof}
The proof proceeds as the one of Theorem~\ref{thm:main}, except we use extended precision ($2^{12}$ digits) and a polynomial approximation of higher order ($K=680$) in order to compute a much finer approximate solution and then to evaluate the bounds, together with a much smaller $r^*$ (equal to $10^{-470}$). The entire proof can be reproduced by running \texttt{final\_script\_m2\_long-Copy\_final.jl}.
\end{proof}
\begin{remark}
Further details regarding the different bounds can also be found in Appendix~\ref{sec:bounds_results}. 
\end{remark}

Finally, we compute several distinct fixed points for $m$ between $5$ and $10$, along with their associated universal constants.

\begin{figure}[h!]
	
	\includegraphics[scale=0.5]{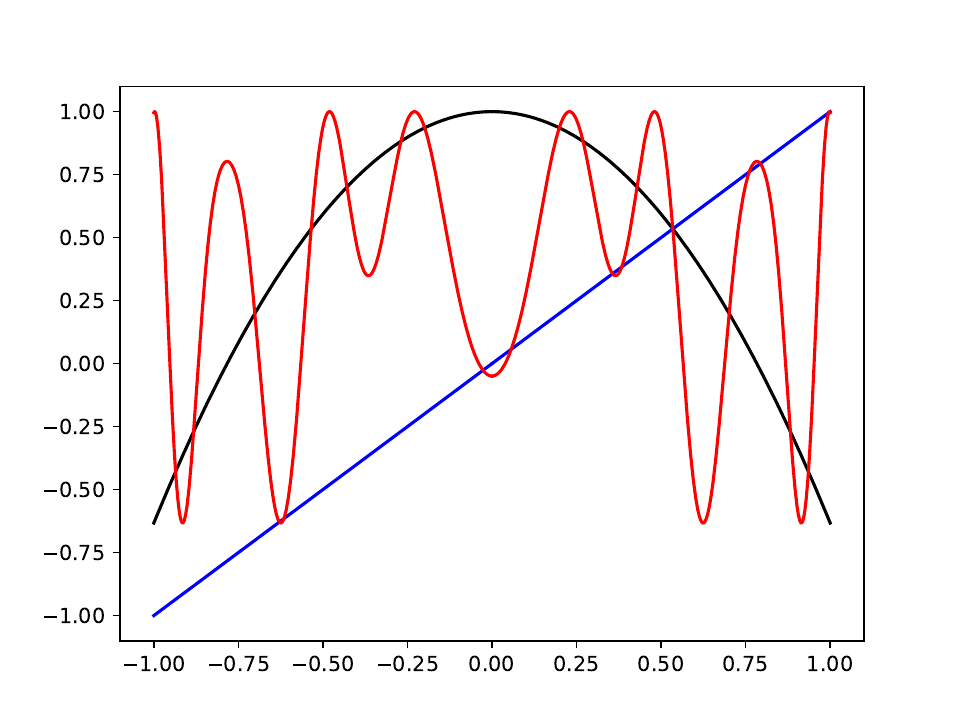}\hfill	
	\includegraphics[scale=0.5]{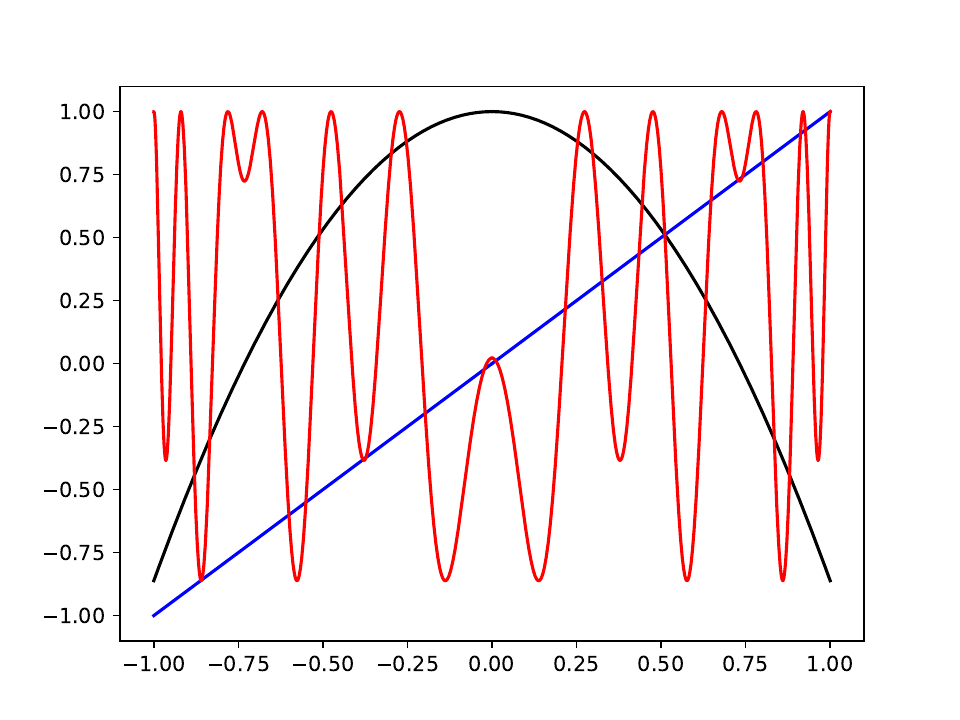}\hfill		
	\\
	\includegraphics[scale=0.5]{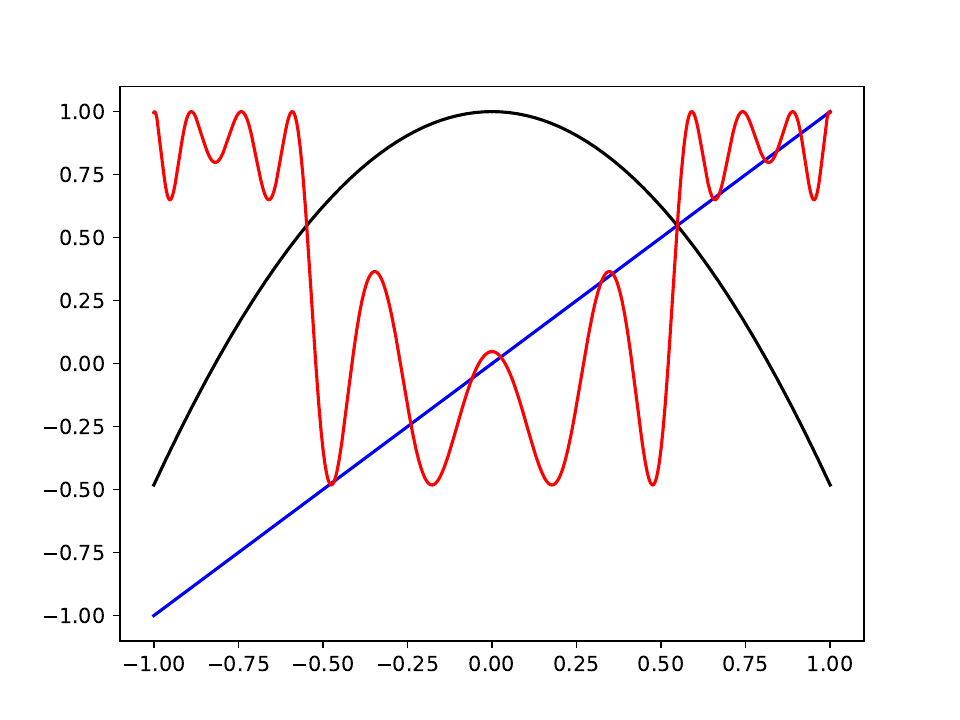}\hfill	
	\includegraphics[scale=0.5]{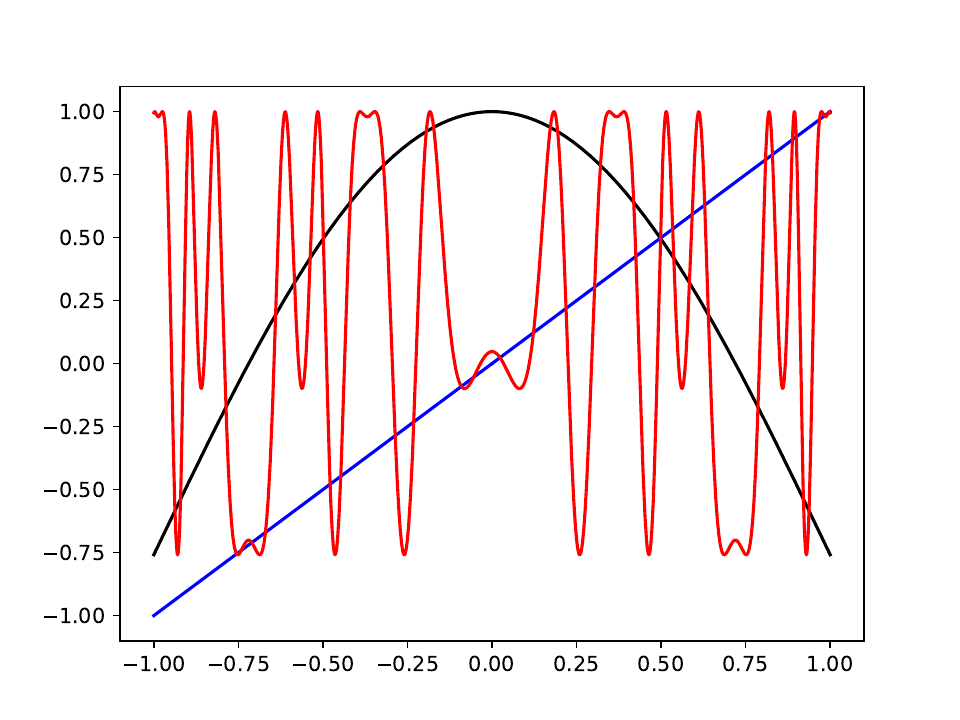}\hfill	
	\\	
	\includegraphics[scale=0.5]{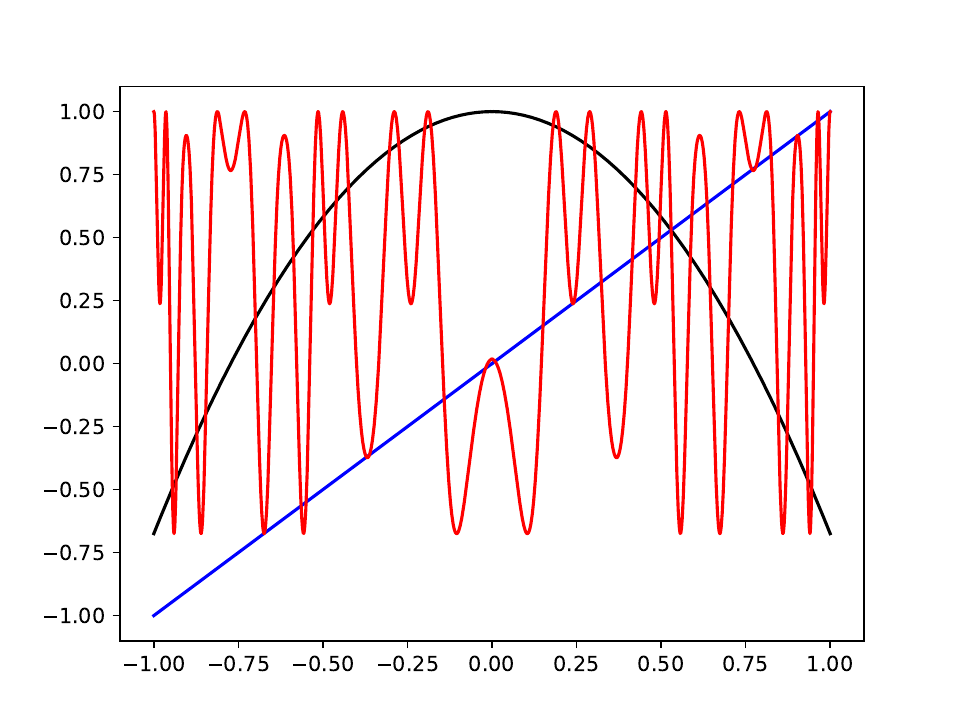}\hfill	
	\includegraphics[scale=0.5]{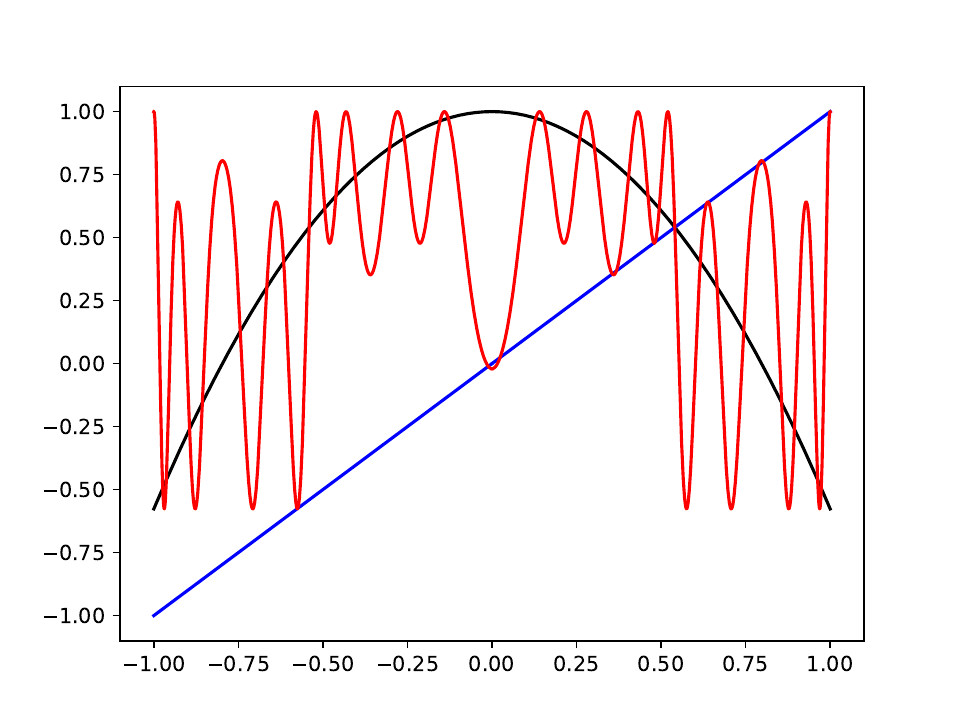}\hfill		
	\caption{Distinct fixed points with $m=5$, $m=6$ and $m=7$. On the left are the fixed points from Theorem~\ref{thm:main} for these values of $m$, already depicted on Figure~\ref{fig:theFixedPoints} and reproduced here for comparison. On the right are the $m$v2 fixed points from Theorem~\ref{thm:extra}. The black curves show the graph of the 
		polynomials $\bar{f}_m$ on $[-1,1]$, 
		the red curve is the composition of $\bar{f}_m$ with itself 
		$m$ times without rescaling, and the blue line is the fixed point line $y=x$.}
\label{fig:5to7}	
\end{figure}

\begin{figure}[h!]
	\includegraphics[scale=0.5]{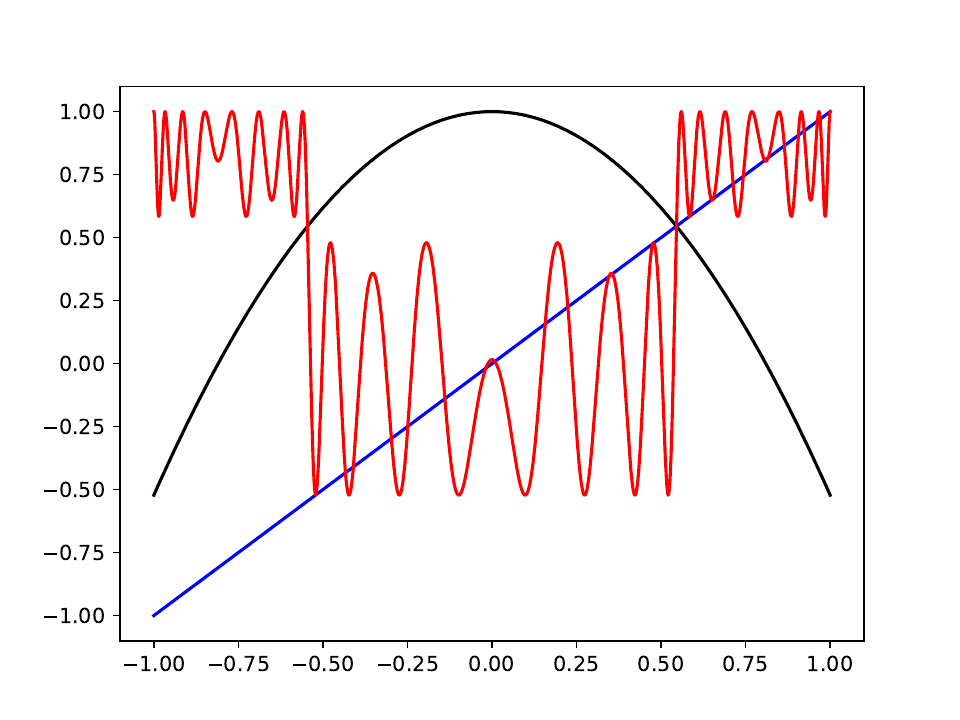}\hfill	
	\includegraphics[scale=0.5]{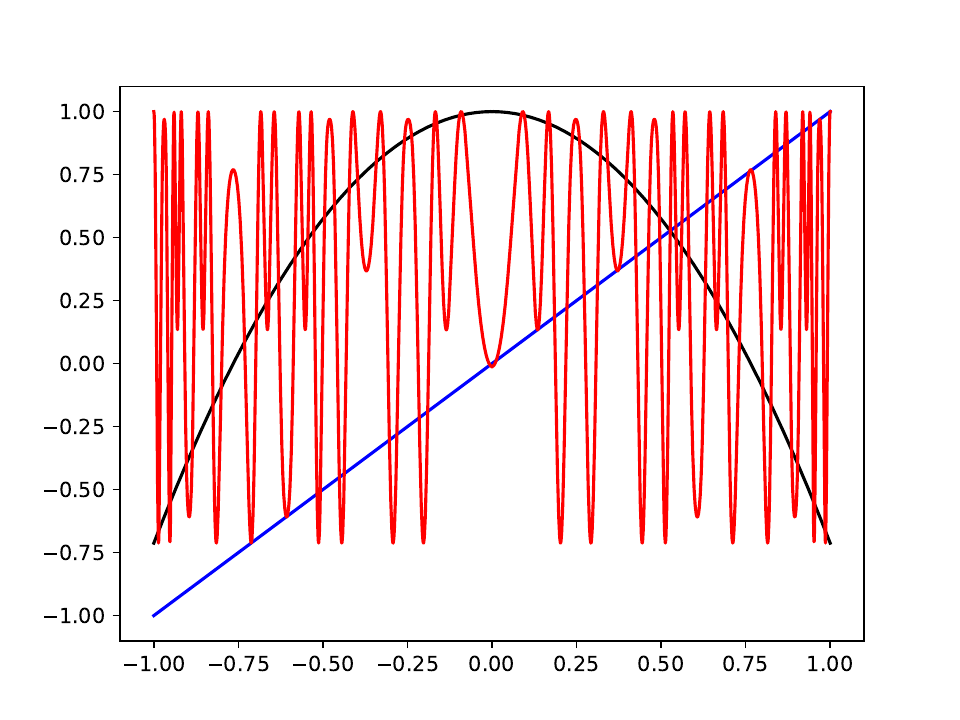}\hfill	
	\\
	\includegraphics[scale=0.5]{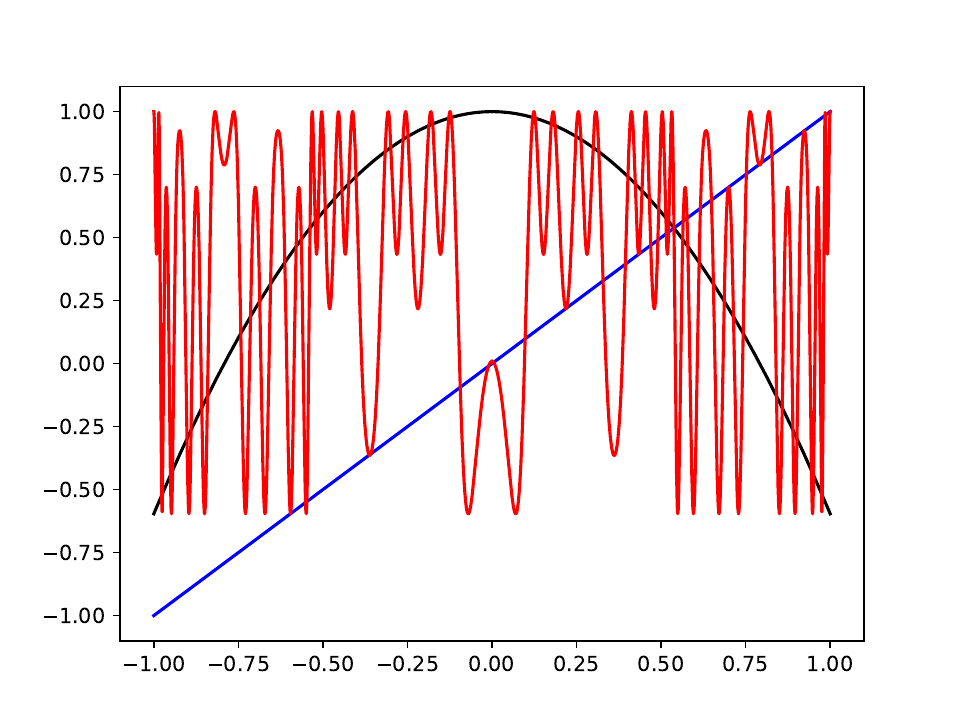}\hfill	
	\includegraphics[scale=0.5]{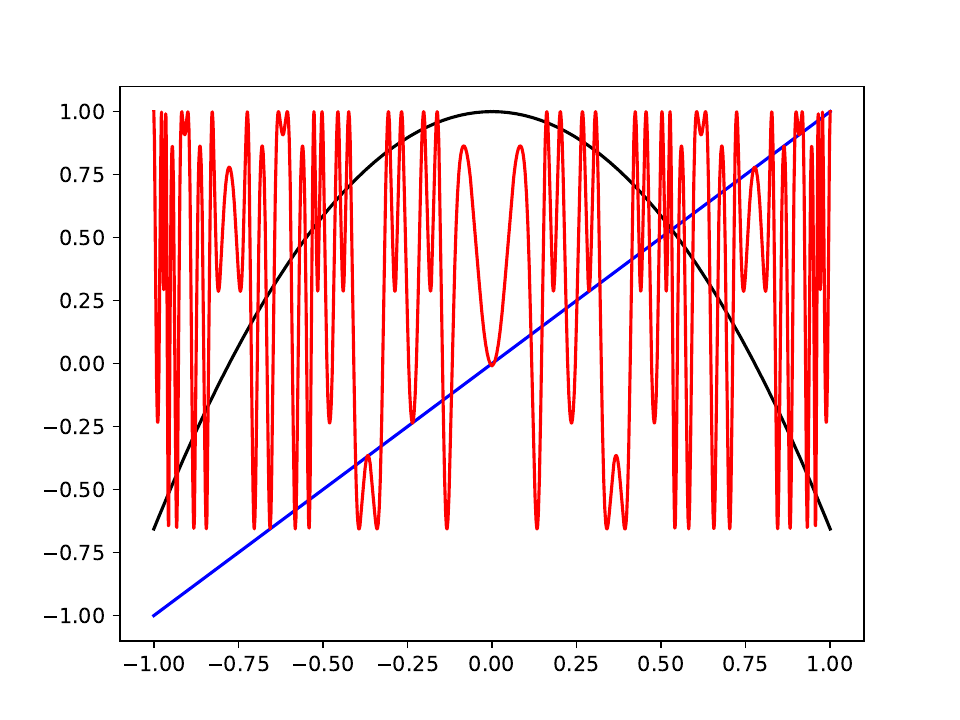}\hfill	
	\\
	\includegraphics[scale=0.5]{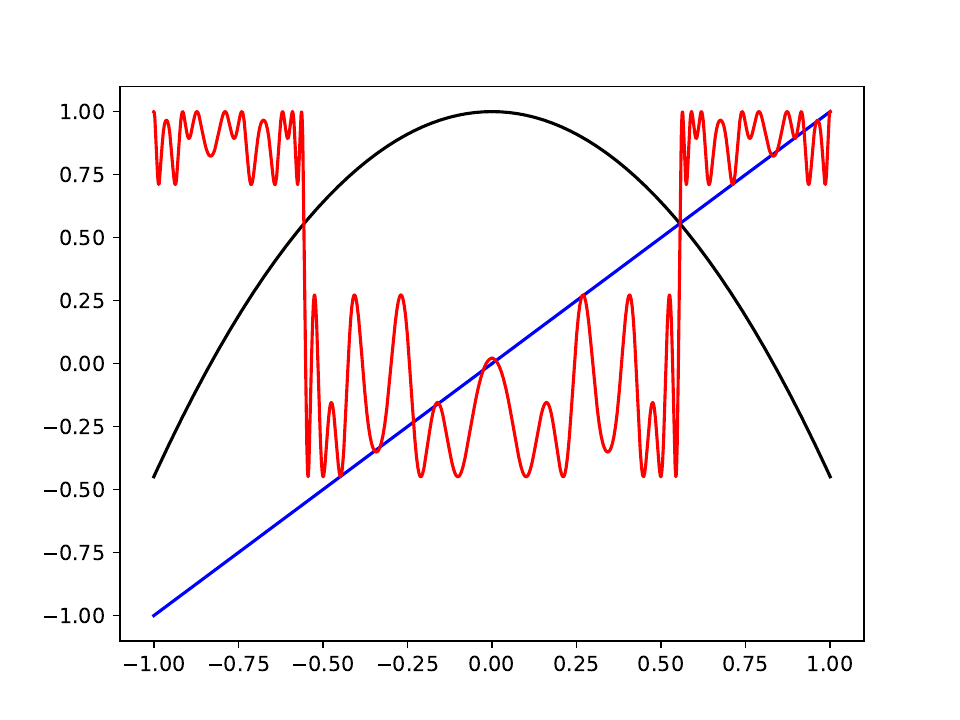}\hfill	
	\includegraphics[scale=0.5]{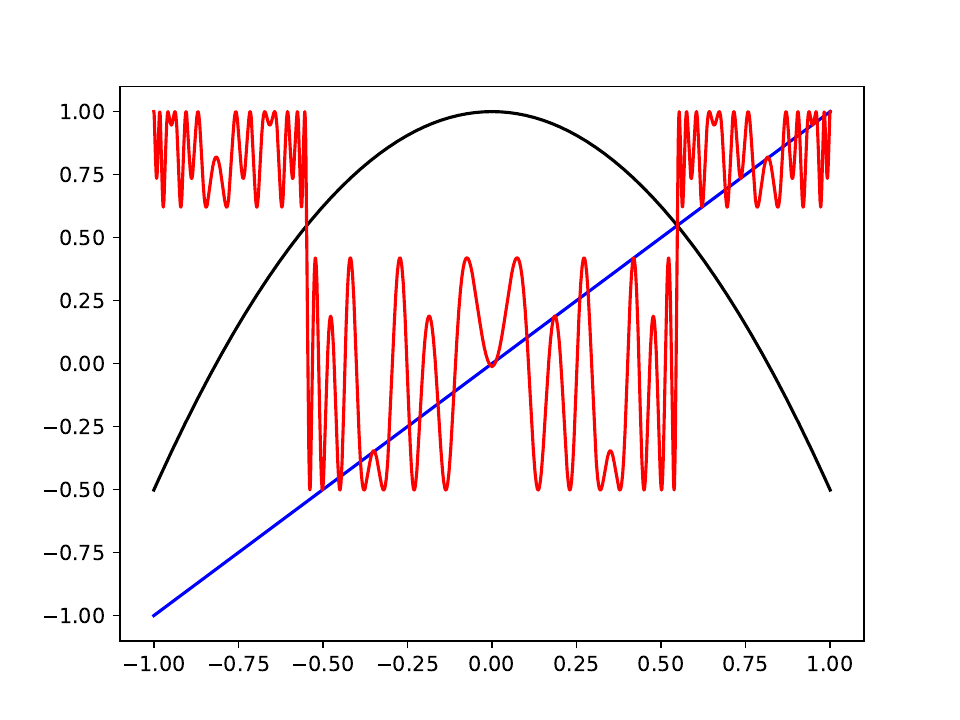}\hfill		
	\caption{Distinct fixed points with $m=8$, $m=9$ and $m=10$. On the left are the fixed points from Theorem~\ref{thm:main} for these values of $m$, already depicted on Figure~\ref{fig:theFixedPoints} and reproduced here for comparison. On the right are the $m$v2 fixed points from Theorem~\ref{thm:extra}. The black curves show the graph of the 
		polynomials $\bar{f}_m$ on $[-1,1]$, 
		the red curve is the composition of $\bar{f}_m$ with itself 
		$m$ times without rescaling, and the blue line is the fixed point line $y=x$.}
\label{fig:8to10}		
\end{figure}

\begin{theorem}
\label{thm:extra}
For each row in Table~\ref{table:thm_extrafixedpoints}, consider the corresponding polynomial $\bar{f}$ 
 whose precise coefficients in the Chebyshev basis can be found in the folder \texttt{renor\_code\_submit} (many of those $\bar{f}$ are represented in Figure~\ref{fig:5to7} and Figure~\ref{fig:8to10}). There exists $\rho>1$ and an analytic function $f\in \ell^{1,\even}_\rho$ such that $f$ is a fixed point of $R_m$, and $\Vert f - \bar{f} \Vert_{\ell^1_\rho} \leq \epsilon$. All of these fixed points are different from one another, and from the ones obtained in Theorem~\ref{thm:main} (even when the value of $m$ is the same). The corresponding value of $f^m(0)$ belongs to the interval $\alpha$, and the unstable eigenvalue of $DR_m(f)$ belongs to the interval $\lambda$. 
\end{theorem}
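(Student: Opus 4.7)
The plan is to run, for each row of Table~\ref{table:thm_extrafixedpoints}, the exact same computer-assisted argument used to establish Theorem~\ref{thm:main}, but with the approximate solution $\bar f$, the associated parameters $\rho$ and $r^*$, and the working precision adapted to that particular fixed point. Concretely, for each entry, we load the polynomial $\bar f$ and the corresponding approximate $\ba$ from the folder \texttt{renor\_code\_submit}, form the operator $\Phi_m$ from equation~\eqref{eq:Phi_m}, and compute the bounds $Y$ and $Z$ of Section~\ref{sec:bounds} using interval arithmetic in the Banach space $\X_\rho$. We then check numerically that $Z<1$ and $r := Y/(1-Z) \leq r^*$, at which point Theorem~\ref{th:fixed_point} produces a unique zero $(\alpha_*,h_*) \in \B_{\X_\rho}\bigl((\ba,\bh),r\bigr)$ of $\Phi_m$. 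Setting $f = h_*$ gives the fixed point of $R_m$, with $f^m(0) = \alpha_*$ enclosed by $\ba + [-r,r]$, which yields the reported interval $\alpha$, and we take $\epsilon = r$ as the enclosure bound in $\ell^1_\rho$.

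Once $f$ is obtained, we move to the unstable eigenvalue by repeating the Newton-Kantorovich procedure of Section~\ref{sec:eigenvalue}. Starting from the approximate eigenpair $(\bl,\bu)$ stored in \texttt{renor\_code\_submit}, we compute $Y$, $Z_1$ and $Z_2$ for the map $F$, check that $Z_1 + Z_2 r^* < 1$ and that the radii inequality $Y/(1-Z_1) \leq r^*$ (and $Z_2 r^* \leq 1 - Z_1$) is satisfied, and apply Theorem~\ref{th:fixed_point} (in the form of Remark~\ref{rem:fixed_point}) to obtain a locally unique eigenpair $(\lambda_*,u_*)$ of $DR_m(f)$ with $|\lambda_* - \bl| \leq \tilde r$. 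This delivers the enclosure $\lambda$ advertised in Table~\ref{table:thm_extrafixedpoints}.

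The only genuinely new issue compared to Theorem~\ref{thm:main} is the \emph{distinctness} claim: the fixed points produced here must be shown to differ from each other and from those of Theorem~\ref{thm:main}. This is handled a posteriori, directly from the validated enclosures: for any two fixed points $f_{i}, f_{j}$ coming from approximate solutions $\bar f_i, \bar f_j$ (possibly for the same value of $m$) with error radii $r_i, r_j$ in $\ell^1_{\rho}$ (where $\rho$ is the smaller of the two radii, so that both enclosures live in the same space), it suffices to check
\begin{equation*}
\|\bar f_i - \bar f_j\|_{\ell^1_{\rho}} > r_i + r_j,
\end{equation*}
which, since $\ell^1_\rho$-norms of polynomials can be computed directly with interval arithmetic, is an elementary interval computation. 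The same style of separation argument can instead be run on the eigenvalues $\alpha$ or $\lambda$ whenever their intervals are disjoint, which is in fact the quickest route in most cases and already sufficient to distinguish all the fixed points listed.

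The main obstacle is expected to be practical rather than conceptual: some of these extra fixed points correspond to more intricate kneading sequences, so the approximate solution $\bar f$ must be obtained with enough Chebyshev modes $K$ and in sufficient precision for the bounds $Y$ to fit under a reasonable $r^*$; additionally, the value of $\rho$ may have to be lowered compared to the $\rho = 2$ used in Theorem~\ref{thm:main} in order for $\bar f$ to truly lie in $\ell^{1,\even}_\rho$ with good decay, and the various auxiliary parameters $\nu$, $\mu_j$, $\beta_j$, $\gamma_j$ in Section~\ref{sec:bounds} and Section~\ref{sec:eigenvalue} must be retuned. All of these adjustments are encoded in the script \texttt{final\_script\_gen\_m\_d.jl}, and the proof for each row of the table is completed by a successful run of that script with the appropriate parameters, followed by the elementary separation check described above.
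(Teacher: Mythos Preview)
Your proposal is correct and follows essentially the same approach as the paper: rerun the computer-assisted Newton--Kantorovich argument of Theorem~\ref{thm:main} (Sections~\ref{sec:bounds} and~\ref{sec:eigenvalue}) with the new approximate data $(\ba,\bh)$ and $(\bl,\bu)$, then verify distinctness a posteriori from the validated enclosures. The paper's own proof is in fact terser---it simply states that the argument is identical to Theorem~\ref{thm:main} with different approximate solutions and that distinctness is ``trivial to check''---so your explicit triangle-inequality separation check and the remark about comparing via disjoint $\alpha$ or $\lambda$ intervals just spell out what the paper leaves implicit.
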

\begin{table}[h!]
\begin{tiny}
	\begin{center}
		\begin{tabular}{|c| c| c| c | c |} 
			\hline
			$id$ &  $\epsilon$  & $\alpha$ & $\lambda$  \\ 
			\hline\hline
5v2 & $10^{-18}$ & $0.021831959945968847\pm 10^{-18}$ &
						$ 1287.079118670726855828\pm 10^{-18}$ \\ \hline
5v3 & $10^{-18}$ & $-0.006248774967523457\pm 10^{-18}$ &
						$ 16930.645600440324906026\pm 10^{-18}$ \\ \hline
6v2 & $10^{-18}$ & $0.0477814797951692540\pm 10^{-19}$ &
						$ 218.4117951404949630935\pm 10^{-19}$ \\ \hline
6v3 & $10^{-19}$ & $-0.008694742127194554107054\pm 10^{-24}$ &
						$ 8507.7807835296246946195560996\pm 10^{-25}$ \\ \hline
7v2 & $10^{-18}$ & $-0.020339275729028984\pm 10^{-18}$ &
						$ 1446.441208992749725076\pm 10^{-18}$ \\ \hline
7v3 & $10^{-17}$ & $-0.007611915064702328\pm 10^{-18}$ &
						$ 10169.622465553708925541\pm 10^{-18}$ \\ \hline
7v4 & $10^{-32}$ & $0.0052208251178575989233699652732933677619657\pm 10^{-43}$ &
						$ 22840.37231548534717803508900336129634104003\pm 10^{-38}$ \\ \hline
7v5 & $10^{-32}$ & $-0.0043462191586195449514786183058666896110701\pm 10^{-43}$ &
						$ 35305.72646407373806388964752779792264729955\pm 10^{-38}$ \\ \hline
8v2 & $10^{-32}$ & $-0.011348310995370730350939747117190237592980\pm 10^{-42}$ &
						$ 5829.619563695360963996553495979704788192\pm 10^{-36}$ \\ \hline
8v3 & $10^{-32}$ & $0.015062576067338632962028240246726094137566\pm 10^{-42}$ &
						$ 2304.557844448592270375995417525541102801\pm 10^{-36}$ \\ \hline
9v2 & $10^{-32}$ &	 $-0.00696696826234461922363933818695872948849\pm 10^{-41}$ &
						$ 12818.372832195703881107882269687740106639\pm 10^{-36}$ \\ \hline
10v2 & $10^{-17}$ & $-0.010323910150325685\pm 10^{-18}$ &
						$ 4522.772195811370162545\pm 10^{-18}$ \\ \hline
10v3 & $10^{-32}$ & $0.02084892360493885744659877200151557651941478\pm 10^{-44}$ &
						$ 1110.53787417653278160218070069767775988323\pm 10^{-38}$ \\ \hline
		\end{tabular}
	\end{center}
	\caption{\textbf{Data associated with Theorem \ref{thm:extra}:}
	the table records the rigorously verified enclosures for the universal constants associated with $m$-th order renormalization fixed points available in the folder \texttt{renor\_code\_submit} and proven to exists in Theorem~\ref{thm:extra}. The first number in $id$ refers to the value of $m$, the second one is used in the code to distinguish between the different fixed points having the same value of $m$.}
	\label{table:thm_extrafixedpoints}
\end{tiny}
\end{table}

\begin{remark}
Denoting the renormalization fixed points from Theorem~\ref{thm:main} as $m$v1, we have that the fixed points 6v1 and 6v2 have the same universal constants, and so do 8v1 and 8v3, as well as 10v1 and 10v3.
	This phenomena has been observed by other authors and is discussed in more detail in 
	\cite{de2011combination} and the references therein.  
\end{remark}

%$(2,2)_1$ & $10^{-40}$ & $-0.5916099166344381501396243543816289537902229891\pm 10^{-46}$ &
%$ 7.2846862170733433643089305679955530694780\pm 10^{-40}$ \\ \hline
%$(3,2)_1$ & $10^{-45}$ & $0.020848923604938857446598772\pm 10^{-27}$ &
%$ 1110.537874176532781602180700697\pm 10^{-27}$ \\ \hline

%%%%%%%%%%%%%%%%%%%%%%%%%%%%%%%%%%%%%%%%%%%%%%%%%%%%%%%%%%
\noindent\textit{Proof of Theorem~\ref{thm:extra}.}
The proof is exactly the same as the one of Theorem~\ref{thm:main}, except we start with different approximate solutions $(\ba,\bh)$ (and then different approximate eigenpairs $(\bl,\bu)$). Once the error bounds between the exact fixed points and the approximate ones are obtained, is it trivial to check that these fixed points are indeed all different from one another. \hfill \qed

%%%%%%%%%%%%%%%%%%%%%

Finally we compute rigorous bounds for the Schwarzian derivatives (see~\eqref{eq:Schwartz}) of the computed fixed points over $[-1,1]\setminus\{0\}$. These values are presented in Table \ref{table:kneading_seq}. 	

\begin{lemma}\label{lemma:Schwarzian}
	The fixed points that were proven to exist in Theorem \ref{thm:main} and Theorem \ref{thm:extra} have negative Schwarzian derivatives $(Sf)(x)$ for all $x$ in $[-1,1]\setminus\{0\}$. More precisely, defining $(S_1f)(x):=2(f'(x))^2 (Sf)(x)$, for each fixed point $S_1f$ is bounded above on $[-1,1]$ by the value given in the last column of Table \ref{table:kneading_seq}. 
	
\end{lemma}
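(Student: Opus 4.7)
The plan is to reduce the claim about the Schwarzian to a statement about the polynomial-like quantity
\[
S_1 f(x) = 2f'(x)f'''(x) - 3(f''(x))^2,
\]
which is analytic on $[-1,1]$ (in particular at $x=0$, where $(Sf)$ itself has a pole because $f'(0)=0$). Observing that $(Sf)(x) = S_1 f(x) / (2 f'(x)^2)$ and that $f'(x)^2 > 0$ on $[-1,1]\setminus\{0\}$ (since $f$ is even, unimodal, and $f'$ vanishes only at $0$), it suffices to produce a strictly negative upper bound for $S_1 f$ on the whole interval $[-1,1]$. This will simultaneously give the claim about $(Sf)$ on $[-1,1]\setminus\{0\}$ and the tabulated upper bound on $S_1 f$.

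Starting from the validated enclosure $\Vert f - \bar{f}\Vert_{\ell^1_\rho} \leq \epsilon$ obtained in Theorems~\ref{thm:main} and~\ref{thm:extra}, I would first derive rigorous $\CC^0_1$ bounds for $f' - \bar{f}'$, $f'' - \bar{f}''$ and $f''' - \bar{f}'''$. For $f'$ this is a direct application of Proposition~\ref{prop:derivative} (with $\rho=1$, $\nu=\rho$); for the higher derivatives one iterates, either by first viewing $f - \bar f \in \ell^1_{\rho'}$ for some $1<\rho'<\rho$ (using Lemma~\ref{lem:C0_VS_ell1} on a Bernstein ellipse slightly smaller than $\E_\rho$ and the explicit coefficient decay from Lemma~\ref{lem:decay_fk}), or by applying Lemma~\ref{lem:derCheb} twice more. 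Writing $f = \bar{f} + e$ and expanding
\[
S_1 f - S_1 \bar{f} = 2\bar{f}' e''' + 2 e' \bar{f}''' + 2 e' e''' - 6 \bar{f}'' e'' - 3 (e'')^2,
\]
I can then bound $\Vert S_1 f - S_1 \bar{f}\Vert_{\CC^0_1}$ by a polynomial in $\epsilon$ whose coefficients involve $\Vert \bar{f}'\Vert_{\CC^0_1}$, $\Vert \bar{f}''\Vert_{\CC^0_1}$, $\Vert \bar{f}'''\Vert_{\CC^0_1}$ and the constants $\sigma_{1,\rho}^\even$; these are all explicitly computable.

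Because $\bar f$ is a polynomial, $S_1 \bar{f}$ is itself a polynomial whose Chebyshev coefficients can be computed rigorously. A rigorous upper bound for $S_1 \bar f$ on $[-1,1]$ can then be obtained either via the interval-FFT procedure recalled in Appendix~\ref{sec:intervalFFT} (applied to $S_1\bar f$ viewed as a polynomial on $\partial \E_\nu$ for $\nu$ very close to $1$, together with evaluation on a fine grid of $[-1,1]$), or, more practically, by adaptively subdividing $[-1,1]$ into small subintervals and evaluating $S_1\bar{f}$ there with interval arithmetic, using the Chebyshev coefficients of $\bar{f}',\bar{f}'',\bar{f}'''$. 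Adding the $\CC^0_1$ enclosure $\Vert S_1 f - S_1 \bar f \Vert_{\CC^0_1} \leq C \epsilon$ to this upper bound and checking that the resulting number is strictly negative (and matches the value in Table~\ref{table:kneading_seq}) completes the argument.

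The main obstacle is purely numerical sharpness: on the parts of $[-1,1]$ where $S_1 \bar f$ is closest to $0$ the margin between the deterministic upper bound for $S_1 \bar f$ and the perturbation $C\epsilon$ can be thin, so the subdivision of $[-1,1]$ has to be refined adaptively until interval overestimation is small enough to certify negativity. No new analytic difficulty arises; everything reduces to combining the validated enclosure $\epsilon$ of Theorems~\ref{thm:main} and~\ref{thm:extra} with Proposition~\ref{prop:derivative} and rigorous polynomial evaluation.
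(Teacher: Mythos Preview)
Your proposal is correct and follows essentially the same route as the paper: reduce to bounding $S_1f=2f'f'''-3(f'')^2$ on $[-1,1]$, split as $S_1\bar f+(S_1f-S_1\bar f)$, rigorously maximize the polynomial $S_1\bar f$, and control the difference via $\CC^0_1$ bounds on $e',e'',e'''$ obtained by iterating Proposition~\ref{prop:derivative} through intermediate weights (the paper does exactly this, alternating with Lemma~\ref{lem:C0_VS_ell1} and explicit choices $\delta=(\rho-1)/3$, $\delta=(\rho-1)/5$). Your concern about thin margins is unwarranted in practice---the paper notes the resulting derivative bounds are not sharp but still comfortably suffice.
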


\begin{proof}
	Notice that      
	\begin{equation*}
		(S_1f)(x) = 2f'''(x)f'(x) - 3(f''(x))^2, 
	\end{equation*}
which is now defined on $[-1,1]$, $0$ included. Since $\bf$ is a polynomial which we know explicitly, we can easily rigorously enclose $\max_{x\in [-1,1]} (S_1\bf)(x)$ and find an upper bound 
\begin{align*}
\max_{x\in [-1,1]} (S_1\bf)(x) \leq Y^K_{S_1f}.
\end{align*}
Provided we can also estimate $\left\Vert (S_1f) - (S_1\bar{f}) \right\Vert_{\CC^0_1}$, we then readily get and upper bound for $\max_{x\in [-1,1]} (S_1f)(x)$.

Indeed, we denote $h= f - \bar{f}$ and look for the following bounds 
	\begin{align*}
		&\left\Vert h \right\Vert_{\ell^1_\rho}  \leq r_{\text{min}} \\
		&\left\Vert h^{(j)} \right\Vert_{\CC^0_1}  \leq r^{(j)}_{\text{min}}\\
		&\left\Vert \bar{f}^{(j)} \right\Vert_{\CC^0_1} \leq {M^K_j},
	\end{align*}
	where $h^{(j)}$ denotes the $jth$ derivative of $h$, and $\rho$ refers to the weight of the $\ell^1_\rho$ space in which the fixed point was validated. For each fixed point, an explicit value for $r_{\text{min}}$ is provided by Theorem~\ref{thm:main} or Theorem~\ref{thm:extra}. Proposition~\ref{prop:derivative} then allows us to take $r^{(1)}_{\text{min}} = \sigma_{1,\rho}\, {r}_{\text{min}}$. Combining Proposition~\ref{prop:derivative} with Lemma~\ref{lem:C0_VS_ell1}, we then obtain
	\begin{align*}
	\left\Vert h'' \right\Vert_{\CC^0_1} &\leq \sigma_{1,1+\delta} \left\Vert h' \right\Vert_{\ell^1_{1+\delta}} \\
	&\leq  \frac{2+3\delta}{\delta} \, \sigma_{1,1+\delta}\left\Vert h' \right\Vert_{\CC^0_{1 + 2\delta}} \\
	&\leq \frac{2+3\delta}{\delta}\,\sigma_{1,1+\delta}\,\sigma_{1+2\delta,\rho}\, {r}_{\text{min}},
	\end{align*}
	with $\delta = \frac{\rho -1}{3}$, which gives a computable expression for $r^{(2)}_{\text{min}}$, and a similar calculation yields
	\begin{align*}
	r^{(3)}_{\text{min}} = \frac{(1+2\delta)^2 + (1+\delta)^2}{(2+3\delta)\delta}\, \frac{(2+7\delta)}{\delta} \, \sigma_{1,1+\delta}\, \sigma_{1+2\delta,1+3\delta}\,\sigma_{1 + 4\delta,\rho} \,{r}_{\text{min}},
	\end{align*}
where this time $\delta=\frac{\rho -1}{5}$.
Since $\bar{f}$ is a known polynomial, computing a bound $M^K_j$ is straightforward. Putting everything together, we can then get an upper bound for:
	\begin{align*}
		\left\Vert (S_1f) - (S_1\bar{f}) \right\Vert_{\CC^0_1} &= 
		\left\Vert 2f'''(x)f'(x) - 2\bar{f}'''(x)\bar{f}'(x) \right\Vert_{\CC^0_1} + \left\Vert 3(f''(x))^2 - 3(\bar{f}''(x))^2 \right\Vert_{\CC^0_1} \\ &\leq Y^1_{S_1f} + Y^2_{S_1f} . 
	\end{align*}
Computable expressions for the two terms can be obtained by taking
	\[
	Y^1_{S_1f} = 2\left((M^K_3 r^{(1)}_{\text{min}} + M^K_1 r^{(3)}_{\text{min}} + r^{(3)}_{\text{min}} r^{(1)}_{\text{min}}\right),
	\]
	and 
	\[
	Y^2_{S_1f} = 3r^{(2)}_{\text{min}} \left(2M^K_2 + r^{(2)}_{\text{min}} \right). 
	\]
We then have 
\begin{align*}
\max_{x\in [-1,1]} (S_1f)(x) \leq M_{S_1f} := Y^K_{S_1f} + Y^1_{S_1f} + Y^2_{S_1f},
\end{align*}	
whose values are shown in Table \ref{table:kneading_seq}.
\end{proof}
\begin{remark}
In the proof of Lemma~\ref{lemma:Schwarzian}, we made the choice of going via Lemma~\ref{lem:C0_VS_ell1} and Proposition~\ref{prop:derivative} in order to obtain $\CC^0$ estimates on higher order derivatives. The resulting estimates are admittedly not very sharp, but they are still good enough to allows us to conclude that the Schwarzian derivative is negative. If a more precise control on higher order derivatives was needed, one could get sharper results by directly deriving new estimates, instead of going back an forth between $\CC^0$ and $\ell^1$, as is done for instance in~\cite[Lemma 5.11]{BleBluBreEng24}.
\end{remark}
%%%%%%%%%%%%%%%%%%%%%%%%%%%%%%%%%%%%%%%%%%%%%%%%%%%%%%%%%%%%

%%%%%%%%%%%%%%%%%%%%%%%%%%%%%%%%%%%%%%%%%%%%%%%%%%%%%%%%
\noindent\textit{Proof of Theorem~\ref{thm:d2}.}
The proof first repeats verbatim the arguments and calculations of the proof of Theorem~\ref{thm:main} (with different approximate fixed points, and different choices of $K$ and of precision, which can be found in Appendix~\ref{sec:bounds_results} and in the file \texttt{final\_script\_gen\_m\_d.jl}). This provides us with the existence of a fixed point $f$ close to the approximate one $\bf$. The last remaining step of the proof is to establish that $f$ is exactly of degree $d=4$, i.e. that the Taylor expansion of $f$ at zero writes
\begin{equation*} 
f(z) = 1 + a_4 z^4 + o(z^4).
\end{equation*}
Since our fixed point argument was applied in $\R\times \ell^{1,\even}_\rho$, we only need to prove that $f''(0) = 0$. This will in fact be established a posteriori, without need for any further calculation.

Indeed, note that $\tilde\ell^{1,\even}_\rho := \left\{ f \in \ell^{1,\even}_\rho, f''(0) = 0\right\}$ is a closed subspace of $\ell^{1,\even}_\rho$ which is stable by composition, hence $\tilde\X_\rho := \R \times \tilde\ell^{1,\even}_\rho$ is a closed subspace of $\X_\rho$ which is stable by our zero-finding map $\Phi_m$. The subspace $\tilde\X_\rho$ is therefore also stable by the fixed point operator
\begin{align*}
\tilde{T} : (\alpha,f)\mapsto (\alpha,f) - \left(D\Phi_m(\alpha,f)\right)^{-1} \Phi_m(\alpha,f).
\end{align*}
As shown in the proof of~\cite[Theorem 2.15]{BerBreLesVee21}, if the assumptions of Theorem~\ref{th:fixed_point} are satisfied, then not only is the operator $T$ defined in~\eqref{eq:defT} a contraction on the ball $\B_{\X_\rho}\left((\ba,\bf),r\right)$, but so is $\tilde{T}$. Hence, provided the approximate fixed point $(\ba,\bf)$ belongs to $\tilde{\X_\rho}$, $\tilde{T}$ is also a contraction on the ball $\B_{\tilde\X_\rho}\left((\ba,\bh),r\right)$ in $\tilde{\X_\rho}$, and therefore there exists a unique fixed point of $\tilde{T}$ in this ball. By local uniqueness, this must be the same fixed point as the one already proven to exist in $\B_{\X_\rho}\left((\ba,\bf),r\right)$, therefore the fixed point $f$ is indeed of degree $d=4$. 

This argument requires us to ensure that the approximate fixed point $\bf$ lies exactly in $\tilde\ell^{1,\even}_\rho$, but this is easy to ensure using interval arithmetic (we again refer to \texttt{final\_script\_gen\_m\_d.jl} for details).
\hfill \qed
					
\section*{Acknowledgment}	

The Authors would like to sincerely thank Rafael de la Llave for many insightful discussions
as this project developed.  JG was partialy supported by NSF grant DMS - 2001758,
JDMJ was partially supported by NSF grant DMS - 2307987, and  
MB was partially supported by the ANR project CAPPS: ANR-23-CE40-0004-01
during this work. Part of this work was conducted during the thematic semester titled “Computational
Dynamics - Analysis, Topology \& Data”, supported by the Centre de Recherches Math\'ematiques
at the University of Montreal and the Simons Foundation. MB and JDMJ sincerely thank these institutions
for their funding and for providing the opportunity to explore this research.

				%%%%%%%%%%%%%%%%%%%%%%%%%%%%%%%%%%%%%%%%%
				%%%%%%%%%%%%%%%%%%%%%%%%%%%%%%%%%%%%%%%%%%%%%%%%%%%%%%%%%%%%%%%%%%%%%%%%%%%%
		
\section*{Appendix}				
\appendix

\section{Computing $\sigma^\even_{\rho,\nu}$}
				
				We provide here a computable upper bound for the constant $\sigma^\even_{\rho,\nu}$ introduced in Proposition~\ref{prop:derivative}, together with a criterion ensuring that this upper-bound is actually sharp. A similar procedure can be derived for $\sigma^\odd_{\rho,\nu}$.
				\begin{lemma}
					\label{lem:sigma_expl}
					Let $1\leq \rho<\nu$, and $n_0\in\N_{\geq 1}$ such that $n_0 \geq \frac{1}{2(\ln\nu-\ln\rho)}$. Then
					\begin{align*}
						\sigma^\even_{\rho,\nu} \leq \max\left( \max_{1\leq n\leq n_0-1} \frac{2n}{\nu^{2n}}\left(\rho\frac{\rho^{2n}-1}{\rho^2-1} +\rho^{-1}\frac{\rho^{-2n}-1}{\rho^{-2}-1} \right),  \frac{2n_0}{\nu^{2n_0}}\left(\rho^{2n_0}\frac{\rho}{\rho^2-1} +\frac{\rho^{-1}}{1-\rho^{-2}} \right) \right).
					\end{align*}
					Moreover, if $n_0$ is large enough so that
					\begin{align*}
						\max_{1\leq n\leq n_0-1} \frac{2n}{\nu^{2n}}\left(\rho\frac{\rho^{2n}-1}{\rho^2-1} +\rho^{-1}\frac{\rho^{-2n}-1}{\rho^{-2}-1} \right) \geq \frac{2n_0}{\nu^{2n_0}}\left(\rho^{2n_0}\frac{\rho}{\rho^2-1} +\frac{\rho^{-1}}{1-\rho^{-2}} \right),
					\end{align*}
					then 
					\begin{align*}
						\sigma^\even_{\rho,\nu} =  \max_{1\leq n\leq n_0-1} \frac{2n}{\nu^{2n}}\left(\rho\frac{\rho^{2n}-1}{\rho^2-1} +\rho^{-1}\frac{\rho^{-2n}-1}{\rho^{-2}-1} \right).
					\end{align*}
				\end{lemma}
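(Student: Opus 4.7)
The plan is to define
\[
a_n := \frac{2n}{\nu^{2n}}\left(\rho\frac{\rho^{2n}-1}{\rho^2-1} +\rho^{-1}\frac{\rho^{-2n}-1}{\rho^{-2}-1} \right),\qquad b_n := \frac{2n}{\nu^{2n}}\left(\rho^{2n}\frac{\rho}{\rho^2-1} +\frac{\rho^{-1}}{1-\rho^{-2}} \right),
\]
and to first establish the pointwise majorization $a_n \leq b_n$ for all $n \geq 1$. This reduces to two elementary inequalities: $\rho^{2n}-1 \leq \rho^{2n}$ for the first summand, and, since $\rho\geq 1$ makes both $\rho^{-2n}-1$ and $\rho^{-2}-1$ non-positive, $\frac{\rho^{-2n}-1}{\rho^{-2}-1} = \frac{1-\rho^{-2n}}{1-\rho^{-2}} \leq \frac{1}{1-\rho^{-2}}$ for the second. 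Note that $b_{n_0}$ is exactly the quantity appearing in the statement.

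The second step is to show that $n \mapsto b_n$ is non-increasing on $[n_0,\infty)$. Splitting $b_n = c_1(n) + c_2(n)$ with $c_1(n) = \frac{\rho}{\rho^2-1}\cdot 2n\,(\rho/\nu)^{2n}$ and $c_2(n) = \frac{\rho^{-1}}{1-\rho^{-2}}\cdot \frac{2n}{\nu^{2n}}$, and extending to real $n$, one computes that $c_1'(n)$ has the sign of $1+2n(\ln\rho-\ln\nu)$, while $c_2'(n)$ has the sign of $1-2n\ln\nu$. Since $\ln\nu > 0$ and $\ln\nu \geq \ln\nu-\ln\rho$, the hypothesis $n_0 \geq \frac{1}{2(\ln\nu-\ln\rho)}$ forces both derivatives to be non-positive on $[n_0,\infty)$, so both $c_1$ and $c_2$, and hence $b$, are non-increasing there.

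Putting the two steps together yields the first claim: by monotonicity, $\sup_{n\geq n_0} a_n \leq \sup_{n\geq n_0} b_n = b_{n_0}$, and splitting the supremum at $n_0$ gives
\[
\sigma^\even_{\rho,\nu} = \max\left(\max_{1\leq n\leq n_0-1} a_n,\ \sup_{n\geq n_0} a_n\right) \leq \max\left(\max_{1\leq n\leq n_0-1} a_n,\ b_{n_0}\right),
\]
which is exactly the stated bound. For the second claim, if already $\max_{1\leq n\leq n_0-1} a_n \geq b_{n_0}$, then for every $n\geq n_0$ we have $a_n \leq b_n \leq b_{n_0} \leq \max_{1\leq n\leq n_0-1} a_n$, so the supremum is attained in the finite range and coincides with this finite maximum. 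The only mild obstacle is the degenerate case $\rho=1$, where the denominators $\rho^2-1$ and $1-\rho^{-2}$ vanish; but this case is already covered by the simpler explicit formulas recorded in the remark following Proposition~\ref{prop:derivative}, so the statement above is understood to pertain to $\rho>1$, and otherwise the argument is routine calculus.
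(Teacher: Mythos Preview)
Your proof is correct and follows essentially the same approach as the paper's: establish the pointwise bound $a_n\le b_n$, then use that the continuous extension of $b$ is decreasing past $\frac{1}{2(\ln\nu-\ln\rho)}$ to reduce the tail supremum to $b_{n_0}$. You simply spell out the monotonicity argument (via the split $b=c_1+c_2$ and explicit derivatives) and the derivation of the second claim in more detail than the paper, and you also flag the $\rho=1$ degeneracy, which the paper handles separately in the remark after Proposition~\ref{prop:derivative}.
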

				\begin{proof}
					For all $n\in\N_{\geq 1}$,
					\begin{align*}
						\frac{2n}{\nu^{2n}}\left(\rho\frac{\rho^{2n}-1}{\rho^2-1} +\rho^{-1}\frac{\rho^{-2n}-1}{\rho^{-2}-1} \right) \leq \frac{2n}{\nu^{2n}}\left(\frac{\rho}{\rho^2-1}\rho^{2n} +\frac{\rho^{-1}}{1-\rho^{-2}} \right),
					\end{align*}
					therefore
					\begin{align*}
						\sigma^\even_{\rho,\nu} \leq \max\left( \max_{1\leq n\leq n_0-1} \frac{2n}{\nu^{2n}}\left(\rho\frac{\rho^{2n}-1}{\rho^2-1} +\rho^{-1}\frac{\rho^{-2n}-1}{\rho^{-2}-1} \right), \sup_{n \geq n_0} \frac{2n}{\nu^{2n}}\left(\rho^{2n}\frac{\rho}{\rho^2-1} +\frac{\rho^{-1}}{1-\rho^{-2}} \right) \right),
					\end{align*}
					and the announced estimates simply follows from the fact that the map
					\begin{align*}
						x\mapsto \frac{x}{\nu^{x}}\left(\frac{\rho}{\rho^2-1}\rho^{x} +\frac{\rho^{-1}}{1-\rho^{-2}} \right)
					\end{align*}
					is decreasing for $x\geq \frac{1}{\ln\nu-\ln\rho}$.
				\end{proof}

\section{Going between Chebyshev coefficients and values at Chebyshev nodes}
				\label{sec:DCT}

				In practice, while we represent an even polynomial $h\in\Pi^{2K}\ell^1_\rho$ by a vector containing its coefficients (of even index) in the Chebyshev basis, i.e. $h = h_0 + 2 \sum_{k=1}^K h_{2k}$, it is sometimes convenient to work instead with the values at the Chebyshev points, for instance to efficiently compute compositions. We point out that the isomorphism between these two representations can be computed explicitly by
				\begin{equation*}
					\begin{pmatrix}
						h(x_0) \\ h(x_1) \\ \vdots \\ h(x_K)
					\end{pmatrix}
					=M_K \begin{pmatrix}
						h_0 \\ h_2 \\ \vdots \\ h_{2K}
					\end{pmatrix},
				\end{equation*}
				where
				
					\begin{equation*}
						M_K=\begin{pmatrix}
							\cos(0\theta_0) & 2\cos(2\theta_0) & \ldots & 2\cos(2(K-1)\theta_0) & 2\cos(2K\theta_0) \\
							\cos(0\theta_1) & 2\cos(2\theta_1) & \ldots & 2\cos(2(K-1)\theta_1) & 2\cos(2K\theta_1) \\
							\vdots & \vdots & \ddots & \vdots & \vdots \\
							\cos(0\theta_{K-1}) & 2\cos(2\theta_{K-1}) & \ldots & 2\cos(2(K-1)\theta_{K-1}) & 2\cos(2K\theta_{K-1}) \\
							\cos(0\theta_K) & 2\cos(21\theta_K) & \ldots & 2\cos(2(K-1)\theta_K) & 2\cos(2K\theta_K) \\
						\end{pmatrix},
					\end{equation*}
					and $x_k = \cos\theta_k$ are half the Chebyshev nodes,  i.e.,
					\begin{align*}
						\theta_k = \frac{K-k}{2K}\pi, \quad 0\leq k\leq K.
					\end{align*}
					It should be noted that this is nothing but (one version of) the Discrete Cosine Transform. Therefore, the inverse transformation can also be described explicitly and has a very similar expression

					\begin{equation*}
						\renewcommand{\arraystretch}{1.5}
						M_K^{-1}=\frac{1}{2K}
						\begin{pmatrix}
							\frac{1}{2} \cos(0\theta_0) & \cos(0\theta_1) & \ldots & \cos(0\theta_{K-1}) & \frac{1}{2} \cos(0\theta_K) \\
							\frac{1}{2} \cos(2\theta_0) & \cos(2\theta_1) & \ldots & \cos(2\theta_{K-1}) & \frac{1}{2} \cos(2\theta_K) \\
							\vdots & \vdots & \ddots & \vdots & \vdots \\
							\frac{1}{2} \cos(2(K-1)\theta_0) & \cos(2(K-1)\theta_1) & \ldots & \cos(2(K-1)\theta_{K-1}) & \frac{1}{2} \cos(2(K-1)\theta_K) \\
							\frac{1}{4} \cos(2K\theta_0) & \frac{1}{2}\cos(2K\theta_1) & \ldots & \frac{1}{2}\cos(2K\theta_{K-1}) & \frac{1}{4 }\cos(2K\theta_K) \\
						\end{pmatrix}.
					\end{equation*}
					
					These transformations can also be computed efficiently via Fast Fourier Transform algorithms.

\section{Computing the supremum on a Bernstein Ellipse}
					\label{sec:intervalFFT}
					Consider a polynomial $h$, written in the Chebyshev basis:
					\begin{align*}
						h(x) = h_0 + 2\sum_{k=1}^K h_k T_k(x).
					\end{align*} 
We describe here a strategy originating from~\cite{BerBreLesMir24} (see also~\cite{HarSan24}) allowing to efficiently compute, at least to sharply upper-bound, the $\CC^0_\rho$ norm of $h$.			For any $\rho>1$, we have
					\begin{align*}
						\left\Vert h\right\Vert_{\CC^0_\rho} & =\max\limits_{z\in\E_\rho} \vert h(z)\vert \\
						& =\max\limits_{z\in\partial\E_\rho} \vert h(z)\vert \\
						& =\max\limits_{\theta\in[0,2\pi]} \left\vert h\left(\frac{1}{2}\left(\rho e^{i\theta}+(\rho e^{i\theta})^{-1} \right)\right)\right\vert \\
						& =\max\limits_{\theta\in[0,2\pi]} \left\vert \sum_{k=-K}^K h_k\rho^k e^{ik\theta}\right\vert \\
						& =\max\limits_{\theta\in[0,2\pi]} \left\vert f(\theta)\right\vert,
					\end{align*}
					where
					\begin{align*}
						f(\theta) = \sum_{k=-K}^K f_k e^{ik\theta}, \quad f_k = h_k \rho^k.
					\end{align*}
					Therefore, we simply have to compute (or at least to upper-bound) the supremum of a trigonometric polynomial on $[0,2\pi]$.
					
					Now, given an integer $N\geq 2K$, and considering the uniform grid
					\begin{align*}
						\theta_n = n\frac{2\pi}{N},\quad n=0,\ldots,N,
					\end{align*} 
					we can efficiently evaluate $f(\theta_n)$ for all $n\in\{0,\ldots,N\}$ using the FFT (and padding $f$ by zeros if necessary). In order to get a rigorous enclosure of the image of $f$ on the whole interval $[0,2\pi]$, rather than just of the image of the grid, we notice that, for any $\theta\in[0,2\pi]$, there exists $n\in\{0,\ldots,N\}$ such that $\theta\in \theta_n + [-\delta,\delta]$, with $\delta = \frac{\pi}{N}$. Therefore, with interval arithmetic, 
					\begin{align*}
						f(\theta) \in \sum_{k=-K}^K f_k e^{ik[-\delta,\delta]} e^{ik\theta_n}.
					\end{align*}
					That is, if we consider $\tilde{f}$ the trigonometric polynomial (having interval coefficients) given by
					\begin{align*}
						\tilde f(\theta) = \sum_{k=-K}^K \tilde f_k e^{ik\theta}, \quad \tilde f_k = f_k e^{ik[-\delta,\delta]},
					\end{align*}
					and evaluate $\tilde f$ on the grid $\theta_0,\ldots,\theta_N$, (which can be done efficiently using the FFT), we get $N+1$ intervals whose reunion contains $f([0,2\pi]) = h(\partial\E_\rho)$. We can then easily get an upper-bound for $\vert h(\partial\E_\rho) \vert$, but also for any quantity of the form  $\vert G(h(\partial\E_\rho)) \vert$ if $G$ is an analytic function which we can rigorously evaluate with intervals.
					
					%%%%%%%%%%%%%%%%%%%%%%%%%%%%%%%%%%%%%%%%%%%%%			
		
\section{Implementation and results details on the computer-assisted proofs}
					\label{sec:bounds_results}
					
All the calculations were done using the 
IntervalArithmetic.jl library~\cite{IntervalArithmeticJ} version 0.20.8, in Julia 1.8.5. The proof of Theorem~\ref{thm:main} was done using $\rho=2$ and an extend precision of $128$, i.e., $128$ digits for the mantissa of floating-point numbers, or of $256$. For the proof of Theorem~\ref{thm:extra}, we use a precision of $256$ and sometimes also slightly vary $\rho$ (see Table~\ref{table:rstarandbounds_extra}).  For the proof of Theorem~\ref{thm:d2}, a precision of $256$ was also used, but much smaller values of $\rho$ were required (see Table~\ref{table:rstarandbounds_d2}), whereas the proof of Theorem~\ref{thm:m2} used $\rho=2$ and a precision of $4096$.

For each of the fixed points obtained in Theorem~\ref{thm:main}, we specify in Table~\ref{table:rstarandbounds} and Table~\ref{table:boundsEV} how many Chebyshev modes where used, what value of $r^*$ was selected, and provide the evaluation of the bounds described in Sections~\ref{sec:bounds} and~\ref{sec:eigenvalue}. The same thing is done in Table~\ref{table:rstarandbounds_m2} and Table~\ref{table:boundsEV_m2} regarding Theorem~\ref{thm:m2}, in Table~\ref{table:rstarandbounds_extra} and Table~\ref{table:boundsEV_extra} for Theorem~\ref{thm:extra} and in Table~\ref{table:rstarandbounds_d2} and Table~\ref{table:boundsEV_d2} for Theorem~\ref{thm:d2}

\begin{table}[h!]
	\begin{center}
	\scalebox{0.85}{
		\begin{tabular}{| c | c | c | c | c | c | c | c | c |c|c|} 
			\hline
			$id$ & $K$ &  $r_*$  & $Y^K$ & $Y^\infty$ & $Z^{K,K}$ & $Z^{K,\infty}$ & $Z^{\infty}$ & $r_{\text{min}}$ & $\rho$ & pre	 \\ 
			\hline\hline 
			2v1 & 21 & $10^{-15}$ &9.0362e-25 &  5.2028e-18 & 0.3645 & 1.0928e-06 & 0.0036& 8.2322e-18 & 2.0 & $2^7$\\ 
			\hline							
			3v1 & 15  &   $10^{-16}$ & 3.9273e-66 & 2.1901e-19 & 2.2162e-04 & 1.0741e-06 & 1.0988e-07 & 2.1906e-19  & 2.0 & $2^8$\\
			\hline
			4v1 & 15 & $10^{-16}$ & 7.3335e-65 &  1.4920e-19 &  0.0125 & 1.5686e-07 & 4.0452e-07 & 1.5108e-19 & 2.0 & $2^8$ \\
			\hline
			5v1 & 15 &  $10^{-16}$ & 2.2511e-65 & 1.8554e-19 & 0.0063 & 3.0698e-07  & 2.0087e-07 & 1.8671e-19 & 2.0 & $2^8$ \\
			\hline							
			6v1 & 15 &  $10^{-16}$ & 2.8444e-65 & 1.9278e-19 &0.0119 & 3.5151e-07 & 1.8776e-07 &1.9510e-19 & 2.0 & $2^8$\\
			\hline
			7v1 & 15 &  $10^{-16}$ & 1.9636e-64 & 	1.0690e-19 & 0.1864 & 7.6479e-08 & 5.7263e-07 & 1.3138e-19 & 2.0 & $2^8$\\
			\hline
8v1 & 15 &  $10^{-17}$ &2.6605e-64 & 9.6733e-20 & 0.0358 & 8.8323e-08 & 5.6743e-07 & 1.0032e-19 & 2.0 & $2^8$\\
			\hline
			9v1 & 15 &  $10^{-17}$ & 7.0522e-64 & 9.5075e-19  & 0.1794 & 8.3621e-08& 1.0638e-06 & 1.1586e-19  & 2.0 & $2^8$\\
			\hline				
			10v1 & 15 & $10^{-17}$ & 1.8038e-64 & 	1.4605e-19  &  	0.0408 & 1.9816e-07 & 4.3566e-07 & 1.5225e-19 & 2.0 & $2^8$\\
			\hline
		\end{tabular}
		}
	\end{center}
	\caption{\textbf{Data associated with the proof of Theorem~\ref{thm:main}, fixed point.} The finite dimensional projection used for the proof was $\Pi^{2K}$. We report in this table the values obtained for each part of the bounds $Y = Y^K + Y^\infty$ and $Z = Z^{K,K} + Z^{K,\infty} + Z^\infty$ described in Section~\ref{sec:bounds} for the validation of the fixed point (rounded to 4 digits for readability). The value of $r_{min}$ is the smallest error bound provided by Theorem~\ref{th:fixed_point} for the fixed point, i.e. $\frac{Y}{1-Z}$. }
	\label{table:rstarandbounds}
\end{table}

\begin{table}[h!]
	\begin{center}
		\begin{tabular}{| c | c | c | c | c | c | c | c | } 
			\hline
			$id$ & $Y^K$ & $Y^\infty$ & $Z_1^{K,K}$ & $Z_1^{K,\infty}$ & $Z_1^{\infty}$ & $Z_2$ & $r_{\text{min}}$ 	 \\ 
			\hline\hline 
			2v1 & 5.5828e-24 & 9.9824e-17 & 1.7886e-24 & 	2.2226e-06 & 7.5739e-04 & 6.3889 & 9.9900e-17 \\ 
			\hline							
			3v1 & 1.8385e-65	& 7.0423e-19& 6.5072e-66 &9.0997e-06 &3.8589e-14& 5.4928 & 7.0425e-19 \\
			\hline
			
			4v1 & 2.3428e-64 & 2.3490e-19 & 8.0858e-65 & 5.9755e-06 & 1.0491e-17 & 5.1405 & 
2.3490e-19\\
			\hline
			
			5v1 & 1.1164e-64 & 8.0609e-19&	4.0457e-65   & 5.9397e-06  & 1.1398e-17& 5.0968 & 8.0610e-19\\
			\hline							
			6v1 &1.5148e-64 & 1.0564e-18 &	5.5709e-65 & 6.5367e-06 & 1.5722e-17 & 5.2540 &1.0564e-18\\
			\hline
			
			7v1 & 7.0589e-64 & 1.8395e-19 & 2.4611e-64 & 4.3135e-06 & 1.1089e-17  & 5.0100 & 
1.8395e-19 \\
			\hline
			
8v1 & 1.0406e-63 &1.6828e-19 & 3.5045e-64 &	5.5632e-06 & 1.3523e-17 & 5.0586 & 1.6828e-19\\
			\hline
			
			9v1 & 2.6109e-63 & 1.3863e-19   &8.7070e-64 & 9.0446e-06  & 1.1629e-17 & 5.0177 & 1.3863e-19  \\
			\hline	
						
			10v1 &  1.0626e-63 & 6.6486e-19 & 4.0818e-64  & 9.0336e-06 & 1.7825e-17 & 5.1753 & 6.6487e-19       \\
			\hline
		\end{tabular}
	\end{center}
	\caption{\textbf{Data associated with the proof of Theorem~\ref{thm:main}, eigenvalue problem.} We report in this table the values obtained for each part of the bounds $Y = Y^K + Y^\infty$, $Z_1 = Z_1^{K,K} + Z_1^{K,\infty} + Z_1^\infty$ and $Z_2$ described in Section~\ref{sec:eigenvalue} for the validation of the eigenpair of the fixed point (rounded to 4 digits for readability). The value of $r_{min}$ is the smallest error bound provided by Theorem~\ref{th:fixed_point} in that case, i.e. $\frac{1-Z_1-\sqrt{(1-Z_1)^2-4YZ_2}}{2Z_2}$. }
	\label{table:boundsEV}
\end{table}	

%
%\begin{table}[h!]
%	\begin{center}
%		\begin{tabular}{| c | c | c | c | c | c | c | c | c |} 
%			\hline
%			$id$ & $K$ &  $r_*$  & $Y^K$ & $Y^\infty$ & $Z^{K,K}$ & $Z^{K,\infty}$ & $Z^{\infty}$ & $r_{\text{min}}$	 \\ 
%			\hline\hline 
%			2v1 & 680 & $10^{-470}$ & 3.0443e-821 & 7.446e-482 & 2.0812e-59 & 1.511e-98 & 9.866e-113 & 7.446e-482 \\ 
%			\hline							
%		\end{tabular}
%	\end{center}
%	\caption{\textbf{Data associated with the proof of Theorem~\ref{thm:m2}, fixed point.} The finite dimensional projection used for the proof was $\Pi^{2K}$. We report in this table the values obtained for each part of the bounds $Y = Y^K + Y^\infty$ and $Z = Z^{K,K} + Z^{K,\infty} + Z^\infty$ described in Section~\ref{sec:bounds} for the validation of the fixed point (rounded to 4 digits for readability). The value of $r_{min}$ is the smallest error bound provided by Theorem~\ref{th:fixed_point} for the fixed point, i.e. $\frac{Y}{1-Z}$. }
%	\label{table:rstarandbounds_m2}
%\end{table}	
%

\begin{table}[h!]
	\begin{center}
	\scalebox{0.9}{
		\begin{tabular}{| c | c | c | c | c | c | c | c | c |} 
			\hline
			$id$ & $K$ &  $r_*$  & $Y^K$ & $Y^\infty$ & $Z^{K,K}$ & $Z^{K,\infty}$ & $Z^{\infty}$ & $r_{\text{min}}$	 \\ 
			\hline\hline 
			2v1 & 680 & $10^{-480}$ & 4.001e-803 & 4.788e-503 & 2.400e-51 & 5.893e-103 & 1.016e-117 & 4.789e-503 \\ 
			\hline							
		\end{tabular}
		}
	\end{center}
	\caption{\textbf{Data associated with the proof of Theorem~\ref{thm:m2}, fixed point.} The finite dimensional projection used for the proof was $\Pi^{2K}$. We report in this table the values obtained for each part of the bounds $Y = Y^K + Y^\infty$ and $Z = Z^{K,K} + Z^{K,\infty} + Z^\infty$ described in Section~\ref{sec:bounds} for the validation of the fixed point (rounded to 4 digits for readability). The value of $r_{min}$ is the smallest error bound provided by Theorem~\ref{th:fixed_point} for the fixed point, i.e. $\frac{Y}{1-Z}$. }
	\label{table:rstarandbounds_m2}
\end{table}	

%
%\begin{table}[h!]
%	\begin{center}
%		\begin{tabular}{| c | c | c | c | c | c | c | c |} 
%			\hline
%			$id$ & $Y^K$ & $Y^\infty$ & $Z_1^{K,K}$ & $Z_1^{K,\infty}$ & $Z_1^{\infty}$ & $Z_2$ & $r_{\text{min}}$	 \\ 
%			\hline\hline 
%			2v1 & 1.657e-820 & 3.314e-480 & 4.4943e-821 & 3.7613e-98 & 2.113e-113 & 6.389 & 3.314e-480 \\ 
%			\hline							
%		\end{tabular}
%	\end{center}
%	\caption{\textbf{Data associated with the proof of Theorem~\ref{thm:m2}, eigenvalue problem.} We report in this table the values obtained for each part of the bounds $Y = Y^K + Y^\infty$, $Z_1 = Z_1^{K,K} + Z_1^{K,\infty} + Z_1^\infty$ and $Z_2$ described in Section~\ref{sec:eigenvalue} for the validation of the eigenpair of the fixed point (rounded to 4 digits for readability). The value of $r_{min}$ is the smallest error bound provided by Theorem~\ref{th:fixed_point} in that case, i.e. $\frac{1-Z_1-\sqrt{(1-Z_1)^2-4YZ_2}}{2Z_2}$. }
%	\label{table:boundsEV_m2}
%\end{table}	

\begin{table}[h!]
	\begin{center}
		\begin{tabular}{| c | c | c | c | c | c | c | c |} 
			\hline
			$id$ & $Y^K$ & $Y^\infty$ & $Z_1^{K,K}$ & $Z_1^{K,\infty}$ & $Z_1^{\infty}$ & $Z_2$ & $r_{\text{min}}$	 \\ 
			\hline\hline 
			2v1 & 2.318e-802 & 2.007e-501 & 6.273e-803& 1.470e-102 & 2.176e-118 & 6.389 & 2.007e-501 \\ 
			\hline							
		\end{tabular}
	\end{center}
	\caption{\textbf{Data associated with the proof of Theorem~\ref{thm:m2}, eigenvalue problem.} We report in this table the values obtained for each part of the bounds $Y = Y^K + Y^\infty$, $Z_1 = Z_1^{K,K} + Z_1^{K,\infty} + Z_1^\infty$ and $Z_2$ described in Section~\ref{sec:eigenvalue} for the validation of the eigenpair of the fixed point (rounded to 4 digits for readability). The value of $r_{min}$ is the smallest error bound provided by Theorem~\ref{th:fixed_point} in that case, i.e. $\frac{1-Z_1-\sqrt{(1-Z_1)^2-4YZ_2}}{2Z_2}$. }
	\label{table:boundsEV_m2}
\end{table}	

\begin{table}[h!]
	\begin{center}
	\scalebox{0.85}{
		\begin{tabular}{| c | c | c | c | c | c | c | c | c |c|c|} 
			\hline
			$id$ & $K$ &  $r_*$  & $Y^K$ & $Y^\infty$ & $Z^{K,K}$ & $Z^{K,\infty}$ & $Z^{\infty}$ & $r_{\text{min}}$ & $\rho$ &	pre \\ 
			\hline\hline 
5v2 & 15  & e-16 &9.6038e-65 & 1.2344e-19& 0.0424& 1.0493e-07  & 4.7803e-07 & 1.2890e-19 & 2.0 & $2^8$	\\ \hline

5v3 & 	15  & e-17 & 1.1297e-63 & 1.2313e-19& 0.0762 & 1.3712e-07& 1.8146e-06& 1.3329e-19 & 2.0 & $2^8$	\\ \hline		

6v2 & 	20 & e-18 & 7.4045e-62 & 2.6503e-20 &0.3375 & 2.2102e-09& 
6.1533e-10 & 4.0004e-20 & 2.0 & $2^8$	\\ \hline	
			
6v3 & 	20 &e-19 & 6.1106e-61  & 3.2968e-26 & 0.6772 & 1.0213e-10& 1.2525e-09& 1.0213e-25 & 2.0 & $2^8$	\\ \hline	
			
7v2 & 	15 & e-16 & 1.3907e-64 & 1.0403e-19 & 0.1229 & 1.0228e-07 & 
4.6997e-07 & 1.1860e-19 & 2.0 & $2^8$	\\ \hline	
			
7v3 & 	15  & e-17 & 8.4308e-64 & 1.0696e-19 & 0.1318 & 8.5794e-08& 1.4121e-06 & 1.2319e-19	 & 2.0 & $2^8$\\ \hline		

7v4 & 	40 & e-32 & 3.2529e-50 & 7.9649e-44 & 0.0068 & 1.7355e-20 & 2.7443e-23 & 8.0187e-44 & 1.9 & $2^8$	\\ \hline	
			
7v5 & 	40 & e-32 & 4.7632e-50 & 2.6981e-44 & 0.0118 &	1.8649e-20 & 3.4074e-23 & 2.7302e-44 & 1.9 & $2^8$	\\ \hline	
			
8v2 & 	40 & e-32 & 5.4945e-49 & 3.8926e-34 & 0.0903 & 7.5798e-22 & 	8.2687e-22 & 4.2787e-43 & 2 & $2^8$	\\ \hline	
			
8v3 & 	40 & e-32 & 2.8942e-48 & 3.6926e-43 & 0.4798 & 1.2745e-20 & 2.1043e-21 & 7.0974e-43	& 2 & $2^8$\\ \hline	
			
9v2 & 	40 & e-32 & 1.0590e-48 & 1.3697e-42 & 0.3569 & 3.3272e-22 & 1.2413e-21 & 2.1298e-42 & 2 & $2^8$	\\ \hline		
		
10v2 & 15 & e-17 & 	5.9468e-64  & 9.5371e-20  & 0.1836 & 1.0190e-07 & 8.2397e-07 & 1.1682e-19  & 2 & $2^8$	\\ \hline	
					
10v3 & 40 & e-32 & 9.3033e-51   &5.4053e-45 & 0.0022 & 5.9712e-19 & 1.2769e-23 & 5.4172e-45 & 1.9 & $2^8$	\\ \hline		
		\end{tabular}
		}
	\end{center}
	\caption{\textbf{Data associated with the proof of Theorem~\ref{thm:extra}, fixed point.} Each line in this table corresponds to the same line in Table~\ref{table:thm_extrafixedpoints}. The finite dimensional projection used for the proof was $\Pi^{2K}$. We report in this table the values obtained for each part of the bounds $Y = Y^K + Y^\infty$ and $Z = Z^{K,K} + Z^{K,\infty} + Z^\infty$ described in Section~\ref{sec:bounds} for the validation of the fixed point (rounded to 4 digits for readability). The value of $r_{min}$ is the smallest error bound provided by Theorem~\ref{th:fixed_point} for the fixed point, i.e. $\frac{Y}{1-Z}$. }
	\label{table:rstarandbounds_extra}
\end{table}	

\begin{table}[h!]
	\begin{center}
		\begin{tabular}{| c | c | c | c | c | c | c | c |} 
			\hline
			$id$ & $Y^K$ & $Y^\infty$ & $Z_1^{K,K}$ & $Z_1^{K,\infty}$ & $Z_1^{\infty}$ & $Z_2$ & $r_{\text{min}}$	 \\ 
			\hline\hline 
5v2 & 3.5065e-64 & 1.9947e-19  & 1.2467e-64& 	4.6617e-06  & 1.0958e-17 & 5.0625 & 1.9947e-19 \\ \hline	

5v3 & 3.4301e-63 & 1.4066e-19 & 1.1552e-63 & 2.1629e-05 & 1.0062e-17& 5.0354& 1.4066e-19 \\ \hline

6v2 & 4.1114e-61 & 1.2911e-20 & 1.5742e-61 & 3.2050e-08 & 3.4725e-16 & 6.9025 & 1.2911e-20 \\ \hline

6v3 &2.0046e-60 & 3.7573e-26 & 6.8773e-61 & 1.1497e-08 & 1.0012e-23 & 5.0081 & 3.7573e-26\\ \hline

7v2 & 	5.8229e-64 & 2.0308e-19 & 2.0155e-64 & 4.8508e-06 & 1.1771e-17 & 5.0825 & 2.0308e-19	\\ \hline	
	
7v3 & 	2.7652e-63
 & 1.2522e-19 & 9.4234e-64 &1.0919e-05 &1.1864e-17 & 5.0045 & 1.2522e-19	\\ \hline

7v4 & 	1.0338e-49 & 4.9154e-39 & 3.5308e-50 & 3.2986e-18 & 8.4375e-48 & 4.6150 & 4.9154e-39	\\ \hline

7v5 & 1.4800e-49 & 7.0750e-39 & 5.0415e-50 & 4.2623e-18 & 8.0265e-48 & 4.6195 & 7.0750e-39	\\ \hline

8v2 &2.0165e-48 & 1.1082e-37 & 6.8708e-49 & 6.6485e-20   & 8.5136e-48 & 	5.0976 & 1.1082e-37  \\ \hline

8v3 & 	1.1591e-47 & 6.4142e-37 & 4.0856e-48& 4.4501e-19& 7.1949e-47 & 7.3998  & 6.4142e-37	\\ \hline	

9v2 & 3.6398e-48& 1.9934e-37 & 1.2118e-48  & 4.7326e-20& 9.3407e-48    & 	5.0019  &  1.9934e-37	\\ \hline

10v2 & 2.3912e-63
 & 	1.7105e-19  & 8.2058e-64 & 9.2160e-06 & 1.5440e-17
  & 5.0465 & 1.7105e-19   	\\ \hline

10v3 & 	5.1302e-50 & 2.4690e-35 & 1.9782e-50 & 2.2818e-17  & 3.1944e-31 & 	5.7564 & 2.4690e-39  \\ \hline
		\end{tabular}
	\end{center}
	\caption{\textbf{Data associated with the proof of Theorem~\ref{thm:extra}, eigenvalue problem.} Each line in this table corresponds to the same line in Table~\ref{table:thm_extrafixedpoints}. We report in this table the values obtained for each part of the bounds $Y = Y^K + Y^\infty$, $Z_1 = Z_1^{K,K} + Z_1^{K,\infty} + Z_1^\infty$ and $Z_2$ described in Section~\ref{sec:eigenvalue} for the validation of the eigenpair of the fixed point (rounded to 4 digits for readability). The value of $r_{min}$ is the smallest error bound provided by Theorem~\ref{th:fixed_point} in that case, i.e. $\frac{1-Z_1-\sqrt{(1-Z_1)^2-4YZ_2}}{2Z_2}$. }
	\label{table:boundsEV_extra}
\end{table}

\begin{table}[h!]
	\begin{center}
		\begin{tabular}{| c | c |c| } 
			\hline
			$id$ & Kneading seq. & $M_{S_1f}$\\ 
			\hline\hline 
			2v1 & $( \text{R} )^\infty$ & -25.9364\\ 
			\hline						
			3v1 & $( \text{R L} )^\infty$ & -35.6810\\
			\hline
			4v1 & $( \text{R L L} )^\infty$ & --43.9803\\
			\hline
			5v1 & $( \text{R L R R} )^\infty$ & -31.6453\\
			\hline
			5v2 & $( \text{R L L R} )^\infty$ & -41.2675\\
			\hline	
			5v3 & $( \text{R L L L} )^\infty$ & -46.9408\\
			\hline					
			6v1 &$( \text{R L R R R} )^\infty$ & -25.9669\\
			\hline
			6v2 &$( \text{R L L R L} )^\infty$ & -43.9558\\
			\hline
			6v3 &$( \text{R L L R R} )^\infty$ & -43.6270\\
			\hline
			7v1 & $( \text{R L R R L R} )^\infty$ & -33.6712\\
			\hline
			7v2 & $( \text{R L R R R R} )^\infty$ & -29.5153\\ 
			\hline
			7v3 & $( \text{R L L R L R} )^\infty$ & -40.2905\\ 
			\hline
			7v4 & $( \text{R L L R R R} )^\infty$ & -42.5877\\ 
			\hline
			7v5 & $( \text{R L L R R L} )^\infty$ & -44.5056\\ 
			\hline
			8v1 & $( \text{R L R R R R R} )^\infty$ & -27.5505\\
			\hline
			8v2 & $( \text{R L R R L R R} )^\infty$ & -34.7314\\ 
			\hline
			8v3 & $( \text{R L L L R L L} )^\infty$ & -56.4426\\ 
			\hline
			9v1 & $( \text{R L R R R R L R} )^\infty$ & -30.4991\\ 
			\hline	
			9v2 & $( \text{R L R R L R L R} )^\infty$ & -32.9212\\ 
			\hline				
			10v1 & $( \text{R L R R R L R L R} )^\infty$ & -24.6491\\    
			\hline
			10v2 & $( \text{R L R R R R R L R} )^\infty$ &-26.8891\\ 
			\hline
			10v3 & $( \text{R L R R L R L R R} )^\infty$ & -33.9222\\   
			\hline
		\end{tabular}
	\end{center}
	\caption{\textbf{Kneading sequences for the fixed points:} 
		The powers/products here are with respect to 
		the star product for kneading sequences. See for example
		\cite{coullet1978iterations,de2011combination} for precise definitions.
		The superscript indicates the infinite star product of the sequence in parenthesis. That is, 
		if $A$ is a kneading sequence, then 
		$A^\infty = \lim_{n \to \infty} A^{*n}$,
		where $A^{*n} = A * \ldots * A$ is the iterated star product of the sequence $A$ with itself $n$ times.  
		Note that the kneading sequences given in the table are not periodic, due to the definition 
		of the star operator.  For example $R*R = RLR$ while $R*R*R = R*(RLR) = RLRRRLR$
		etcetera.  That is, this is just a convenient notation for expressing an a periodic sequences.  
		The reported values are not validated, as they are simply computed by following the orbit 
		of $0$ for the numerically computed fixed point $f_m$ and extracting the appropriate combinatorial
		sequence $A$ from the orbit data.
		%\mb{Typo with the first case?}
	}	\label{table:kneading_seq}		
\end{table}

%%%%%%%%%%%%%%%%%%%%%%%%%%%%%%%%%%%%%%%%%%%%%%%%%%%%%%%%%%%%%%%%%%%%				
%The following fixed points are identified as $id=(m,d)_index$:			

\begin{table}[h!]
	\begin{center}
	\scalebox{0.9}{
		\begin{tabular}{| c | c | c | c | c | c | c | c | c | c|} 
			\hline
			$id$ & $\rho$& $K$ &  $r_*$  & $Y^K$ & $Y^\infty$ & $Z^{K,K}$ & $Z^{K,\infty}$ & $Z^{\infty}$ & $r_{\text{min}}$	 \\ 
			\hline\hline 
			$(2,4)$ & 1.197 & 100 & $10^{-40}$ &2.4329e-56 & 2.9855e-42 &  0.1106& 0.0702 &  0.0413 & 3.8377e-42 \\ 
			\hline\hline 
			$(3,4)$ & 1.180 & 100 & $10^{-45}$ & 3.7501e-59&  4.9779e-47 & 2.2046e-05 & 3.4681e-04 & 0.1531& 5.8801e-47 \\ 
			\hline								
		\end{tabular}
		}
	\end{center}
	\caption{\textbf{Data associated with the proof of Theorem~\ref{thm:d2}, fixed point.} The two numbers in \emph{id} denote the order $m$ and the degree $d$ of the renormalization fixed points, as defined in Remark~\ref{rem:FP_properties}. The finite dimensional projection used for the proof was $\Pi^{2K}$. We report in this table the values obtained for each part of the bounds $Y = Y^K + Y^\infty$ and $Z = Z^{K,K} + Z^{K,\infty} + Z^\infty$ described in Section~\ref{sec:bounds} for the validation of the fixed point (rounded to 4 digits for readability). The value of $r_{min}$ is the smallest error bound provided by Theorem~\ref{th:fixed_point} for the fixed point, i.e. $\frac{Y}{1-Z}$. }
	\label{table:rstarandbounds_d2}
\end{table}

\begin{table}[h!]
	\begin{center}
		\begin{tabular}{| c | c | c | c | c | c | c | c |} 
			\hline
			$id$ & $Y^K$ & $Y^\infty$ & $Z_1^{K,K}$ & $Z_1^{K,\infty}$ & $Z_1^{\infty}$ & $Z_2$ & $r_{\text{min}}$	 \\ 
			\hline\hline 
			$(2,4)$ & 1.0460e-53 & 3.2151e-42 & 2.1832e-38& 0.1187& 0.0057 &  3.6570 & 3.6715e-42 \\ 
			\hline\hline 
			$(3,4)$ &2.9721e-58 & 2.4901e-47 & 2.9269e-38& 0.0011 & 0.0018 & 3.3107 & 2.4973e-47\\ 
			\hline									
		\end{tabular}
	\end{center}
	\caption{\textbf{Data associated with the proof of Theorem~\ref{thm:d2}, eigenvalue problem.} The two numbers in \emph{id} denote the order $m$ and the degree $d$ of the renormalization fixed points, as defined in Remark~\ref{rem:FP_properties}. We report in this table the values obtained for each part of the bounds $Y = Y^K + Y^\infty$, $Z_1 = Z_1^{K,K} + Z_1^{K,\infty} + Z_1^\infty$ and $Z_2$ described in Section~\ref{sec:eigenvalue} for the validation of the eigenpair of the fixed point (rounded to 4 digits for readability). The value of $r_{min}$ is the smallest error bound provided by Theorem~\ref{th:fixed_point} in that case, i.e. $\frac{1-Z_1-\sqrt{(1-Z_1)^2-4YZ_2}}{2Z_2}$. }
	\label{table:boundsEV_d2}
\end{table}

%%%%%%%%%%%%%%%%%%%%%%%%%%%%%%%%%%%%%%%%%%%%%%%%%%%%%%%%%%%%%%
\clearpage
%\bibliographystyle{abbrv}
%\bibliography{../../../../../Bibfile/bibfile}			
\bibliographystyle{abbrv}
\bibliography{references_renor}

%%%%%%%%%%%%%%%%%%%%%%%%%%%%%%%%%%%%%%%%%%%%%%%%%%%%%%%%%%%%%%%%
					%%%%%%%%%%%%%%%%%%%%%%%%%%%%%%%%%%%%%%%%%%%%%%%%%%			
					%%%%%%%%%%%%%%%%%%%%%%%%%%%%%%%%%%%%%%%%%%%%%%%%%%
					%%%%%%%%%%%%%%%%%%%%%%%%%%%%%%%%%%%%%%%%%%%%%%%%%%%%%%%%%%

\end{document}